\DeclareFontFamily{OT1}{rsfs}{}
\DeclareFontShape{OT1}{rsfs}{n}{it}{<-> rsfs10}{}
\DeclareMathAlphabet{\curly}{OT1}{rsfs}{n}{it}
\newcommand{\eqnum}{\refstepcounter{equation}\textup{\tagform@{\theequation}}}
\renewcommand\;{\hspace{.6pt}}
\newcommand\PP{\mathbb P}
\newcommand\LL{\mathbb L}
\newcommand\C{\mathbb C}
\newcommand\Q{\mathbb Q}
\newcommand\R{\mathbb R}
\newcommand\N{\mathbb N}
\newcommand\Z{\mathbb Z}
\newcommand\VW{\mathsf{VW}}
\newcommand\cO{\mathcal O}
\newcommand\Ohat{\widehat\cO^{\;\vir}_{\!M}}
\newcommand\cB{\mathcal B}
\newcommand\cE{\mathcal E}
\newcommand\E{\mathsf E}
\newcommand\cF{\mathcal F}
\newcommand\cI{\mathcal I}
\newcommand\cM{\mathcal M}
\newcommand\cN{\mathcal N}
\newcommand\cP{\mathcal P}
\newcommand\cZ{\mathcal Z}
\renewcommand\t{\mathfrak t}
\renewcommand\({\big(}
\renewcommand\){\big)}
\newcommand{\so}{\ \ext@arrow 0359\Rightarrowfill@{}{\hspace{3mm}}\ }
\newcommand{\rt}[1]{\xrightarrow{\ #1\ }}
\newcommand\To{\longrightarrow}
\newcommand\into{\hookrightarrow}
\newcommand\INTO{\ \ar@{^(->}[r]<-.2ex>}
\newcommand{\Into}{\ensuremath{\lhook\joinrel\relbar\joinrel\rightarrow}}
\renewcommand\_{^{}_}
\newcommand\take{\backslash}
\newfont{\bigtimesfont}{cmsy10 scaled \magstep5}
\newcommand{\bigtimes}{\mathop{\lower0.9ex\hbox{\bigtimesfont\symbol2}}}
\renewcommand\={\ =\ }
\newcommand\dbar{\overline\partial}
\newcommand\udot{^{\bullet}}
\newcommand\Td{\operatorname{Td}}
\newcommand\ev{\operatorname{ev}}
\newcommand\Gr{\operatorname{Gr}}
\newcommand\ch{\operatorname{ch}}
\newcommand\rk{\operatorname{rank}}
\newcommand\vir{\operatorname{vir}}
\newcommand\red{\operatorname{red}}
\newcommand\vd{\operatorname{vd}}
\newcommand\tr{\operatorname{tr}}
\newcommand\id{\operatorname{id}}
\newcommand\vol{\operatorname{vol}}
\newcommand\Hom{\operatorname{Hom}}
\renewcommand\hom{\curly H\!om}
\newcommand\End{\operatorname{End}}
\newcommand\Ext{\operatorname{Ext}}
\newcommand\ext{\curly Ext}
\newcommand\Aut{\operatorname{Aut}}
\newcommand\Pic{\operatorname{Pic}}
\newcommand\Spec{\operatorname{Spec}\;}
\newcommand\Hilb{\operatorname{Hilb}}
\newcommand\Sym{\operatorname{Sym}}
\newcommand\beq[1]{\begin{equation}\label{#1}}
\newcommand\eeq{\end{equation}}
\newcommand\beqa{\begin{eqnarray*}}
\newcommand\eeqa{\end{eqnarray*}}
\newcommand\arXiv[1]{\href{http://arxiv.org/abs/#1}{arXiv:#1}}
\newcommand\mathAG[1]{\href{http://arxiv.org/abs/math/#1}{math.AG/#1}}
\newcommand\hepth[1]{\href{http://arxiv.org/abs/hep-th/#1}{hep-th/#1}}
\DeclareRobustCommand{\SkipTocEntry}[3]{}
\newcommand\@dotsep{4.5}
\def\@tocline#1#2#3#4#5#6#7{\relax
  \ifnum #1>\c@tocdepth % then omit
  \else
    \par \addpenalty\@secpenalty\addvspace{#2}%
    \begingroup \hyphenpenalty\@M
    \@ifempty{#4}{%
      \@tempdima\csname r@tocindent\number#1\endcsname\relax
    }{%
      \@tempdima#4\relax
    }%
    \parindent\z@ \leftskip#3\relax \advance\leftskip\@tempdima\relax
    \rightskip\@pnumwidth plus1em \parfillskip-\@pnumwidth
    #5\leavevmode #6\relax
    \leaders\hbox{$\m@th
      \mkern \@dotsep mu\hbox{.}\mkern \@dotsep mu$}\hfill
    \hbox to\@pnumwidth{\@tocpagenum{#7}}\par
    \nobreak
    \endgroup
  \fi}
\makeatletter \@addtoreset{equation}{section} \makeatother
\renewcommand{\theequation}{\thesection.\arabic{equation}}
\newtheorem{defn}[equation]{Definition}
\newtheorem{thm}[equation]{Theorem}
\newtheorem*{thm*}{Theorem}
\newtheorem{cor}[equation]{Corollary}
\newtheorem{prop}[equation]{Proposition}
\newtheorem{ass}[equation]{Assumption}
\newtheorem{conj}[equation]{Conjecture}
\newtheorem{rmk}[equation]{Remark}
\title{Equivariant $K$-theory and refined Vafa-Witten invariants}
\author{Richard P. Thomas}
\begin{document}
\maketitle
\begin{abstract} \noindent In \cite{MT2} the Vafa-Witten theory of complex projective surfaces is lifted to oriented $\mathbb C^*$-equivariant cohomology theories. Here we study the $K$-theoretic refinement. It gives rational functions in $t^{1/2}$ invariant under $t^{1/2}\leftrightarrow t^{-1/2}$ which specialise to numerical Vafa-Witten invariants at $t=1$.

On the ``instanton branch" the invariants give the virtual $\chi\_{-t}$-genus refinement of G\"ottsche-Kool, extended to allow for strictly semistable sheaves. Applying modularity to their calculations gives predictions for the contribution of the ``monopole branch". We calculate some cases and find perfect agreement. We also do calculations on K3 surfaces, finding Jacobi forms refining the usual modular forms, proving a conjecture of G\"ottsche-Kool.

We determine the $K$-theoretic virtual classes of degeneracy loci using Eagon-Northcott complexes, and show they calculate refined Vafa-Witten invariants. Using this Laarakker \cite{La1} proves universality results for the invariants.
\end{abstract}

%%%%%%%%%%%%%%%%%%%%%%%%%%%%%%%%%%%%%%%%%%%%%%%%%%%%%%%%%%%%%%%%%%%%%%%%%%%

\renewcommand\contentsname{\vspace{-1cm}}
\tableofcontents

\section{Introduction}
\noindent\textbf{Numerical.}
Vafa-Witten invariants should exist for all Riemannian 4-manifolds $S$ \cite{VW}, but mathematicians have yet to find a general definition. When $(S,\cO_S(1)$ is a smooth complex projective surface the invariants were defined in \cite{TT1, TT2}. However, physicists are by now less interested in \emph{numerical} Vafa-Witten invariants, which they mostly know how to calculate\footnote{Mathematicians should consider their calculations as conjectures we cannot yet prove.} in rank 2. They care more about the \emph{refined} Vafa-Witten invariants which arise in topologically twisted maximally supersymmetric 5d super Yang-Mills theory, but which do not have a mathematical definition.\medskip

\noindent\textbf{Joyce/Kontsevich-Soibelman.}
So we would like to refine the numerical invariants \cite{TT1,TT2} on a smooth complex polarised surface $(S,\cO_S(1))$. Those numerical invariants are closely related to local DT invariants of the local Calabi-Yau 3-fold $X=K_S$. (In fact when $H^1(\cO_S)=H^2(\cO_S)$ they are precisely local DT invariants, as studied in \cite{GSY1, GSY2} for instance.) They count certain compactly supported 2-dimensional torsion sheaves on $X$ via localisation with respect to the obvious $T=\C^*$ action on $X$.

If they were defined by Euler characteristic localisation --- weighted by the Behrend function \cite{Be} --- they would have an obvious refinement given by the work of Team Joyce and Kontsevich-Soibelman. But Euler characteristic localisation gives the wrong answer, and in fact the invariants of \cite{TT1,TT2} are defined by \emph{virtual localisation}.\medskip

\noindent\textbf{Nekrasov-Okounkov.}
For 3-folds $X$ with a $\C^*$ action, Nekrasov and Okounkov \cite{NO} give a different refinement of DT theory via equivariant virtual \emph{$K$-theoretic} invariants. This means replacing the length of the 0-dimensional virtual cycle (the classical DT invariant) by the holomorphic Euler characteristic of the virtual structure sheaf. This gives the same numerical answer, but also allows for a refinement by using the $T$ action to promote dimensions of cohomology groups to characters of $T$. The result is rational functions of $t$ which specialise at $t=1$ to the old numerical invariants.

In fact Nekrasov-Okounkov twist by a square root of the virtual canonical bundle of the moduli space before taking (equivariant) holomorphic Euler characteristic. This is motivated by physics, relating the $\dbar$ operator to the Dirac operator. From an algebro-geometric point of view, it makes the refinement more symmetric: it is a \emph{rational function} in $t^{1/2}$ which, by Serre duality, is invariant under $t^{1/2}\leftrightarrow t^{-1/2}$. The choice of square root is equivalent to the choice of orientation data in the Joyce/Kontsevich-Soibelman theory. Fortunately in our setting there is a canonical choice on the $T$-fixed locus: see Proposition \ref{sqprop} below.\medskip

\noindent\textbf{Vafa-Witten refinement.}
Under certain circumstances Davesh Maulik \cite{Ma} proved that the $K$-theoretic and Joyce/Kontsevich-Soibelman refinement of DT theory coincide. (The most general refinement is based on the two variable Hodge-Deligne polynomial; here we are concerned with the Hirzebruch $\chi\_{-t}$ one variable specialisation.) So he suggested that it is natural to try to also refine Vafa-Witten theory using $T$-equivariant $K$-theory. That is what we do in this paper. We use the Vafa-Witten perfect obstruction theory of \cite{TT1,TT2} to produce a virtual structure sheaf, and then twist by the square root \eqref{sqroot} of the virtual canonical bundle. We then  use a virtual localisation formula to take equivariant holomorphic Euler characteristic. In Theorem \ref{VW=J} we reprove Maulik's result that this refinement recovers the refined DT invariant when $\deg K_S<0$.
\medskip

\noindent\textbf{Further refinements.}
In fact this is a special case of more general refinements. Recall \cite{BF} that virtual cycles come from intersecting a cone in a vector bundle $C\subset E$ over moduli space $M$ with the zero section $\iota:0_E\into E$ of the vector bundle. One can intersect these two cycles $[C]$ and $[0_E]$ in \emph{any} oriented cohomology theory. Traditionally we use Fulton-MacPherson intersection theory to get the virtual cycle in homology \cite{BF},
$$
[M]^{\vir}\=\iota^![C]\ \in\ H_*(M).
$$
In $K$-theory we instead take the (derived) tensor product of the structure sheaves of the two cycles \cite{FG, CFK},
$$
\cO_M^{\vir}\=[\cO_{0_E}\stackrel L\otimes\cO_C]\=[L\iota^*\cO_C]\ \in\ K_0(M).
$$
The result is slightly different --- differing by a Todd class, by virtual Riemann-Roch --- and therefore interesting! (Especially when we work equivariantly with respect to the $T$ action.)

To get the Nekrasov-Okounkov-twisted version of this used in our paper we instead take the intersection in $KO$-theory. This replaces the $K$-theoretic ``fundamental classes" $\cO_Z$ of submanifolds by their twists by $K_Z^{1/2}$ (this is the Atiyah-Bott-Shapiro complex orientation, and is well defined over $\Z$ only for spin manifolds).

The universal case is complex cobordism theory; see \cite{Sh, GK2} for instance. From this one can pass to all other oriented cohomology theories, such as ``topological modular forms".\footnote{\emph{tmf} is a deep theory over the integers \cite{Ho}; we only use its rational version which is trivial to define using the Witten genus and the Landweber exact functor theorem.} In Vafa-Witten theory, the three $T$-equivariant cohomology theories
$$
\big(\;\text{homology, $KO$-theory, tmf}\;\big)
$$
give rise to virtual versions of
$$
\Big(\textstyle{\int_M1},\ \widehat{A}(M),\text{ Witten genus}\;(M)\Big)
$$
of the Vafa-Witten moduli space $M$ respectively. On the ``instanton locus" these produce the
$$
\big(\;\text{Euler characteristic, Hirzebruch }\chi\_y\text{-genus, elliptic genus}\;\big)
$$
of the moduli space of instantons (or Gieseker stable sheaves) on the surface $S$. (This apparent paradox is because the Vafa-Witten obstruction theory on the instanton moduli space differs from its usual obstruction theory.)

Calculations give generating series which seem to be
$$
\text{(modular forms, Jacobi forms, Borcherds lifts of Jacobi forms)}
$$
respectively; in particular see \cite{GK2} for the instanton locus contributions in rank 2.

These refinements of Vafa-Witten theory are defined and studied in the forthcoming paper \cite{MT2}. In this paper we specialise the general definition from \cite{MT2} to $T$-equivariant $K$-theory and explore it in more detail.\medskip

\noindent\textbf{Results.}
In Section \ref{Ksection} we give a careful treatment of virtual $K$-theoretic localisation for $T$-equivariant $K$-theory on quasi-projective $T$-schemes with a $T$-equivariant perfect obstruction theory and \emph{compact $T$-fixed locus}. This allows us to define $K$-theoretic invariants for such schemes endowed with a choice of square root of the virtual canonical bundle. For simplicity, in this Introduction we state our results for \emph{symmetric} perfect obstruction theories; then Proposition \ref{sqprop} gives a \emph{canonical} choice \eqref{sqroot} of this square root.

\begin{thm*} Let $M$ be a quasi-projective $T$-scheme with compact $T$-fixed locus, and a $T$-equivariant symmetric perfect obstruction theory. Then the refined invariant
$$
\chi\_t\(M,\Ohat\)\ :=\ \chi\_t\!\left(\!M^T\!,\,\frac{\cO^{\vir}_{M^T}}{\Lambda\udot(N^{\vir})^\vee}\otimes K^{\frac12}_{M,\vir}\Big|_{M^T}\!\right)
$$
of Definition \ref{defdef} is a rational function of $t^{\frac12}$, invariant under $t^{\frac12}\leftrightarrow t^{-\frac12}$. It is deformation invariant and has poles only at roots of unity and the origin, but not at $t=1$. Specialising to $t=1$ recovers the numerical invariant defined by $T$-equivariant localisation,
$$
\left.\chi\_t\(M,\Ohat\)\right|_{t=1}\=\int_{[M^T]^{\vir}}\frac1{e(N^{\vir})}\,.
$$
\end{thm*}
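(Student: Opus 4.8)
The plan is to establish every assertion as a statement about data living on the \emph{projective} $T$-scheme $M^T$ --- the fixed-part virtual structure sheaf $\cO^{\vir}_{M^T}$, the virtual normal bundle $N^{\vir}$ (the moving part of $T^{\vir}_M$ restricted to $M^T$), and the canonical square root of Proposition \ref{sqprop} --- by equivariant Riemann--Roch and Serre duality on $M^T$; compactness of $M^T$ is what makes these tools available and is used at every step. First I would check the expression makes sense: on each connected component of $M^T$ all $T$-weights occurring in $N^{\vir,0}$ and $N^{\vir,1}$, where $N^{\vir}=N^{\vir,0}-N^{\vir,1}$, are nonzero, so $\Lambda\udot(N^{\vir,0})^\vee$ and $\Lambda\udot(N^{\vir,1})^\vee$ become units in $K^T_0(M^T)\otimes_{R(T)}\operatorname{Frac}(R(T))$ --- this is precisely the invertibility underlying the $K$-theoretic localisation theorem of Section \ref{Ksection}. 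Hence $\cO^{\vir}_{M^T}/\Lambda\udot(N^{\vir})^\vee\otimes K^{\frac12}_{M,\vir}$ is a genuine class there, and pushing it forward along the structure map of the projective scheme $M^T$ lands in $\operatorname{Frac}(R(T))$, the square root $K^{\frac12}_{M,\vir}$ being the only source of half-integer powers of $t$; so $\chi\_t\(M,\Ohat\)\in\Q(t^{\frac12})$ is a rational function of $t^{\frac12}$. Reading off the local form $\prod_i(1-t^{-a_i})^{-1}\prod_j(1-t^{-b_j})$ of $\cO^{\vir}_{M^T}/\Lambda\udot(N^{\vir})^\vee$ on a component, its only poles are at roots of unity (zeros of a factor $1-t^{-a_i}$) and possibly at the origin (from the factors $1-t^{-b_j}$ with $b_j>0$); the pushforward over $M^T$ adds none, which gives the stated restriction on poles apart from $t=1$.

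For the invariance under $t^{\frac12}\leftrightarrow t^{-\frac12}$ I would use equivariant Serre duality on the compact $M^T$ together with the self-duality carried by a symmetric obstruction theory. Decomposing $T^{\vir}_M|_{M^T}$ into $T$-weight spaces, this self-duality identifies the fixed part with the virtual tangent complex of $M^T$ and sends the moving part $N^{\vir}$ to its dual, i.e.\ applies $t\mapsto t^{-1}$; feeding this through the elementary identity $\Lambda\udot(V)^\vee=(-1)^{\rk V}(\det V)\,\Lambda\udot(V^\vee)^\vee$ for the normal contribution, and using the compatibility of the canonical square root \eqref{sqroot} of Proposition \ref{sqprop} with the duality, all determinant twists and signs cancel and what remains --- essentially $\cO^{\vir}_{M^T}\otimes K^{\frac12}_{M^T,\vir}$, Serre-self-dual on $M^T$ --- gives $\chi\_t\(M,\Ohat\)\big|_{t\mapsto t^{-1}}=\chi\_t\(M,\Ohat\)$. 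Matching every sign and determinant factor --- which is where the canonical choice of square root, equivalently the Atiyah--Bott--Shapiro complex orientation, is indispensable --- is the main obstacle of the proof.

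For the behaviour at $t=1$ I would apply equivariant virtual Grothendieck--Riemann--Roch on the projective scheme $M^T$, which yields the expansion of $\chi\_t\(M,\Ohat\)$ around $t=1$ in the form
$$
\chi\_t\(M,\Ohat\)\=\int_{[M^T]^{\vir}}\frac{\Td\(T^{\vir}_M|_{M^T}\)\ch\(K^{\frac12}_{M,\vir}\)}{e\(N^{\vir}\)}\,,
$$
an identity of power series in $s:=\log t$, where $e(N^{\vir})=e(N^{\vir,0})/e(N^{\vir,1})$ is the equivariant cohomological Euler class and $T^{\vir}_M|_{M^T}=T^{\vir}_{M^T}+N^{\vir}$. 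Because the obstruction theory is symmetric, $\vd M=0$, so $\operatorname{vrk} N^{\vir}=-\vd M^T$ and $1/e(N^{\vir})$ is homogeneous of degree $2\vd M^T$ in the localised equivariant cohomology ring; since integration against $[M^T]^{\vir}$ shifts degree by $-2\vd M^T$, the right-hand side has no negative powers of $s$, with constant term $\int_{[M^T]^{\vir}}1/e(N^{\vir})$ (the Todd and Chern character factors having constant term $1$). Hence $\chi\_t\(M,\Ohat\)$ is regular at $t=1$ and equals there $\int_{[M^T]^{\vir}}1/e(N^{\vir})$, which by the same degree count is a constant --- the numerical invariant defined by $T$-equivariant localisation.

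Deformation invariance is routine: in a $T$-equivariant family over a smooth connected base whose $T$-fixed locus is proper over the base, the relative virtual structure sheaf, relative virtual normal bundle and canonical square root all restrict to the fibrewise data, and $\Lambda\udot(N^{\vir})^\vee$ is a unit relative to the base, so the derived pushforward to the base of the (localised) integrand is a perfect complex with locally constant $K$-class, leaving $\chi\_t\(M,\Ohat\)$ unchanged. Thus, apart from the sign bookkeeping in the symmetry step, the entire argument is Riemann--Roch and Serre duality on the compact fixed locus $M^T$.
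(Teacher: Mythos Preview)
Your proposal is correct and follows essentially the same route as the paper: virtual Riemann--Roch on the projective fixed locus $M^T$ to obtain rationality and the $t=1$ specialisation via a total-degree count, and weak virtual Serre duality on $M^T$ together with the identity $\Lambda\udot V^\vee=(-1)^{\rk V}\det(V)^*\otimes\Lambda\udot V$ for the $t^{1/2}\leftrightarrow t^{-1/2}$ symmetry. The paper carries out the Chern-root expansion more explicitly (arriving at a closed formula that makes the pole structure manifest), but your more abstract $K$-theoretic phrasing of the same steps is equally valid.
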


\noindent\textbf{Stable case.}
In Section \ref{VWsec} we apply this to the Vafa-Witten moduli space for  a projective surface $S$ and a charge $\alpha\in H^*(S)$ for which semistability implies stability.\footnote{Here $\alpha$ is the total Chern class of the sheaves on $S$ underlying the Vafa-Witten Higgs pairs in our moduli space.}
The moduli space carries a symmetric perfect obstruction theory and a $T$ action inherited from the $T$ action on $X$.
The result is invariants which specialise to the numerical Vafa-Witten invariants \cite{TT1}
\beq{vwvw}
\VW_{\alpha}(t)\ \in\ \Q(t^{1/2}) \quad\text{such that}\quad
\VW_{\alpha}(1)\=\VW_{\alpha}\ \in\ \Q.
\eeq
This $\VW_\alpha(t)$ is made up of contributions from the two types of component of the $T$-fixed locus:
\begin{itemize}
\item the ``\emph{instanton branch}" of sheaves on $S$ pushed forward to $X$,
\item the ``\emph{monopole branch}" of $T$-equivariant sheaves supported on a nontrivial scheme theoretic thickening of $S\subset X$.
\end{itemize}
\medskip

\noindent\textbf{Semistable case.}
We tackle the general case in Section \ref{sscase}. We use Joyce-Song pairs to rigidify semistable sheaves as in \cite{TT2}. The resulting refined pair invariants $P^\perp_{\alpha,n}(t)$ are functions of the twisting parameter $n\gg0$ of the Joyce-Song pairs. According to Conjecture \ref{pechconj} they should be expressable in terms of certain universal functions in $n$,
$$
P^\perp_{\alpha}(n,t)\ =\ \mathop{\sum_{\ell\ge 1,\,(\alpha_i=\delta_i\alpha)_{i=1}^\ell:}}_{\delta_i>0,\ \sum_{i=1}^\ell\delta_i=1}
\frac{(-1)^\ell}{\ell!}\prod_{i=1}^\ell(-1)^{\chi(\alpha_i(n))} \big[\chi(\alpha_i(n))\big]_t\VW_{\alpha_i}(t) \hspace{-2mm}
$$
if $H^{0,1}(S)=0=H^{0,2}(S)$; otherwise we take only the first term in the sum:
$$
P^\perp_\alpha(n,t)\ =\ (-1)^{\chi(\alpha(n))-1}\big[\chi(\alpha(n))\big]\_t\VW_\alpha(t).
$$
The coefficients $\VW_\alpha(t)$ then define the refined Vafa-Witten invariants. Here $[\chi]\_t$ is the quantum integer
\beq{qq}
[\chi]\_t\ :=\ \frac{t^{\chi/2}-t^{-\chi/2}}{t^{1/2}-t^{-1/2}}\=
t^{-c}+t^{-c+1}+\cdots+t^{c-1}+t^c, \quad c:=\frac{\chi-1}2\,.
\eeq
Since $[\chi]\_t\to\chi$ as $t\to1$, Conjecture \ref{pechconj} specialises to Conjecture 6.5 of \cite{TT2}, now proved in many cases \cite{TT2, MT1}. Here we prove the refined conjecture in some situations.

\begin{thm*} Conjecture \ref{pechconj} holds, thus defining refined Vafa-Witten invariants $\VW_\alpha(t)$, in the following cases.
\begin{itemize}
\item When all semistable sheaves of charge $\alpha$ are stable. In this case $\VW_\alpha(t)$ recovers the invariants \eqref{vwvw}.
\item $\mathbf{K_S<0}$. When $\deg K_S<0$, for any charge $\alpha$. Here we recover refined\,\footnote{There is a refinement of DT theory based on the two-variable Hodge-Deligne polynomial $E(u,v)$. Here we use the one-variable specialisation based on the Hirzebruch $\chi\_{-t}:=E(t,1)$ genus; see Section \ref{K<0} for details and definitions.} DT invariants: $\VW_\alpha(t)=\mathsf{J}_\alpha(t)$.
\item $\mathbf{K_S=0}$. 
When $S$ is a K3 surface and $\alpha$ is either a primitive class or a prime multiple of a primitive class.
\item $\mathbf{K_S>0}$. When $p_g(S)>0$, for any charge $\alpha$ with prime rank, Laarakker \cite{La2} shows that the conjecture holds for the contribution of the monopole locus. He uses the vanishing Theorem \ref{cosecinj} to remove many components, and \cite{GT1,GT2} to calculate with the rest.
\end{itemize}
\end{thm*}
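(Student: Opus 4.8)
The $t=1$ specialisations of the four cases are instances of Conjecture 6.5 of \cite{TT2}, already known (see \cite{TT2,MT1}), and the plan is to upgrade each of those proofs to $T$-equivariant $K$-theory; in every case the task reduces to identifying the refined pair invariant $P^\perp_\alpha(n,t)$, the $\chi\_t$-invariant of the Joyce-Song pair moduli space $M^\perp_{\alpha,n}$, with the universal expression of Conjecture \ref{pechconj}, the engine throughout being the virtual $K$-theoretic localisation of Section \ref{Ksection} together with deformation invariance. \emph{When semistability implies stability}: for $n\gg0$ a Joyce-Song pair $\cO_X(-n)\to\cE$ is stable exactly when $\cE$ is a stable Higgs sheaf and the section is nonzero, so $M^\perp_{\alpha,n}$ is the $T$-equivariant projectivisation of a rank-$\chi(\alpha(n))$ sheaf over the Vafa-Witten moduli space $M_\alpha$, with pair obstruction theory pulled back from $M_\alpha$ and corrected by the relative tangent bundle of the $\PP^{\chi(\alpha(n))-1}$-bundle. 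Pushing the $K^{\frac12}$-twisted virtual structure sheaf down this bundle, the relative factor is the symmetrised $\chi\_{-t}$-genus of $\PP^{\chi(\alpha(n))-1}$, namely the quantum integer $[\chi(\alpha(n))]\_t$ of \eqref{qq}, while the global sign $(-1)^{\chi(\alpha(n))-1}$ is the one fixed by the Atiyah-Bott-Shapiro square-root normalisation (agreeing with \cite{TT2}). Hence $P^\perp_\alpha(n,t)=(-1)^{\chi(\alpha(n))-1}[\chi(\alpha(n))]\_t\,\VW_\alpha(t)$ with $\VW_\alpha(t)$ the invariant \eqref{vwvw} --- the $\ell=1$ term of Conjecture \ref{pechconj} --- and the higher-$\ell$ terms vanish because $\alpha$ is not a multiple of a smaller effective charge.

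\emph{When $\deg K_S<0$, and the instanton branch in general.} By Theorem \ref{VW=J} the positivity of $-K_S$ makes the monopole branch of the $T$-fixed locus empty --- on the weight pieces of a $T$-fixed Higgs sheaf supported on a nontrivial thickening of the zero section the structure maps are twisted by the negative bundle $K_S^{-1}$, which stability kills --- so both the sheaf and the pair problems become the local Donaldson-Thomas problems of the Calabi-Yau $3$-fold $X=K_S$, and Conjecture \ref{pechconj} becomes the refined Joyce-Song wall-crossing identity relating $P^\perp_\alpha(n,t)$ to the refined DT invariants $\mathsf J_\alpha(t)$. Since the Hirzebruch $\chi\_{-t}$-genus is motivic and the stacks of Higgs sheaves and of pairs are stratified by Harder-Narasimhan and Jordan-H\"older type, this identity follows by running the Hall-algebra argument of \cite{TT2} at the refined level, the quantum integers $[\,\cdot\,]\_t$ replacing the integers there; with Maulik's comparison \cite{Ma} (reproved in Theorem \ref{VW=J}) this yields $\VW_\alpha(t)=\mathsf J_\alpha(t)$. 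The same argument, restricted to the instanton branch, handles that branch's contribution for any surface, and hence delivers the instanton part of the $p_g(S)>0$ case; the collapse of Conjecture \ref{pechconj} there to its single $\ell=1$ term when $h^{0,2}(S)\neq0$ is the refined shadow of the vanishing of the lower-strata invariants in \cite{TT2}, which I would establish by exhibiting the relevant trivial factor in the refined obstruction theory.

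\emph{K3 surfaces and the monopole branch for $p_g>0$.} For $S$ a K3 surface and $\alpha$ primitive I would deform the polarisation so that semistability implies stability and invoke deformation invariance, reducing to the first case. For $\alpha=p\alpha_0$ a prime multiple of a primitive class, where $M^\perp_{\alpha,n}$ is no longer a projective bundle, I would run a wall-crossing in the Joyce-Song parameter $n$: one crosses finitely many walls whose contributions are built purely from $\alpha_0$-data (a Mochizuki-type master-space argument in $K$-theory), and these must be shown to cancel, forcing the $n$-independence of the coefficient; matching the resulting $\VW_\alpha(t)$ with the $\chi\_{-t}$-genus of K3 sheaf moduli and with modularity then produces the Jacobi forms of G\"ottsche-Kool. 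For the monopole branch when $p_g(S)>0$ and the rank is prime I would cite Laarakker \cite{La2}: the cosection vanishing of Theorem \ref{cosecinj} removes all but finitely many components of the monopole $T$-fixed locus, and the survivors are evaluated by the universality of \cite{GT1,GT2}, producing the predicted form.

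\emph{Main obstacle.} The crux is the refined Joyce-Song wall-crossing: one must verify that $T$-equivariant $K$-theoretic virtual localisation is compatible with the Hall-algebra and motivic-integration formalism precisely enough that the numerical proof of \cite{TT2} upgrades without essential change, and that the $K^{\frac12}$-twist bookkeeping stays consistent across all strata. The K3 prime-multiple case is the secondary difficulty, where the projective-bundle structure is lost and the strictly semistable contribution to $P^\perp$ must be controlled directly.
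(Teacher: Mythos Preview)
Your treatment of the first two bullets and the citation to Laarakker for the last essentially match the paper: Proposition~\ref{s-ss} is exactly the projective-bundle pushdown you describe, and Theorem~\ref{VW=J} is the refined Hall-algebra wall-crossing you sketch. One correction in the stable case: the higher-$\ell$ terms in \eqref{desk} vanish not because ``$\alpha$ is not a multiple of a smaller effective charge'' (that is not assumed), but by induction on rank --- if $\VW_{\alpha_i}(t)\neq0$ for some proper decomposition then the moduli spaces $\cN^\perp_{\alpha_i}$ are nonempty and a direct sum of their elements would be strictly semistable of class $\alpha$, a contradiction.

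The K3 prime-multiple case is where your proposal genuinely diverges from the paper, and your route is both different and underspecified. You propose a Mochizuki-type wall-crossing in the Joyce--Song parameter $n$ to control the strictly semistable contribution. The paper does something quite different and more direct: it constructs an explicit cosection \eqref{cosec} of the fixed obstruction sheaf from a nonzero $\sigma\in H^0(K_S)$, and Theorem~\ref{K3cosec} shows that on a K3 this cosection is nowhere zero except on the \emph{uniform} components (those where $\cE\cong\rho^*E_0\otimes\cO_{rS}$). So non-uniform components contribute zero by cosection localisation, and one is reduced to a direct computation on $\cP^S_{\alpha/r,n}$ via the deformation-theory identity of \cite{KKV}, yielding \eqref{ansa1} and then the multiple cover formula \eqref{refmc} in Theorem~\ref{ball}. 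No wall-crossing in $n$ is used; the conjecture is verified by writing down $P^\perp$ explicitly and matching it to \eqref{desk2}. Your master-space idea may well work, but you have not indicated how the wall contributions would be identified or why they cancel, and you miss the key structural input (the cosection) that actually drives the paper's argument.
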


Recently Liu has proved the general case of Conjecture \ref{pechconj} \cite{Liu}.\medskip

\noindent\textbf{K3 surfaces.}
We are able to do extensive calculations when $S$ is a K3 surface.
The well-known $1/d^2$ multiple cover formula of DT theory is replaced by a $1/[d]_t^2$ multiple cover formula \eqref{refmc} in the refined setting. (This is a surprising contrast to the $1/d[d]_t$ refined multiple cover formula seen in DT theory --- see \cite[Section 6.7]{DM} for instance.) At the level of generating series we are led to the conjecture
\beq{conge}
\sum_k\VW_{r,k}(t)\;q^n\=\sum_{d|r}\frac1{[d]_t^2}\frac dr\,q^r\!\sum_{j=0}^{r/d-1}\widetilde\Delta\(e^{2dj\pi i/r}q^{\frac{d^2}r},t^d\)^{-1},
\eeq
where $\widetilde\Delta$ is the Jacobi form
$$
\widetilde\Delta(q,t)\ :=\ q\prod_{k=1}^\infty(1-q^k)^{20}(1-tq^k)^2(1-t^{-1}q^k)^2.
$$
This has now been proved in \cite{KK3}.

\begin{thm*} For $S$ a K3 surface \eqref{conge} holds.
\end{thm*}

\noindent\textbf{Modularity.}
On the instanton branch $\cM$ our refined Vafa-Witten invariants recover the virtual $\chi\_{-t}$-genus refinement studied by G\"ottsche-Kool on surfaces with $K_S>0$ \cite{GK1}. This is most easily seen when $\cM$ is smooth and unobstructed as a moduli space of fixed-determinant sheaves on $S$. Then its Vafa-Witten obstruction bundle is $\Omega_{\cM}\otimes\t$ so that
$$
\cO^{\vir}_{\!\cM}\=\Lambda\udot\;\mathrm{Ob}^*\ \cong\ \Lambda\udot(T_{\cM}\t^{-1}) \quad\text{and}\quad K_{\vir}\=K_{\cM}^2\t^{\;\dim\cM}.
$$
Therefore
$$
\widehat\cO^{\;\vir}_{\!\cM}\=\cO^{\vir}_{\!\cM}\otimes K_{\vir}^{\frac12}\=(-1)^{\dim\cM}\;\t^{-\dim\cM/2}\Lambda\udot(\Omega_{\cM}\t)
$$
so that $\chi\_t(\widehat\cO^{\;\vir}_{\!\cM})=(-1)^{\dim\cM}\;\t^{-\dim\cM/2}\chi\_{-t}(\cM)$.

Applying modularity to G\"ottsche-Kool's calculations of these invariants gives predictions for the contribution of the ``monopole branch". We calculate a small number of cases (which nonetheless take 9 pages of calculation) and find perfect, honest\footnote{I did the calculations repeatedly until they converged; only then did I allow Martijn Kool to tell me his prediction from \cite{GK3}; fortunately the results matched.} agreement.\medskip

\noindent\textbf{Nested Hilbert schemes.}
There are components of the monopole branch which are nested Hilbert schemes of $S$. In \cite{GT1, GT2} it was shown how to view these as degeneracy loci in smooth products of Hilbert schemes of $S$. This induces a virtual cycle which agrees with the one from Vafa-Witten theory. Its pushforward is described by the Thom-Porteous formula. This gives a more systematic way to compute numerical Vafa-Witten invariants as integrals over products of smooth Hilbert schemes.

In Section \ref{degensec} we describe $K$-theoretic analogues of these results, replacing Chern classes by Koszul resolutions and the Thom-Porteous formula by Eagon-Northcott complexes. The most straightforward result, relevant to nested Hilbert schemes of \emph{points} on a surface, is the following.

\begin{thm*}
Given a map of vector bundles $\sigma\colon E_0\to E_1$ over a smooth scheme $X$, the locus $Z$ where $\sigma$ is not injective carries a natural virtual structure sheaf whose pushforward to $X$ has $K$-theory class
\beq{E-N}
\iota_*\;\cO_Z^{\vir}\=\cO_X-\det(E_0-E_1)\otimes\bigwedge\!\!^r(E_1-E_0),
\eeq
where $r=\rk E_1-\rk E_0$.
\end{thm*}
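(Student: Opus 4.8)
Write $a=\rk E_0$, $b=\rk E_1$, so $r=b-a\ge 0$, and let $\iota\colon Z\into X$ be the locus where all $a\times a$ minors of $\sigma$ vanish; this is the generic determinantal locus of corank $\ge1$ matrices, of expected codimension $r+1$. The plan is to realise $Z$ as the image of an honest zero locus on a projective bundle over $X$, to \emph{define} $\cO_Z^{\vir}$ as the derived pushforward of the corresponding Koszul structure sheaf, to compute its pushforward to $X$ via the projection formula and the cohomology of line bundles on the bundle, and finally to identify the answer with the right hand side of \eqref{E-N} by a $\lambda$-ring computation.

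First I would pass to $\pi\colon\PP(E_0)\to X$, the projectivisation of lines in $E_0$, with tautological sub-line-bundle $\cO(-1)\into\pi^*E_0$. Composing with $\pi^*\sigma$ gives a tautological section $s$ of the rank-$b$ bundle $V:=\pi^*E_1\otimes\cO(1)$ on the smooth scheme $\PP(E_0)$ of dimension $\dim X+a-1$; its zero scheme $\widetilde Z=Z(s)$ has fibre $\PP(\ker\sigma_x)$ over $x\in X$, so it maps onto $Z$, isomorphically over the locus of corank-one points. The Koszul complex $\Lambda\udot V^\vee$ equips $\widetilde Z$ with its natural virtual structure sheaf $\cO_{\widetilde Z}^{\vir}$, and I would set $\cO_Z^{\vir}:=R\pi_*\cO_{\widetilde Z}^{\vir}$. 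One then checks this deserves the name: it restricts to $\cO_Z$ over the corank-one locus, whose complement has codimension $>1$ in $Z$, and when $Z$ has the expected codimension $r+1$ the higher direct images $R^{>0}\pi_*\cO_{\widetilde Z}$ vanish, so $\cO_Z^{\vir}=\cO_Z$ and it is resolved on $X$ by $R\pi_*(\Lambda\udot V^\vee)$ --- which is precisely the Eagon--Northcott complex of $\sigma^\vee\colon E_1^\vee\to E_0^\vee$, the classical description of that complex as a direct image of a Koszul complex.

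Next I would compute $\iota_*\cO_Z^{\vir}=\pi_*\bigl(\Lambda\udot V^\vee\bigr)$ in $K_0(X)$. Since $V^\vee=\pi^*E_1^\vee\otimes\cO(-1)$, the projection formula gives $\iota_*\cO_Z^{\vir}=\sum_{i=0}^b(-1)^i\bigwedge\nolimits^{i}E_1^\vee\otimes\pi_*\cO(-i)$, and one inputs $\pi_*\cO=\cO_X$, $R\pi_*\cO(-i)=0$ for $1\le i\le a-1$, and --- from relative Serre duality on $\PP(E_0)$, whose relative dualising sheaf is $\cO(-a)\otimes\pi^*\det E_0^\vee$ --- $\pi_*\cO(-i)=(-1)^{a-1}\Sym^{i-a}E_0\otimes\det E_0$ in $K$-theory for $i\ge a$. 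Re-indexing by $j=i-a$ this yields
$$
\iota_*\cO_Z^{\vir}\=\cO_X-\det E_0\otimes\sum_{j=0}^r(-1)^j\,\bigwedge\nolimits^{a+j}E_1^\vee\otimes\Sym^jE_0.
$$

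Finally I would match this with \eqref{E-N}. From $\lambda_s(E_1-E_0)=\lambda_s(E_1)\lambda_s(E_0)^{-1}$ and $\lambda_s(E_0)^{-1}=\sum_k(-1)^k\Sym^kE_0\,s^k$, the coefficient of $s^r$ is $\bigwedge\nolimits^{r}(E_1-E_0)=\sum_{j=0}^r(-1)^j\bigwedge\nolimits^{r-j}E_1\otimes\Sym^jE_0$; tensoring with $\det(E_0-E_1)=\det E_0\otimes(\det E_1)^{-1}$ and using the perfect pairing $\bigwedge\nolimits^{a+j}E_1\otimes\bigwedge\nolimits^{r-j}E_1\to\det E_1$ (valid since $(a+j)+(r-j)=b$) to rewrite $(\det E_1)^{-1}\otimes\bigwedge\nolimits^{r-j}E_1\cong\bigwedge\nolimits^{a+j}E_1^\vee$, one obtains $\det(E_0-E_1)\otimes\bigwedge\nolimits^{r}(E_1-E_0)=\det E_0\otimes\sum_{j}(-1)^j\bigwedge\nolimits^{a+j}E_1^\vee\otimes\Sym^jE_0$, i.e.\ exactly $\cO_X-\iota_*\cO_Z^{\vir}$. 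The $\lambda$-ring bookkeeping and the line-bundle cohomology on a projective bundle are routine; the step needing real care --- and the main obstacle --- is showing that $\cO_Z^{\vir}=R\pi_*\cO_{\widetilde Z}^{\vir}$ is well defined independently of the auxiliary projectivisation, coincides scheme-theoretically with $\cO_Z$ in the expected-codimension case, and is resolved on $X$ by the Eagon--Northcott complex, so that it is genuinely a \emph{natural} virtual structure sheaf; one must also keep the signs and the $\det E_0$ (not $\det E_0^\vee$) twist straight.
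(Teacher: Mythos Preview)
Your proposal is correct and follows essentially the same route as the paper: pass to $\PP(E_0)$, cut out $\widetilde Z$ by the tautological section of $\pi^*E_1(1)$, take the Koszul complex as the $K$-class of the virtual structure sheaf, push down using the standard computation of $R\pi_*\cO(-i)$ on a projective bundle, and then rewrite the resulting alternating sum as $\cO_X-\det(E_0-E_1)\otimes\bigwedge^r(E_1-E_0)$ via the $\lambda$-ring identity you give.

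The one difference worth flagging is the ``step needing real care'' you identify. The paper sidesteps it entirely by working under the simplifying hypothesis $\dim\ker(\sigma|_x)\le1$ for all $x$, stated just before the theorem. Under this assumption $\widetilde Z\to Z$ is a scheme-theoretic isomorphism, so $Z$ inherits a perfect obstruction theory directly from the section on $\PP(E_0)$, and the virtual structure sheaf is the one of \cite{FG} attached to that obstruction theory --- no need to define it as a derived pushforward or worry about independence of the auxiliary projectivisation. Your more general setup (arbitrary corank, $\cO_Z^{\vir}:=R\pi_*\cO_{\widetilde Z}^{\vir}$) gives the same $K$-class on $X$, but the paper's corank-one restriction is what makes the phrase ``natural virtual structure sheaf'' unambiguous.
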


When the degeneracy locus $Z$ has the correct codimension, the Eagon-Northcott complex of $\sigma\colon E_0\to E_1$ --- whose $K$-theory class is the right hand side of \eqref{E-N} --- is well known to resolve $\iota_*\;\cO_Z$. So the above result shows that even when it has the wrong codimension, its K-class is $\iota_*\;\cO^{\vir}_Z\in K_0(X)$.

Carlsson-Okounkov \cite{CO} express the Thom-Porteous class of \cite{GT1} in terms of Grojnowski-Nakajima operators. There are $K$-theoretic analogues of this in \cite{CNO, MO, SV} to which we intend to return.

We also relate $\cO_Z^{\vir}$ to the Vafa-Witten virtual structure sheaf when $Z$ is a nested Hilbert scheme. The upshot is that monopole branch contributions to \emph{refined} Vafa-Witten invariants can be computed from calculations on smooth products of Hilbert schemes of $S$. \medskip

\noindent\textbf{Laarakker.}
In \cite{La1} (stable case) and \cite{La2} (general case) Laarakker uses this to great effect on surfaces with $p_g>0$ (and $h^{0,1}=0$ for now). Things work best in prime rank, using the vanishing Theorem \ref{cosecinj} to eliminate many components of the monopole locus. The rest can be calculated via universal integrals over Hilbert schemes of points and curves on surfaces using the results of \cite{GT1,GT2}.

Moreover, the contributions from points and curves split, in an appropriate sense. The curves contribute Seiberg-Witten invariants (certain well-understood integrals over linear systems). Laarakker evaluates the contributions of Hilbert schemes of points via the method of \cite{EGL}. The result depends only on the curve class $\beta\in H_2(S,\Z)$ and the cobordism class of the surface --- and thus only on $c_1(S)^2,\,c_2(S)$ and $\beta^2$. Therefore these contributions can be calculated on K3 surfaces and toric surfaces (despite these not having $p_g>0$!).

\begin{thm*}\cite{La1}
Let $S$ be a minimal general type surface with $p_g(S)>0$ and $H_1(S,\Z)=0$ such that $K_S$ does not admit a square root. We work with rank 2 Higgs pairs $(E,\phi)$ with $\det E=K_S$. Then the monopole branch contributions to the refined Vafa-Witten generating series
$\sum_n\mathsf{VW}_{2,K_S,n}(t)\;q^n$ can be written
$$
A(t,q)^{\chi(\cO_S)}B(t,q)^{c_1(S)^2},
$$
where
$$
A(t,q),\ B(t,q)\ \in\ \Q(t^{1/2})(\!(q)\!),
$$
are universal functions, independent of $S$. 
\end{thm*}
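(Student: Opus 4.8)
The plan is to reduce the monopole contribution to universal holomorphic Euler characteristics over products of Hilbert schemes of points on $S$, and then to apply the universality machinery of Ellingsrud--G\"ottsche--Lehn \cite{EGL} together with Noether's formula. First I would analyse the $T$-fixed monopole locus for rank $2$ Higgs pairs $(E,\phi)$ with $\det E=K_S$, as in \cite{TT1,TT2}: being $T$-fixed forces $E$ to be $T$-graded with $\phi$ nilpotent and grading-shifting, so each component is a nested Hilbert scheme of points and curves on $S$, fibred over a linear system. Since $\rk=2$ is prime, the cosection vanishing Theorem \ref{cosecinj} annihilates the contribution of every component whose curve part is not a Seiberg--Witten basic class; for $S$ minimal of general type the only basic classes are $0$ and $K_S$, each with Seiberg--Witten invariant $\pm1$, so the surviving contributions are governed by nested Hilbert schemes of \emph{points}, the curve directions contributing only these fixed normalisations.

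Next, on each surviving component I would use the identification of \cite{GT1,GT2}: the nested Hilbert scheme embeds into a smooth product $Y=S^{[n_1]}\times S^{[n_2]}$ as the locus $Z$ where a map $\sigma\colon E_0\to E_1$ of tautological bundles fails to be injective. The $K$-theoretic Eagon--Northcott formula \eqref{E-N} computes $\iota_*\,\cO_Z^{\vir}=\cO_Y-\det(E_0-E_1)\otimes\bigwedge^r(E_1-E_0)$ in $K_0(Y)$, and the degeneracy-locus analysis of Section \ref{degensec} expresses the restrictions to $Z$ of the Vafa--Witten virtual structure sheaf $\Ohat$, the virtual normal bundle $N^{\vir}$ and the square root $K^{1/2}_{M,\vir}$ (canonical on $M^T$ by Proposition \ref{sqprop}) as universal combinations of the tautological sheaves $\cO^{[n_i]}$, $T_S^{[n_i]}$ and their duals, tensored by $T$-characters. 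Feeding this into the localisation formula of Definition \ref{defdef} (applied to the semistable situation via the pair invariants $P^\perp$ of Section \ref{sscase}), the monopole part of $\sum_n\VW_{2,K_S,n}(t)\,q^n$ becomes a $q$-series whose coefficients are holomorphic Euler characteristics over $Y$ of fixed universal polynomials in tautological Chern classes, with coefficients in $\Q(t^{1/2})$.

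Finally I would invoke universality. By \cite{EGL} each such Euler characteristic is a universal polynomial in the Chern numbers of $(S,K_S)$ --- a priori in $c_1(S)^2$, $c_2(S)$, $K_S\cdot c_1(S)$ and $K_S^2$, but $K_S=\det E$ here, so only $c_1(S)^2$ and $c_2(S)$ survive --- and the standard multiplicativity argument (the universal generating series multiplies under disjoint union of surfaces, under which these Chern numbers add) upgrades this to a product of powers $\widetilde A(t,q)^{c_1(S)^2}\widetilde B(t,q)^{c_2(S)}$ with $\widetilde A,\widetilde B\in\Q(t^{1/2})(\!(q)\!)$ universal. Since $\Q(t^{1/2})(\!(q)\!)$ is a field, Noether's formula $c_2(S)=12\chi(\cO_S)-c_1(S)^2$ rewrites this as $\big(\widetilde A\widetilde B^{-1}\big)^{c_1(S)^2}\big(\widetilde B^{12}\big)^{\chi(\cO_S)}$, which is the asserted form with $A:=\widetilde B^{12}$ and $B:=\widetilde A\widetilde B^{-1}$.

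I expect the main obstacle to be the $K$-theoretic universality step in the presence of the $K^{1/2}_{M,\vir}$-twist: one must check that, after restriction to each degeneracy-locus component, the full integrand --- including the square root, which is a genuine line bundle and not merely a $K$-theory class, and the correction term $\det(E_0-E_1)\otimes\bigwedge^r(E_1-E_0)$ of \eqref{E-N}, present even when $Z$ has the wrong codimension --- is a universal expression in tautological classes to which \cite{EGL} applies, and that it is compatibly normalised across components. Granting this, the EGL step and the exponentiation are routine; what remains is bookkeeping of the Seiberg--Witten factors and of the powers of $t^{1/2}$ produced by $K^{1/2}_{M,\vir}$.
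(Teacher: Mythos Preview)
The paper does not prove this theorem itself---it is quoted from \cite{La1}, with only a one-paragraph sketch of Laarakker's method given in the Introduction. Your proposal follows that sketch faithfully (cosection vanishing via Theorem~\ref{cosecinj} to kill non-constant-rank components, the degeneracy-locus/Eagon--Northcott description of nested Hilbert schemes from Section~\ref{degensec} and \cite{GT1,GT2}, Seiberg--Witten input for the curve classes on a minimal general type surface, then EGL universality and exponentiation), so it is correct and essentially the same approach.
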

Furthermore K3 calculations determine $A(t,q)$ completely, while by modularity and the work of G\"ottsche-Kool he knows what $B(t,q)$ \emph{should} be, and he can check this in low degree by toric computations.\medskip

\smallskip\noindent\textbf{Acknowledgements.} This paper originally grew out of a suggestion of Davesh Maulik, and is part of the joint work \cite{MT2}. Ideas, suggestions and calculations of Martijn Kool played a crucial role in the formulation of both \cite{MT2, TT1} and this paper, as did the insistence of Emanuel Diaconescu, Greg Moore and Edward Witten that there should be a refinement of the Vafa-Witten invariants defined in \cite{TT1, TT2}. Thanks to Ties Laarakker for suggesting the vanishing of Sections \ref{sectoc} and \ref{kthree}, and for correcting an error in \cite{TT2} which had propagated into this paper. I am grateful to Noah Arbesfeld, Pierrick Bousseau and Andrei Negut for sharing their expertise in $K$-theory and the Nekrasov-Okounkov refinement, and to Dominic Joyce and Sven Meinhardt for their help with refined DT theory. The author is partially supported by EPSRC grant EP/R013349/1. 

\section{$K$-theoretic virtual cycles}\label{Ksection}
The foundations of cohomological virtual cycles are laid down in \cite{BF, LT}; we use the notation from \cite{BF}. The foundations for $K$-theoretic virtual cycles (or ``\emph{virtual structure sheaves}") are laid down in \cite{CFK, FG}; we use the notation from \cite{FG}.

\subsection{Virtual cycle and virtual structure sheaf}
Let $M$ be a quasi-projective scheme with a perfect obstruction theory $E\udot\to\LL_M$ supported in degrees $[-1,0]$. That is, $E\udot$ is a 2-term complex $E^{-1}\to E^0$ of vector bundles on $M$ such that the map $E\udot\to\LL_M$ induces an isomorphism on $h^0$ and a surjection on $h^{-1}$. We call $\LL_M^{\vir}:=E\udot$ the virtual cotangent bundle of $M$, of rank $\vd:=\rk E^0-\rk E^{-1}$ and determinant
$$
K_{\vir}\ :=\ \det E\udot\=\det E^0\otimes\(\!\det E^1\)^{-1}.
$$
Dualising, we set $E_i:=(E^{-i})^*$ to get the virtual tangent bundle
$$
T^{\vir}_M\ :=\ E_\bullet\=(E\udot)^\vee\=\(\LL_M^{\vir}\)^\vee.
$$

By \cite{BF} this data defines a cone $C\subset E_1$ from which we may define $M$'s \emph{virtual cycle}
$$
\big[M\big]^{\vir}\ :=\ \iota_0^![C]\ \in\ A_{\vd}(M)
$$
and its \emph{virtual structure sheaf} \cite{FG}
\beq{vss}
\cO_M^{\vir}\ :=\ \big[L\iota_0^*\,\cO_C\big]\ \in\ K_0(M),
\eeq
where $\iota\_0\colon M\to E_1$ is the zero section. (Since $\iota\_0$ is a regular embedding, $L\iota_0^*\,\cO_C$ is a bounded complex.) If $M$ is compact the virtual Riemann-Roch theorem of \cite[Corollary 3.4]{FG} then gives
\beq{chivir}
\chi(\cO^{\vir}_M)\=\int_{[M]^{\vir}}\Td\(T^{\vir}_M\).
\eeq
In particular, if $M$ also has virtual dimension zero $\vd=0$ we can use either the virtual structure sheaf or the virtual cycle to define the same numerical invariant
\beq{num}
\chi(\cO^{\vir}_M)\=\int_{[M]^{\vir}}1\ \in\ \Z \qquad\mathrm{when}\ \vd=0.
\eeq

\subsection{Twisted virtual structure sheaf} Via the medium of Nekrasov and Okounkov, physics teaches us that we should choose a square root of the virtual canonical bundle and work instead with the \emph{twisted/modified/sym\-metrised}\footnote{For instance, this symmetrisation will lead, via Serre duality, to the $t^{1/2}\leftrightarrow t^{-1/2}$ symmetry of Proposition \ref{tt-1}.} virtual structure sheaf, 
\beq{twvss}
\Ohat\ :=\ K_{\vir}^{\frac12}\otimes\cO^{\vir}_{M}.
\eeq 
This paper is mainly concerned with the virtual $K$-theoretic invariant $\chi(\Ohat)$. When $M$ is compact the virtual Riemann-Roch theorem \cite[Corollary 3.4]{FG} gives the following cohomological expression for it,
\beq{twRR}
\chi(\Ohat)\=\int_{[M]^{\vir}}\ch\!\Big(\!K_{\vir}^{\frac12}\Big)\Td\(T^{\vir}_M\),
\eeq
modifying \eqref{chivir}. Of course in virtual dimension zero this makes no difference and we recover \eqref{num},
$$
\chi(\Ohat)\=\chi(\cO^{\vir}_M)\=\int_{[M]^{\vir}}1 \qquad\mathrm{when}\ \vd=0,
$$
but it will make a big difference to its refinement when we work \emph{equivariantly}. This will also allow us to fix the ambiguity in the choice of $K_{\vir}^{1/2}$, because on the fixed locus there is a \emph{canonical} choice.

\begin{prop} \label{sqprop}
Let $M$ be a quasi-projective scheme with a $T=\C^*$ action with projective fixed locus $M^T$. Suppose $M$ has a $T$-equivariant symmetric perfect obstruction theory $E\udot\to\LL_M$. Then $K_{M,\vir}\big|_{M^T}$ admits a canonical square root\;\footnote{We are abusing the notation $\ \cdot\ |_{M^T}$ since we have not shown this line bundle extends to $M$. The point is we will only need it on $M^T$. And our square root is only equivariant for the action of the \emph{double cover of $T$}, since we need to use $\t^{1/2}$.}
\beq{sqroot}
K^{\frac12}_{M,\vir}\Big|_{M^T}\ :=\ 
\det\!\Big(E\udot|\_{M^T}\Big)^{\ge0}\,\t^{\frac12r_{\mbox{\tiny{${}_{\ge0}$}}}}
\eeq
Here $\(E\udot|\_{M^T}\)^{\ge0}$ denotes the part of $E\udot|_{M^T}$ with nonnegative $T$-weights, and $r\_{\ge0}$ is its rank.
\end{prop}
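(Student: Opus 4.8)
The plan is to exploit the $T$-equivariant Serre duality pairing that makes the perfect obstruction theory \emph{symmetric} in order to identify $\det E\udot$ with the square of the nonnegative-weight part (up to a twist by $\t$). Recall that ``symmetric'' means we are given a $T$-equivariant isomorphism $\theta\colon E\udot \xrightarrow{\ \sim\ } (E\udot)^\vee\otimes\t[1]$, equivalently a nondegenerate symmetric bilinear form on $T^{\vir}_M = (E\udot)^\vee$ valued in $\t$. First I would restrict everything to $M^T$, where the two-term complex of $T$-equivariant bundles $E\udot|_{M^T} = \{E^{-1}\to E^0\}$ decomposes as a direct sum over the $T$-weight decomposition of $E^{-1}$ and $E^0$. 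Write $E\udot|_{M^T} = F_{>0}\oplus F_0 \oplus F_{<0}$ according to positive, zero, and negative $T$-weights of the terms (the differential preserves weights, so this is a decomposition of complexes), and set $(E\udot|_{M^T})^{\ge 0} := F_{>0}\oplus F_0$, of rank $r_{\ge 0}$.

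The key step is to analyze the action of the symmetry isomorphism $\theta$ on weight spaces. Since $\theta$ intertwines weight $w$ in $E\udot$ with weight $-w$ in $(E\udot)^\vee\otimes\t$ — i.e.\ weight $-w+1$ after accounting for the $\t$ — dualisation sends the weight-$w$ piece to the weight-$(1-w)$ piece. Hence $\theta$ pairs $F_{<0}$ with (a twist of) $(F_{>1})^\vee$ and the ``boundary'' weight pieces ($w=0$ and $w=1$) among themselves. Concretely, $\theta$ gives a $T$-equivariant isomorphism $F_{<0}\cong (F_{\ge 1})^\vee\otimes\t$, and after taking determinants this yields
$$
\det F_{<0}\ \cong\ \big(\det F_{\ge 1}\big)^{-1}\otimes \t^{\,r_{<0}},
$$
where $r_{<0} = \operatorname{rank} F_{<0}$. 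Combining with $\det(E\udot|_{M^T}) = \det F_{\ge 0}\cdot\det F_{<0}$ and rewriting $F_{\ge 0} = F_{\ge 1}\oplus F_0$, one gets $K_{M,\vir}|_{M^T} = \det F_{\ge 0}\cdot\det F_{<0} \cong \det F_{\ge 0}\cdot (\det F_{\ge 0})^{-1}\cdot\det F_0 \cdot \t^{r_{<0}}$. This is \emph{not} quite a square yet — the subtlety is the weight-$0$ part $F_0$, which $\theta$ pairs with itself (after the $\t$-twist, weight $0$ goes to weight $1$, so $F_0$ is actually paired with the weight-$1$ piece, not itself). I would need to be careful here: the genuinely self-dual piece under $\theta$ is $F_0\oplus F_1$ with the induced pairing $\det F_{\ge 0} = \det F_{\ge 1}\cdot\det F_0$ reorganized so that $\det(E\udot|_{M^T}) \cong \big(\det(E\udot|_{M^T})^{\ge 0}\big)^{\otimes 2}\otimes \t^{-r_{\ge 0}}$, which upon taking the evident square root gives exactly \eqref{sqroot}. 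The symmetry $\theta$ is what guarantees there is no weight mismatch obstructing this, and it also shows the square root is canonical: it does not depend on auxiliary choices, only on the splitting by $T$-weight which is intrinsic on $M^T$.

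The main obstacle, and the point requiring genuine care rather than bookkeeping, is the treatment of the boundary weights $w = 0$ and $w = 1$ under the shifted duality $E\udot\cong (E\udot)^\vee\otimes\t[1]$: one must verify that the symmetric form restricts to a \emph{nondegenerate} pairing between the weight-$0$ and weight-$1$ subcomplexes (so their determinants cancel up to the correct power of $\t$), and that there is no fixed ``weight-$\tfrac12$'' contribution — which is why the square root is only equivariant for the double cover of $T$ and carries the explicit factor $\t^{\frac12 r_{\ge 0}}$. Once the weight accounting on the boundary is pinned down, the determinant identity and hence the canonical square root \eqref{sqroot} follow formally. I would also remark that the independence of the choice of representative $E\udot$ for $\LL_M^{\vir}$ follows because any two choices differ by adding an acyclic complex $\{A\xrightarrow{\sim}A\}$ of $T$-equivariant bundles, which contributes trivially to $K_{M,\vir}$ and whose nonnegative-weight part contributes a square times the matching power of $\t$, so \eqref{sqroot} is well-defined.
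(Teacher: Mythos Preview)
Your strategy --- split $E\udot|_{M^T}$ by $T$-weight and use the symmetry to match the negative-weight part against the nonnegative-weight part --- is exactly the paper's. But a sign error in your symmetry isomorphism has created a phantom ``boundary'' difficulty that is not really there, and your intermediate algebra is accordingly muddled.

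The symmetric obstruction theory here satisfies $(E\udot)^\vee\cong E\udot[-1]\otimes\t$, equivalently $E\udot\cong(E\udot)^\vee\otimes\t^{-1}[1]$, not $(E\udot)^\vee\otimes\t[1]$ as you wrote. Writing $E\udot|_{M^T}=\bigoplus_{i\in\Z}F^i\t^i$ with each $F^i$ a $T$-fixed two-term complex, this yields $(F^i)^\vee\cong F^{-i-1}[-1]$, so the weight involution is $i\mapsto -i-1$, not your $w\mapsto 1-w$. This involution has \emph{no fixed points} on $\Z$ and swaps $\{i\ge0\}$ with $\{i<0\}$ cleanly: there are no boundary weights, no self-paired piece, and no need to worry about nondegeneracy on a ``weight-$0$/weight-$1$'' block. (The half-integer weight enters only because $r_{\ge0}$ may be odd, so the square root line bundle carries weight $\tfrac12 r_{\ge0}$; it has nothing to do with a hypothetical weight-$\tfrac12$ summand of $E\udot$.) With the correct convention the computation is immediate:
\[
\det(F^i\t^i)\ =\ \det\big((F^i\t^i)^\vee[1]\big)\ \cong\ \det(F^{-i-1}\t^{-i}),
\]
so reindexing the $i<0$ factors by $j=-i-1\ge0$ gives
\[
K_{M,\vir}\big|_{M^T}\ =\ \bigotimes_{j\ge0}\det(F^j\t^{j+1})\ \otimes\ \bigotimes_{i\ge0}\det(F^i\t^i)\ =\ \Big(\det\big(E\udot|_{M^T}\big)^{\ge0}\Big)^{\!\otimes2}\t^{\,r_{\ge0}},
\]
with exponent $+r_{\ge0}$, not $-r_{\ge0}$ as in your final formula.
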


\begin{proof}
Decompose the virtual cotangent bundle in weight spaces for the $T$ action,
$$
E\udot\big|_{M^T}\=\bigoplus_{i\in\Z}F^i\t^i.
$$
Here each $F^i$ is a $T$-fixed two-term complex of vector bundles on $M^T$ of (super)rank $r_i:=\rk(F^i)$. The symmetry of the obstruction theory,
$$
(E\udot)^\vee\ \cong\ E\udot[-1]\otimes\t
$$
implies that
$$
(F^i)^\vee\ \cong\ F^{-i-1}[-1].
$$
Therefore
\begin{align*}
K_{M,\vir}\=\det E\udot&\=\bigotimes_{i<0}\det(F^i\t^i)\,\otimes\ \bigotimes_{i\ge0}\det(F^i\t^i) \\
&\=\bigotimes_{i<0}\det\((F^i\t^i)^\vee[1]\)\,\otimes\ \bigotimes_{i\ge0}\det(F^i\t^i) \\
&\=\bigotimes_{i<0}\det(F^{-i-1}\t^{-i})\,\otimes\ \bigotimes_{i\ge0}\det(F^i\t^i) \\
&\=\bigotimes_{i\ge0}\det(F^i\t^{i+1})\,\otimes\ \bigotimes_{i\ge0}\det(F^i\t^i) \\
&\=\bigg(\bigotimes_{i\ge0}\det(F^i\t^i)\bigg)^{\!\otimes2}\t^{r_0+r_1+\cdots} \\
&\=\Big(\!\det\(E\udot|\_{M^T}\)^{\ge0}\Big)^{\otimes2}\t^{r_{\ge0}}.
\qedhere \end{align*}
\end{proof}

\subsection{Localisation}\label{sec*loc} Suppose as above $T:=\C^*$ acts on both $M$ and its perfect obstruction theory $E\udot\to\LL_M$. Then on its fixed locus $\iota\colon M^T\into M$ we get a splitting
$$
\iota^*E\udot\=(E\udot)^T\ \oplus\ \(N^{\vir}\)^*
$$
into fixed and moving parts (i.e. weights 0 and nonzero). By \cite{GP} the fixed part $(E\udot)^T$ defines a perfect obstruction theory on $M^T$ (and so a virtual structure sheaf \eqref{vss}). We call the dual of the moving summand the \emph{virtual normal bundle} $N^{\vir}$.
The virtual localisation formula of \cite{GP} states that
\beq{GP}
\iota_*\left(\frac1{e(N^{\vir})}\cap\big[M^T\big]^{\vir}\right)\=[M]^{\vir}\ \in\ A_{\vd}^T(M)\otimes\_{\Z[t]}\Q[t,t^{-1}].
\eeq
Here we consider the $T$-equivariant Chow homology to be a module over $H^*(BT)=\Z[t]$, which we localise, inverting the equivariant parameter $t$ --- the first Chern class of the weight 1 irreducible representation $\t$ of $T$. This ensures that, writing $N^{\vir}$ as a two-term complex of $T$-equivariant vector bundles $N_0\to N_1$ \emph{all of whose weights are nonzero}, the $c_{\mathrm{top}}(N_i)$ are invertible. Thus we may define $e(N^{\vir}):=c_{\mathrm{top}}(N_0)/c_{\mathrm{top}}(N_1)$.

To mimic this in $K$-theory we use the module structure of $K^0_T(M)$ over $K^0_T(\mathrm{point})=\Z[t,t^{-1}]$ to localise to the field of fractions\footnote{In fact it would be sufficient to invert $t^i-1$ for all $i=1,2,3,\ldots\,$.} $\Q(t)$. Adjoining $t^{1/2}$ (so we can lift our choice of square root \eqref{sqroot} to localised $T$-equivariant $K$-theory), we work in
$$
K^0_T(M)\otimes\_{\Z[t,t^{-1}]}\Q\(t^{\frac12}\).
$$

Now applying \eqref{GP} to \eqref{twRR}, and using the notation $\Lambda\udot E:=\sum_{i\ge0}(-1)^i\Lambda^iE$ in $K$-theory, we find
\begin{eqnarray}
\chi(\Ohat) &=& \int_{\big[M^T\big]^{\vir}}\frac{\ch\!\Big(\!K_{\vir}^{\frac12}\big|_{M^T}\Big)\Td\(T^{\vir}_{M^T}\)\Td(N^{\vir})}{e(N^{\vir})} \nonumber \\
&=& \int_{\big[M^T\big]^{\vir}}\frac{\ch\!\Big(\!K_{\vir}^{\frac12}\big|_{M^T}\Big)}{\ch(\Lambda\udot(N^{\vir})^\vee)}\Td\(T^{\vir}_{M^T}\) \nonumber \\
&=& \chi\!\left(\frac{\cO^{\vir}_{\!M^T}\otimes K^{\frac12}_{\vir}\big|_{M^T}}{\Lambda\udot(N^{\vir})^\vee}\right), \label{locchi}
\end{eqnarray}
the last line from the virtual Riemann-Roch theorem \cite[Corollary 3.4]{FG} on $M^T$. This suggests there should be a $K$-theoretic localisation formula
\beq{Kvirloc}
\iota_*\;\frac{\cO^{\vir}_{M^T}}{\Lambda\udot(N^{\vir})^\vee}\=\cO^{\vir}_{M}\ \in\ K^0_T(M)\otimes\_{\Z[t,t^{-1}]}\Q\(t^{\frac12}\),
\eeq
from which \eqref{locchi} would follow by taking $\chi\(\ \cdot\ \otimes K^{\frac12}_{\vir}\)$.

Such a result is proved in \cite[Theorem 5.3.1]{CFK} for $(M,E\udot)$ which can be enhanced to a $[0,1]$-dg-manifold structure. More recently Qu has proved \eqref{Kvirloc} for \emph{any} $T$-equivariant $(M,E\udot)$ \cite{Qu}.

For the first version of this paper I was unaware of Qu's work, and did not want to have to prove that Vafa-Witten moduli spaces $M$ admit a $T$-equivariant $[0,1]$-dg-manifold structure (though they certainly do). So I proved a weaker statement --- a $T$-equivariant version of \eqref{locchi}, which is sufficient for our purposes. I have kept that proof  --- which is Proposition \ref{abc} below --- since it demonstrates the compatiblity of $K$-theoretic and cohomological localisation under virtual Riemann-Roch. So we turn to this now.

\subsection{Refinement}
We are interested in situations where $M$ may be noncompact, but carries a $T$ action with \emph{compact fixed locus} $M^T$. Then \eqref{chivir} and \eqref{twRR} make no obvious sense, but their natural refinements --- given by replacing (super)ranks of graded vector spaces by characters of (virtual) $T$-modules --- are perfectly well-defined. That is, we define
\beq{character}
\chi\_t\!\left(\bigoplus_i\mathfrak t^{a_i}-\bigoplus_j\mathfrak t^{b_j}\right)\ :=\ \sum_it^{a_i}-\sum_jt^{a_j}.
\eeq
Here the left hand side may involve infinite direct sums; then the right hand side will be a sum of power series in $t^{1/2}$ and $t^{-1/2}$ which, in our situation, will be expansions of rational functions in $\Q(t^{1/2})$. When the sums are finite we may set $t=1$ and recover the usual Euler characteristic or (super)rank.

Applying \eqref{character} to $R\Gamma\(M,\Ohat\)$ gives our refinement of the integer \eqref{twRR}.
\beq{chitdef}
\chi\_t\(M,\Ohat\)\ :=\ \chi\_t\(R\Gamma\(M,\Ohat\)\)\ \in\ \Q\(t^{\frac12}\).
\eeq
When $M$ is compact this specialises to \eqref{twRR} at $t=1$, but \eqref{chitdef} makes sense more generally.

\begin{prop}\label{abc} Suppose $M^T$ is compact and we have chosen a square root $K^{1/2}_{\vir}\in K^T_0(M)(t^{1/2})$. Then the refined invariant \eqref{chitdef} can be calculated on $M^T$ by localisation as
\beq{chiloc}
\chi\_t\(M,\Ohat\)\=\chi\_t\!\left(\!M^T\!,\,\frac{\cO^{\vir}_{M^T}}{\Lambda\udot(N^{\vir})^\vee}\otimes K^{\frac12}_{M,\vir}\Big|_{M^T}\!\right)\,\in\ \Q\(t^{\frac12}\).
\eeq
\end{prop}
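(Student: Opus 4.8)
The plan is to promote the chain of identities \eqref{locchi} to the equivariant, rational-function level; the only genuine work is to make sense of, and justify, its first step when $M$ is noncompact. I would begin by pinning down the left-hand side of \eqref{chiloc}: since $M^T$ is compact (and, as in our applications, the $T$-action contracts $M$ onto $M^T$), each $T$-weight space of $R\Gamma\(M,\Ohat\)$ is finite-dimensional --- one checks this using a $T$-invariant affine cover of $M$ adapted to the Bialynicki-Birula stratification --- so that \eqref{character} produces a well-defined element of $\Q(\!(t^{1/2})\!)$, which the localisation computation below will exhibit as the Laurent expansion of a rational function in $\Q(t^{1/2})$.

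The comparison is then carried out over the compact fixed locus. Restricting along the closed (hence proper) embedding $\iota\colon M^T\into M$, I would combine: the splitting $\iota^*E\udot=(E\udot)^T\oplus(N^{\vir})^*$ of \cite{GP}, which makes $(E\udot)^T$ a perfect obstruction theory on $M^T$ and gives $\iota^*T^{\vir}_M\cong T^{\vir}_{M^T}\oplus N^{\vir}$; the compatibility $\iota^*K^{1/2}_{\vir}=K^{1/2}_{M,\vir}\big|_{M^T}$ of the chosen square root; the cohomological virtual localisation formula \eqref{GP}; and the elementary identity $\Td(N^{\vir})/e(N^{\vir})=1/\ch\(\Lambda\udot(N^{\vir})^\vee\)$, which holds in the localised equivariant cohomology of $M^T$ because both sides are multiplicative in $N^{\vir}$ and agree on an honest $T$-equivariant bundle with nonzero weights. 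Feeding these into the equivariant virtual Riemann-Roch theorem \cite[Corollary 3.4]{FG} on the \emph{compact} space $M^T$ rewrites the right-hand side of \eqref{chiloc} as the cohomological integral $\int_{[M^T]^{\vir}}\ch^T\(K^{1/2}_{M,\vir}|_{M^T}\)\Td^T\(T^{\vir}_{M^T}\)\Td^T(N^{\vir})/e(N^{\vir})$, i.e.\ as the middle term of the displayed chain \eqref{locchi}.

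The step I expect to be the main obstacle is identifying this last integral with $\chi\_t\(M,\Ohat\)$ itself, i.e.\ with the honest weight-generating function of $R\Gamma\(M,\Ohat\)$ on the noncompact $M$. One clean route is the equivariant concentration theorem (Thomason): after inverting $t^i-1$ for $i\ge1$, $\iota_*$ becomes an isomorphism $K^0_T(M^T)_{\mathrm{loc}}\rt{\sim}K^0_T(M)_{\mathrm{loc}}$, so $\Ohat=\iota_*\gamma$ for a unique $\gamma$ and $\chi\_t\(M,\Ohat\)=\chi\_t(M^T,\gamma)$ because equivariant Euler characteristic commutes with the proper pushforward $\iota_*$ and is a genuine character on the compact $M^T$; one then has to show that $\gamma$ has the Chern character of $\cO^{\vir}_{M^T}\!\big/\Lambda\udot(N^{\vir})^\vee\otimes K^{1/2}_{M,\vir}|_{M^T}$, which is essentially the $K$-theoretic localisation \eqref{Kvirloc} and can be read off from the construction of $\cO^{\vir}_M$ as $L\iota_0^*\cO_C$ together with the $T$-weight decomposition of the cone $C$. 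Since we do not want to assume the $[0,1]$-dg-manifold structure used for \eqref{Kvirloc} in \cite{CFK} (nor to invoke \cite{Qu}), the self-contained alternative I would take is to stay on the cohomological side: compute $R\Gamma\(M,\Ohat\)$ term by term from the $T$-invariant affine cover above, note that each term is $R\Gamma$ of a $T$-invariant affine scheme and so has a character given by Atiyah-Bott localisation, and check that the alternating sum of these characters is exactly the integral $\int_{[M^T]^{\vir}}(\cdots)$ via \eqref{GP}. Throughout, the delicate point is the bookkeeping of the localisation --- making sure the formal series coming from \eqref{character} is genuinely the Laurent expansion, in the correct annulus, of the rational function produced on $M^T$ --- which is exactly why compactness of $M^T$ is indispensable and why I would build the argument on an explicit $T$-invariant cover of $M$ from the outset.
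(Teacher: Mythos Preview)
Your overall strategy --- promote the chain \eqref{locchi} to an equivariant statement, with the crux being the identification of $\chi\_t(M,\Ohat)$ with the cohomological integral over $[M^T]^{\vir}$ --- matches the paper's, and your reduction of the right-hand side of \eqref{chiloc} to that integral via virtual Riemann--Roch on the compact $M^T$ is fine. The divergence, and the gap, is in how you propose to bridge to the left-hand side.

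Your Thomason route is, as you yourself say, essentially a reproof of the $K$-theoretic virtual localisation \eqref{Kvirloc}: identifying the class $\gamma$ with $\cO^{\vir}_{M^T}\big/\Lambda\udot(N^{\vir})^\vee\otimes K^{1/2}_{\vir}|_{M^T}$ is precisely the content of \cite{CFK,Qu}, so this is circular for the present purpose. Your ``self-contained alternative'' via a $T$-invariant affine cover has a genuine gap. Computing the character of $\Gamma(U,\Ohat)$ on an affine piece $U$ is not ``Atiyah--Bott localisation'' (that is a statement in equivariant \emph{cohomology}, not about characters of $T$-modules), and you offer no mechanism for reassembling the alternating sum of those characters into the specific integral $\int_{[M^T]^{\vir}}(\cdots)$ produced by \eqref{GP}. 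The passage from ``each affine piece has a computable character'' to ``the total equals the GP-localised integral'' is exactly an equivariant virtual Riemann--Roch theorem on the noncompact $M$, which is what you must supply rather than invoke. Your appeal to Bia{\l}ynicki--Birula also imports smoothness hypotheses on $M$ that are not available.

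The paper's missing ingredient is the standard finite-dimensional approximation to the Borel construction: replace $M\times_TET$ by the $M$-bundles $p\_N\colon M\times_T(\C^{N+1}\!\setminus\!\{0\})\to\PP^N$ and apply the \emph{non}-equivariant virtual Grothendieck--Riemann--Roch of \cite[Theorem~3.3]{FG} to $p\_N$. This produces a compatible family of identities over $\PP^N$ whose inverse limit lives in $H^*_T(M)\otimes_{\R[t]}\R[\![t]\!]$; one then applies \eqref{GP} to pass to the compact $M^T$-bundle $q\_N\colon M^T\times\PP^N\to\PP^N$, applies \cite[Theorem~3.3]{FG} once more to $q\_N$, and takes $\varprojlim$. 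The identification $\ch(V)=\chi\_{e^t}(V)$ for a virtual $T$-representation converts the resulting equality of limit cohomology classes into the claimed equality of characters after the substitution $s=e^t$. This Borel-model trick is precisely what converts the equivariant Riemann--Roch step you need into a sequence of ordinary GRR statements over projective bases, and it is absent from your proposal.
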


\begin{proof}
This follows directly from the localisation formula \eqref{Kvirloc} of \cite{CFK} when $M$ is a $[0,1]$-dg-manifold acted on by $T$, and from \cite{Qu} more generally. Alternatively, we can repeat the argument of \eqref{locchi} in an equivariant setting, replacing the virtual Riemann-Roch formula of \cite[Corollary 3.4]{FG} by its equivariant analogue.

We use the standard trick of approximating the homotopy quotient $M\times_T ET$ over $BT=\PP^\infty$ by the $M$-bundle
$$
p\_N\colon\ M\times_T(\C^{N+1}\take\{0\})\To\PP^N.
$$
(Here $T$ acts with weight 1 on $\C^{N+1}$.) Applying the virtual Grothendieck-Riemann-Roch theorem of \cite[Theorem 3.3]{FG} to $p\_N$ gives
\beq{1}
\ch\!\(Rp\_{N*\,}\Ohat\)\=p\_{N*}\!\left(\!\ch\!\Big(\!K^{\frac12}_{\vir}\!\Big)\Td\(T^{\vir}_M\)\cap[M]^{\vir}\right).
\eeq
As $N$ increases this is a sequence of compatible cohomology classes over $\PP^N\subset\PP^{N+1}\subset\cdots$, defining a class in
$$
\varprojlim H^*\(M\times_T(\C^{N+1}\take\{0\}),\R\)\=
H^*_T(M,\R)\otimes\_{\R[t]}\R[\![t]\!].
$$
By \eqref{GP} its class in $H^*_T(M,\R)\otimes_{\R[t]}\R(\!(t)\!)$ is equal to
\beq{1.5}
\ch\!\(Rp\_{N*\,}\Ohat\)\=
p\_{N*}\bigg(\,\frac{\ch\!\Big(\!K^{\frac12}_{\vir}\!\Big)\!\Td\!\(T^{\vir}_M\)}{e(N^{\vir})}\cap\big[M^T\big]^{\vir}\bigg).
\eeq
Again, we spell out what this means in terms of the finite dimensional models. We can expand $1/e(N^{\vir})$ as an $H^*(M)$-valued Laurent series in $t^{-1}$. Write it as a sum of terms $a_it^i$, where $a_i\in H^{-2\rk(N^{\vir})-2i}(M)$. This therefore vanishes for $-\rk(N^{\vir})-i>\dim M$, i.e. for $i$ sufficiently negative. Therefore $1/e(N^{\vir})$ is a Laurent polynomial in $t$, and we may pick $n\gg0$ such that $t^n\cdot1/e(N^{\vir})\in H^*_T(M,\R)\otimes_{\R[t]}\R[\![t]\!]\subset H^*_T(M,\R)\otimes_{\R[t]}\R(\!(t)\!)$. Then the statement is that $\varprojlim$\eqref{1} is $t^{-n}$ times by the inverse limit of the compatible sequence of cohomology classes
$$
p\_{N*}\!\left(\!\frac{\ch\!\Big(\!K^{\frac12}_{\vir}\!\Big)\!\Td\!\(T^{\vir}_M\)}{t^{-n}e(N^{\vir})}\cap\big[M^T\big]^{\vir}\right)\,\in\ H^*\(M\times_T(\C^{N+1}\take\{0\}),\R\).
$$
Using $e(N^{\vir})=\ch(\Lambda\udot(N^{\vir})^\vee)\Td(N^{\vir})$ and $T^{\vir}_M\big|_{M^T}=T^{\vir}_{M^T}+N^{\vir}$ in $K$-theory, these classes are
$$
p\_{N*}\!\left(\!\frac{\ch\!\Big(\!K^{\frac12}_{\vir}\!\Big)}{t^{-n}\ch\(\Lambda\udot(N^{\vir})^\vee\)}\Td\!\(T^{\vir}_{M^T}\)\cap\big[M^T\big]^{\vir}\right).
$$
Let $q\_N\colon M^T\times_T(\C^{N+1}\take\{0\})=M^T\times\PP^N\to\PP^N$ denote the restriction of $p\_N$. Applying \cite[Theorem 3.3]{FG} to $q\_N$ gives
\beq{ans}
\ch\!\left(\!Rq\_{N*}\!\left(\!\frac{K^{\frac12}_{\vir}\big|_{M^T}}{t^{-n}\Lambda\udot(N^{\vir})^\vee}\Td\!\(T^{\vir}_{M^T}\)\cap\big[M^T\big]^{\vir}\right)\right).
\eeq
Since the Chern character of any (virtual) $T$-representation $V$ is $\ch(V)=\chi\_{e^t}(V)$, the upshot is
$$
\chi\_{e^t}(\Ohat)\=\varprojlim\text{\eqref{1.5}}\=t^{-n}\varprojlim\text{\eqref{ans}}\=\chi\_{e^t}\!\left(\!\frac{\cO^{\vir}_{M^T}}{\Lambda\udot(N^{\vir})^\vee}\otimes K^{\frac12}_{\vir}\right).
$$
Substituting $s=e^t$ gives the result.
\end{proof}

Now we have localised to $M^T$ we can use the canonical square root \eqref{sqroot} of Proposition \ref{sqprop} to make an unambiguous definition of our refined invariants.

\begin{ass} \label{ass} We will assume
\begin{enumerate}
\item $M$ is a quasi-projective $T$-variety with projective fixed locus $M^T$,
\item $M$ has a $T$-equivariant symmetric obstruction theory.
\end{enumerate}
By Proposition \ref{sqprop}, this implies the existence of
\begin{itemize}
\item a $T$-equivariant \emph{perfect obstruction theory with $\vd=0$}, and
\item a choice of line bundle on $M^T$ whose square is $K_{M,\vir}|_{M^T}$,
\end{itemize}
and this is all we will actually use. So although it is more elegant to assume (1) and (2) from now on, the reader can replace (2) by the above two conditions instead to get a slightly more general result. We then call the line bundle $K^{1/2}_{M,\vir}\big|_{M^T}$.
\end{ass}

\begin{defn}\label{defdef}
Suppose $M$ satisfies \eqref{ass}. Then we define its refined $K$-theoretic invariant $\chi\_t\(M,\Ohat\)$ to be
\beq{tloc}
\chi\_t\!\left(\!M^T\!,\,\frac{\cO^{\vir}_{M^T}}{\Lambda\udot(N^{\vir})^\vee}\otimes K^{\frac12}_{M,\vir}\Big|_{M^T}\!\right)\,\in\ \Q\(t^{\frac12}\),
\eeq
where $K^{\frac12}_{M,\vir}\big|_{M^T}$ is the canonical choice \eqref{sqroot}.
\end{defn}

Again, when $M$ is compact, \eqref{tloc} specialises to \eqref{twRR} at $t=1$. When $M$ is noncompact (but $M^T$ is compact) and has virtual dimension $\vd=0$ we can still define a cohomological substitute for \eqref{twRR} or \eqref{chivir} (both of which are $\int_{[M]^{\vir}}1$ in the compact case) by localisation \eqref{GP} as follows:
\beq{numT}
\int_{[M^T]^{\vir}}\frac1{e(N^{\vir})}\ \in\ \Q.
\eeq
This lies in $\Q\subset\Q[t,t^{-1}]$ because $\vd(M)=0$ implies that $\rk N^{\vir}=-\vd(M^T)$. Then \eqref{tloc} refines \eqref{numT} even when $M$ is noncompact:

\begin{prop} \label{refine} If $M$ satisfies \eqref{ass} then \eqref{tloc} is a rational function of $t^{1/2}$ with poles only at roots of unity and the origin, but not at 1. We may therefore specialise to $t=1$, where we recover the rational number \eqref{numT}\emph{:}
$$
\chi\_t\!\left.\left(\!M^T\!,\,\frac{\cO^{\vir}_{M^T}}{\Lambda\udot(N^{\vir})^\vee}\otimes K^{\frac12}_{\vir}\right)\right|_{t=1}\=\int_{[M^T]^{\vir}}\frac1{e(N^{\vir})}\ \in\ \Q.
$$
\end{prop}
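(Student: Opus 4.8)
The plan is to prove the two assertions separately: (i) that \eqref{tloc} is a rational function of $t^{1/2}$ with poles only at roots of unity and the origin, and (ii) that it is regular at $t=1$, where it equals the rational number \eqref{numT}. For (i) I would work entirely inside the localised $T$-equivariant $K$-theory of the \emph{compact} scheme $M^T$; for (ii) I would pass to cohomology via the equivariant virtual Riemann-Roch computation underlying \eqref{locchi}, with the hypothesis $\vd(M)=0$ doing the real work.

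For (i): decompose the virtual normal bundle into its $T$-fixed weight pieces, $N^{\vir}=\bigoplus_{k\neq0}N^{\vir}_k\,\t^k$ on $M^T$, with $r_k:=\rk N^{\vir}_k$. Since every weight $k$ is nonzero, I would show that $\Lambda\udot(N^{\vir})^\vee$ is a unit in $K^0(M^T)_{\Q}\otimes_{\Q[t^{\pm1}]}\Q\big[t^{\pm\frac12},(1-t^k)^{-1}:k\neq0\big]$: by the splitting principle $\Lambda\udot\big((N^{\vir}_k)^\vee\t^{-k}\big)$ equals $(1-t^{-k})^{r_k}$ times $1+(\text{nilpotent})$, the nilpotent part supported in $\Q[t^{\pm1},(1-t^{-k})^{-1}]$ --- here I use that the augmentation ideal of $K^0(M^T)_{\Q}$ is nilpotent, $M^T$ being a finite-dimensional scheme. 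Hence the class $\cO^{\vir}_{M^T}\otimes K^{1/2}_{M,\vir}|_{M^T}\big/\Lambda\udot(N^{\vir})^\vee$ lies in $K^0(M^T)_{\Q}\otimes_{\Q[t^{\pm1}]}\Q\big[t^{\pm\frac12},(1-t^k)^{-1}:k\neq0\big]$, and applying $\chi^T(M^T,-)$ --- a homomorphism of modules over the coefficient ring, valid since $M^T$ is projective --- puts \eqref{tloc} in $\Q\big[t^{\pm\frac12},(1-t^k)^{-1}:k\neq0\big]$. Its denominators being products of powers of $t^{1/2}$ and factors $1-t^k$, the only possible poles are at $t=0,\infty$ and at roots of unity.

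For (ii): substitute $t=e^s$, with $s=c_1(\t)$ the equivariant parameter, and expand about $s=0$. The equivariant form of the chain of equalities \eqref{locchi} --- carried out on the projective $M^T$ via the $q\_N\colon M^T\times\PP^N\to\PP^N$ approximation of the proof of Proposition \ref{abc}, which applies since $M^T$ is already proper --- identifies \eqref{tloc}, as a Laurent series in $s$, with $\int_{[M^T]^{\vir}}\ch\big(K^{1/2}_{M,\vir}|_{M^T}\big)\,\Td\big(T^{\vir}_M|_{M^T}\big)\big/e(N^{\vir})$, everything equivariant and $1/e(N^{\vir})$ expanded in $s$, using $T^{\vir}_M|_{M^T}=T^{\vir}_{M^T}+N^{\vir}$ and the identity $e(V)=\ch(\Lambda\udot V^\vee)\,\Td(V)$. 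Because every weight of $N^{\vir}$ is nonzero, $e(N^{\vir})=c\,s^{\rk N^{\vir}}\big(1+O(s^{-1})\big)$ with $c\in\Q^\times$ and the lower-order terms carrying strictly positive cohomological degree, while $\ch\big(K^{1/2}_{M,\vir}|_{M^T}\big)\Td\big(T^{\vir}_M|_{M^T}\big)$ is a \emph{power} series in $s$ (each factor being regular with degree-$0$ part $1$ at $s=0$). Now $\vd(M)=0$ forces $\rk N^{\vir}=-\vd(M^T)$, so $1/e(N^{\vir})=c^{-1}\sum_{m\ge0}\gamma_m\,s^{\vd(M^T)-m}$ with $\gamma_m$ of cohomological degree $2m$. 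Pairing the coefficient of $s^j$ against $[M^T]^{\vir}$, of dimension $\vd(M^T)$, selects cohomological degree $2\vd(M^T)$; since the cohomology of $M^T$ sits in degrees $\ge0$ and the $s^{-m}$ corrections raise degree by $2m$, this component vanishes for $j<0$, so there is no pole at $s=0$, i.e.\ none at $t=1$. The surviving $j=0$ term is $c^{-1}\int_{[M^T]^{\vir}}\gamma_{\vd(M^T)}=\int_{[M^T]^{\vir}}1/e(N^{\vir})$, which is \eqref{numT}.

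I expect (ii) to be the main obstacle: setting up the equivariant virtual Grothendieck-Riemann-Roch comparison carefully and, above all, checking that the grading by powers of the equivariant parameter $s$ is rigidly tied to the cohomological grading on $M^T$ in exactly the manner forced by $\vd(M)=0$ --- this alignment is precisely why no pole at $t=1$ appears.
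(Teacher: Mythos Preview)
Your proposal is essentially correct. Part (ii) follows the same strategy as the paper: pass to cohomology via equivariant virtual Riemann--Roch, expand $1/e(N^{\vir})$ as a Laurent series in the equivariant parameter, and use the key constraint $\rk N^{\vir}=-\vd(M^T)$ (forced by $\vd(M)=0$) so that the cohomological degree of each coefficient is rigidly tied to its power of $s$, killing the negative-power terms after integration over $[M^T]^{\vir}$.

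For part (i), however, your route is genuinely different from the paper's. The paper proves rationality by going through cohomology: it writes $(N^{\vir})^\vee=N^0-N^1$, expands $\ch\!\big(1/\Lambda\udot(N^{\vir})^\vee\big)$ explicitly as a product over Chern roots $x_i+w_it$ and $y_j+v_jt$, factors each $1-s^{w_i}$ as $(1-s)[w_i]'_s$, and tracks the powers of $(1-s)$ against cohomological degree to arrive at a closed formula (its equation \eqref{pm}) visibly exhibiting the pole structure. Your argument stays inside $K$-theory and is more conceptual: you pull out the scalar factor $(1-t^{-k})^{r_k}$ from each weight piece and observe that what remains is a unipotent element, hence invertible. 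This is cleaner, but it forgoes the explicit rational-function formula which the paper then reuses to give a second, independent derivation of the $t=1$ specialisation.

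One small caveat on your (i): the nilpotency you invoke is safest phrased as ``rank-zero classes in $K^0(M^T)$ act nilpotently on $K_0(M^T)$'', which follows from the finite filtration of $K_0$ by dimension of support on the projective scheme $M^T$. Nilpotency of the augmentation ideal of $K^0(M^T)_{\Q}$ itself is less transparent when $M^T$ is singular, but since $\cO^{\vir}_{M^T}$ lives in $K_0$ and only the action is needed, this suffices.
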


\begin{proof}
We compute
\begin{eqnarray} \label{fraction}
\chi\_{e^t}\!\left(\!M^T\!,\,\frac{\cO^{\vir}_{M^T}}{\Lambda\udot(N^{\vir})^\vee}\otimes K^{\frac12}_{\vir}\right) &=&
\int_{[M^T]^{\vir}}\frac{\ch\!\Big(\!K^{\frac12}_{\vir}\!\Big)\!\Td\!\(T^{\vir}_{M^T}\)}{\ch\!\(\Lambda\udot N^{\vir}\)^{\!\vee}} \\
&=& \int_{[M^T]^{\vir}}\frac{\ch\!\Big(\!K^{\frac12}_{\vir}\!\Big)\!\Td\!\(T^{\vir}_M\big|_{M^T}\)}{e(N^{\vir})}\,.\hspace{-7mm} \nonumber
\end{eqnarray}
We wish to evaluate this at $t=0$ (i.e. $e^t=1$).
Writing $K^{\frac12}_{\vir}=L\t^w$ for some $w\in\Z[1/2]$,
we expand
\beqa
1/e(N^{\vir}) &=& c_0t^{\vd(M^T)}+c_1t^{\vd(M^T)-1}+\ldots\,, \\
\ch\!\(K^{1/2}_{\vir}\) &=& e^{wt}(1+c), \\ 
\Td\!\(T^{\vir}_{M}\big|_{M^T}\) &=& 1+d_1t+d_2t^2+\ldots\,, \quad\text{so that} \\
\ch\!\(K^{1/2}_{\vir}\)\Td\!\(T^{\vir}_{M}\big|_{M^T}\) &=& 1+e_1t+e_2t^2+\ldots\,,
\eeqa
where $c_i\in H^{2i}(M^T)$ and $c,d_i,e_i\in H^{>0}(M^T)$. Therefore $c_i=0$ for $i\gg0$ and the first series is a finite sum. Consider multiplying it by the last series.
\begin{itemize}
\item The $t^{<0}$ terms of the first series all have coefficients in $H^{>2\vd(M^T)}(M^T)$ (both before and after multiplication by the last series). These integrate to zero against $[M^T]^{\vir}$.
\item The $t^{>0}$ terms of the first series go to 0 at $t=0$, and the same is true when they are multiplied by the last series.
\end{itemize}
So we are left with the $c_{\vd(M^T)}t^0$ term of the first series. When multiplied by any $e_i$ we get a class in $H^{>2\vd(M^T)}(M^T)$ whose integral over $[M^T]^{\vir}$ is zero. So when we multiply $c_{\vd(M^T)}t^0$ by the last series and integrate, we get the same as just (multiplying by 1 and) integrating; this contributes
$$
\int_{[M^T]^{\vir}}c_{\vd(M^T)}\=\int_{[M^T]^{\vir}}\frac1{e(N^{\vir})}\,.
$$
This gives the numerical result claimed. But since it doesn't give the statement about rationality, we now go back to the first line of \eqref{fraction} and expand everything in sight carefully. \medskip

Write $(N^{\vir})^\vee=N^0-N^1$ as a global difference of two $T$-bundles with nonzero weights. Let the Chern roots of $N^0$ and $N^1$ be $x_i+w_it$ and $y_j+v_jt$ respectively, where $w_i,v_j$ are all nonzero integers. Letting $s:=e^t$, we have
$$
\frac1{\ch\!\(\Lambda\udot N^{\vir}\)^{\!\vee}}\=
\frac{\prod_j(1-e^{y_j}e^{tv_j})}{\prod_i(1-e^{x_i}e^{tw_i})}
\=\frac{\prod_j(1-s^{v_j})\Big(1-E(y_j)\frac{s^{v_j}}{1-s^{v_j}}\Big)}{\prod_i(1-s^{w_i})\Big(1-E(x_i)\frac{s^{w_i}}{1-s^{w_i}}\Big)}\,,
$$
where $E(u):=e^u-1$ is the power series $u+u^2/2!+u^3/3!+\cdots$.

If we write $1-q^v=(1-q)[v]'_q$, where $[v]'_q:=1+q+q^2+\cdots+q^{v-1}$ is the quantum integer,\footnote{There is another convention $[v]\_q:=q^{-(v-1)/2}[v]_q'$ for quantum integers \eqref{qq} that we will use in Section \ref{sscase}.} this becomes
\begin{align} \nonumber
(1-s)^{-\rk(N^{\vir})}&\frac{\prod_j[v_j]'_s}{\prod_i[w_i]'_s}
\prod_j\bigg(1-E(y_j)\frac{s^{v_j}}{(1-s)[v_j]'_s}\bigg) \\ \label{long}
\times\prod_i&\bigg(1+E(x_i)\frac{s^{w_i}}{(1-s)[w_i]'_s}+E(x_i)^2\frac{s^{2w_i}}{(1-s)^2([w_i]'_s)^2}+\cdots\bigg).
\end{align}
Now $E(x_i)=x_i+x_i^2/2+x_i^3/3!+\cdots$ lies in cohomological degrees $\ge2$, so, when we multiply out, any $(1-s)^{-j}$ term comes multiplied by a term $a_js^{u_j}$, where $a_j$ has cohomological degree $\ge 2j$ on $M^T$ (and $u_j$ is an integer). And $-\rk(N^{\vir})=\vd(M^T)$, so \emph{on restriction to $[M^T]^{\vir}$} we get
\beq{nak}
(1-s)^{\vd(M^T)}\frac{\prod_j[v_j]'_s}{\prod_i[w_i]'_s}\Big(1+a_1s^{u_1}(1-s)^{-1}+\cdots+a_{\vd}s^{u_{\vd}}(1-s)^{-\vd}\Big)
\eeq
by ignoring all cohomology classes which have degree $\ge2\dim[M^T]^{\vir}=2\vd(M^T)$.

Since $T^{\vir}_{M^T}$ is a fixed complex with trivial $T$ action, $\ch(K_{\vir}^{1/2})\Td(T^{\vir}_{M^T})=s^w(1+\sigma)$ for some $\sigma\in H^{\ge2}(M^T)$ with no $t$ (or $s$) dependence. Multiplying by \eqref{nak} and integrating,  \eqref{fraction} becomes
\beq{pm}
\chi\_s\=s^w\frac{\prod_j[v_j]'_s}{\prod_i[w_i]'_s}\sum_{k=0}^{\vd(M^T)}(1-s)^ks^{u_{\vd-k}}\int_{[M^T]^{\vir}}(1+\sigma)\wedge a_{\vd-k}\,.
\eeq
Thus $\chi\_s$ is a rational function of $s^{1/2}$ with poles only at roots of unity and possibly zero, but not 1, as required. \medskip

Since we've got this far we may as well use \eqref{pm} to give another derivation of the evaluation at $s=1$. This gives
$$
\chi\_1\=\frac{\prod_jv_j}{\prod_iw_i}\int_{[M^T]^{\vir}}a_{\vd}\,.
$$
This integral sees only the part of $a_{\vd}$ which has degree precisely $2\!\;\vd$, so only the degree 2 parts $x_i,\,y_j$ of $E(x_i),\,E(y_j)$ in \eqref{long} contribute to it. So replacing $E(x_i),\,E(y_j)$ by $x_i,\,y_j$ in \eqref{long}, it becomes the cohomological degree $\vd$ part of
$$
(1-s)^{\vd}\frac{\prod_j[v_j]'_s}{\prod_i[w_i]'_s}\frac{\prod_j\Big(1-\frac{y_js^{v_j}}{(1-s)[v_j]'_s}\Big)}{\prod_i\Big(1-\frac{x_is^{w_i}}{(1-s)[w_i]'_s}\Big)}\=(1-s)^{\vd}\frac{\prod_j\Big([v_j]'_s-\frac{y_js^{v_j}}{(1-s)}\Big)}{\prod_i\Big([w_i]'_s-\frac{x_is^{w_i}}{(1-s)}\Big)}\,.
$$
To evaluate at $s=1$, we now reuse $t$ to denote $s-1$ and take the coefficient of $t^0$ in the Laurent expansion (in $t^{-1}$!) of
$$
(-t)^{\vd}\frac{\prod_j\Big(v_j+\frac{y_js^{v_j}}t\Big)}{\prod_i\Big(w_i+\frac{x_is^{w_i}}t\Big)}\=\frac{\prod_j(-v_jt-y_j)}{\prod_i(-w_it-x_i)}\=\frac1{e(N^{\vir})}\,.
$$
When we expand $1/e(N^{\vir})$ as a Laurent series in $t^{-1}$ the coefficient of $t^i$ lies in $H^{2\vd(M^T)-2i}(M^T)$. Therefore integrating over $[M^T]^{\vir}$ takes only the $t^0$ term. We conclude again that
\[
\chi\_1\=\int_{[M^T]^{\vir}}\frac1{e(N^{\vir})}\,.\qedhere
\]
\end{proof}

\begin{prop} \label{tt-1} The refinement \eqref{tloc} is a rational function of $t^{\frac12}$ (with poles only at roots of unity and the origin) invariant under $t^{\frac12}\leftrightarrow t^{-\frac12}$, and is deformation invariant.
\end{prop}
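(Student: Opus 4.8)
The pole structure and rationality asserted here are already contained in Proposition~\ref{refine} (via the explicit formula \eqref{pm}), so what remains is the $t^{\frac12}\leftrightarrow t^{-\frac12}$ symmetry and deformation invariance. The plan for the symmetry is to apply Grothendieck--Serre duality on the \emph{compact} scheme $M^T$ to the perfect complex
$$
\mathcal F\ :=\ \frac{\cO^{\vir}_{M^T}}{\Lambda\udot(N^{\vir})^\vee}\otimes K^{\frac12}_{M,\vir}\Big|_{M^T},
$$
which makes sense after localisation since $\cO^{\vir}_{M^T}=[L\iota_0^*\cO_C]$ is perfect ($\iota_0$ being a regular embedding) and $\Lambda\udot(N^{\vir})^\vee$ becomes invertible. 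Duality for $M^T\to\pt$ gives $R\Gamma(M^T,\mathcal F)^\vee\cong R\Gamma\(M^T,\mathcal F^\vee\otimes\omega^\bullet_{M^T}\)$; since the $T$-equivariant dual of a finite-dimensional graded vector space exchanges characters $t^a\leftrightarrow t^{-a}$, applying $\chi\_t$ and extending $\Q(t^{\frac12})$-linearly turns this into
$$
\overline{\chi\_t(M^T,\mathcal F)}\=\chi\_t\(M^T,\mathcal F^\vee\otimes\omega^\bullet_{M^T}\),
$$
where $\overline{\ \cdot\ }$ denotes the substitution $t^{\frac12}\mapsto t^{-\frac12}$. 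So the symmetry follows once we know $\mathcal F^\vee\otimes\omega^\bullet_{M^T}=\mathcal F$ as classes in $G$-theory.

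To establish this self-duality I would combine three computations. First, a ``virtual Serre duality'' for $\cO^{\vir}_{M^T}$: \'etale-locally $M^T$ is the zero locus of a section of the obstruction bundle over a smooth chart, so $\cO^{\vir}_{M^T}$ is a Koszul complex, which is self-dual up to a twist by the determinant of the obstruction bundle and a shift; globalising, and combining with the dualizing complex of the smooth chart, gives $R\hom\(\cO^{\vir}_{M^T},\omega^\bullet_{M^T}\)=(-1)^{\vd(M^T)}\det\((E\udot)^T\)\otimes\cO^{\vir}_{M^T}$ in $G$-theory, where $(E\udot)^T$ is the fixed part of $\iota^*E\udot$. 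Second, the elementary identities $\(\Lambda\udot(N^{\vir})^\vee\)^\vee=\Lambda\udot N^{\vir}$ and $\Lambda\udot V^\vee=(-1)^{\rk V}(\det V)^{-1}\Lambda\udot V$; applied to $V=N^{\vir}$ and using $\rk N^{\vir}=-\vd(M^T)$, these give $\Lambda\udot(N^{\vir})^\vee/\Lambda\udot N^{\vir}=(-1)^{\vd(M^T)}(\det N^{\vir})^{-1}$. Third, Proposition~\ref{sqprop}, which gives $\big(K^{\frac12}_{M,\vir}|_{M^T}\big)^{\otimes2}=K_{M,\vir}|_{M^T}=\det\((E\udot)^T\)\otimes(\det N^{\vir})^{-1}$, the last equality because $\iota^*E\udot=(E\udot)^T\oplus(N^{\vir})^*$. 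Substituting the first two into $\mathcal F^\vee\otimes\omega^\bullet_{M^T}$, the signs $(-1)^{\vd(M^T)}$ cancel, and the residual determinant twist $\det\((E\udot)^T\)\otimes(\det N^{\vir})^{-1}$ is exactly $\big(K^{\frac12}_{M,\vir}|_{M^T}\big)^{\otimes2}$ by the third, which converts $\mathcal F^\vee\otimes\omega^\bullet_{M^T}$ back into $\mathcal F$ and proves the symmetry.

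Deformation invariance is then routine: in a $T$-equivariant flat family over a smooth connected base $B$, with proper relative fixed locus and a relative symmetric obstruction theory, the ingredients $\cO^{\vir}_{M^T}$, $N^{\vir}$ and the canonical square root \eqref{sqroot} all make sense in the family, so $\mathcal F$ defines a class over $B$ whose equivariant $K$-theoretic pushforward to $B$ is locally constant and restricts on each fibre to $\chi\_t(M,\Ohat)$. (Alternatively, the deformation invariance of $\cO^{\vir}$ in $K$-theory from \cite{FG, CFK} carries over verbatim to the equivariant refinement.)

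The main obstacle is the first computation above: globalising the Koszul self-duality and pinning down the precise twist $\det\((E\udot)^T\)$ and sign $(-1)^{\vd(M^T)}$ in the virtual Serre duality when $M^T$ admits no global embedding into a smooth scheme, so that one must instead argue through the cotangent complex or patch \'etale-local descriptions while tracking the signs of the dualized Koszul differentials. Everything else is the bookkeeping of ranks and signs that the square root \eqref{sqroot} was engineered to make cancel.
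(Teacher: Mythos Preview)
Your approach is correct and essentially parallel to the paper's, but you are working harder than necessary. The paper's proof of the $t^{\frac12}\leftrightarrow t^{-\frac12}$ symmetry is a direct application of the \emph{weak virtual Serre duality} of \cite[Proposition~3.13]{FG} on the compact scheme $M^T$: for any class $V$,
\[
\chi\!\(\cO^{\vir}_{M^T}\otimes V\)\=(-1)^{\vd(M^T)}\chi\!\(\cO^{\vir}_{M^T}\otimes V^\vee\otimes K_{M^T\!,\,\vir}\).
\]
This is precisely the ``first computation'' you identify as the main obstacle --- the globalisation of Koszul self-duality with the correct twist $\det\((E\udot)^T\)=K_{M^T\!,\,\vir}$ and sign $(-1)^{\vd(M^T)}$ --- and it is already established in \cite{FG}, so there is no need to redo the \'etale-local patching. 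Once this is in hand, the remaining bookkeeping is the same as yours: one uses $K_{M,\vir}\big|_{M^T}=K_{M^T\!,\,\vir}\otimes(\det N^{\vir})^{-1}$ (from the fixed/moving splitting), the identity \eqref{indenti}
\[
\Lambda\udot(N^{\vir})^\vee\=(-1)^{\rk N^{\vir}}\Lambda\udot N^{\vir}\otimes\det(N^{\vir})^*,
\]
and $\rk N^{\vir}=-\vd(M^T)$ to see that the signs cancel and the determinant twists assemble into $K^{\frac12}_{M,\vir}$.

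For deformation invariance the paper says nothing further in this proof; it is implicit in the standard deformation invariance of virtual structure sheaves from \cite{FG,CFK}, exactly as you indicate.
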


\begin{proof}
All that is left to prove is the invariance under $t^{\frac12}\leftrightarrow t^{-\frac12}$. This follows from the ``weak virtual Serre duality" of \cite[Proposition 3.13]{FG} on $M^T$:
\begin{align*}
\chi\_{t^{-1}}&\!\left(\!M^T,\,\frac{\cO^{\vir}_{M^T}}{\Lambda\udot(N^{\vir})^\vee}\otimes K^{\frac12}_{M,\vir}\right) \\
&\=(-1)^{\vd(M^T)}\chi\_t\!\left(\!M^T\!,\,\frac{\cO^{\vir}_{M^T}}{\Lambda\udot N^{\vir}}\otimes K^{-\frac12}_{M,\vir}\otimes K_{M^T\!,\vir}\right) \\
&\=(-1)^{\rk(N^{\vir})}\chi\_t\!\left(\!M^T\!,\,\frac{\cO^{\vir}_{M^T}}{\Lambda\udot N^{\vir}}\otimes K^{\frac12}_{M,\vir}\otimes\det N^{\vir}\right) \\
&\=\chi\_t\!\left(\!M^T\!,\,\frac{\cO^{\vir}_{M^T}}{\Lambda\udot(N^{\vir})^\vee}\otimes K^{\frac12}_{M,\vir}\right)\!,
\end{align*}
where the last equality is from the identity
\beq{indenti}
\Lambda\udot(N^{\vir})^\vee\ \cong\ (-1)^{\rk(N^{\vir})}\Lambda\udot N^{\vir}\otimes\det(N^{\vir})^*. \qedhere
\eeq
\end{proof}

\subsection{Shifted cotangent bundles} When $M$ is the $(-1)$-shifted cotangent bundle $T^*[-1]M^T$ of a quasi-smooth derived projective scheme $M^T$, with the obvious $T=\C^*$ action on the fibres, it has a symmetric perfect obstruction theory and the refined invariant \eqref{tloc} simplifies. Letting $p$ denote the projection $M\to M^T$ we get the exact triangle
\beq{tri}
p^*\(T_{\!M^T}^{\vir}\)\!\;^\vee\!\otimes\t\;[-1]\To T_M^{\vir}\rt{p_*}p^*T_{\!M^T}^{\vir},
\eeq
where $\t$ denotes the standard weight 1 representation of $T$. In particular
$K_{M,\vir}\=p^*(K_{M^T,\vir})^2\otimes\t^{\vd(M^T)}$ and the canonical choice \eqref{sqroot} of square root is just
\beq{kver}
K_{M,\vir}^{\frac12}\ :=\ K_{M^T\!\!,\,\vir}\,\t^{\vd(M^T)/2}.
\eeq
On the zero section $M^T\subset M$ the exact triangle \eqref{tri} gives
$$
N^{\vir}\=\(T_{\!M^T}^{\vir}\)\!\;^\vee\!\otimes\t\;[-1]\=\LL^{\vir}_{M^T}\;\t\,[-1].
$$
Therefore, using both \eqref{kver} and the identity \eqref{indenti}, we have in localised $K$-theory,
\begin{multline*}
\frac1{\Lambda\udot\(N^{\vir}\)^\vee}\otimes K^{\frac12}_{M,\vir}\=
(-1)^{\rk(N^{\vir})}\frac{\det(N^{\vir})\otimes K_{M^T\!\!,\,\vir}\,\t^{\vd(M^T)/2}}{\Lambda\udot\;N^{\vir}} \\
\= (-1)^{\vd(M^T)}K_{M^T\!\!,\,\vir}^{-1}\,\t^{-\vd(M^T)}\otimes K_{M^T\!\!,\,\vir}\,\t^{\vd(M^T)/2}\otimes\Lambda\udot(\LL^{\vir}_{M^T}\t)
\\ \=
(-1)^{\vd(M^T)}\t^{-\vd(M^T)/2}\Lambda\udot\(\LL_{M^T}^{\vir}\t\).
\end{multline*}
Substituted into \eqref{tloc}, this gives the following result.

\begin{prop}\label{tstar-1} If $M^T$ is a quasi-smooth derived projective scheme, the $K$-theoretic refined invariant \eqref{tloc} of $M=T^*[-1]M^T$ is
\begin{eqnarray}
\chi\_t\(M,\Ohat\) &=& (-1)^{\vd(M^T)}t^{-\frac{\vd(M^T)}2}\sum_i(-1)^i\chi\!\left(\cO^{\vir}_{M^T}\otimes\Lambda^i\LL^{\vir}_{M^T}\right)t^i \nonumber \\
&=:\!& (-1)^{\vd(M^T)}t^{-\frac{\vd(M^T)}2}\chi^{\vir}_{-t}\(M^T\), \label{shifted}
\end{eqnarray}
where $\chi^{\vir}_y$ is the virtual $\chi\_y$-genus of Fantechi-G\"ottsche \cite{FG}. Specialising to $t=1$ gives the signed virtual Euler characteristic studied in \cite{JT}.\hfill$\square$
\end{prop}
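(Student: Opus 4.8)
The plan is to prove the proposition by assembling the ingredients of Definition \ref{defdef} for $M=T^*[-1]M^T$ and then carrying out one $\lambda$-ring computation in localised equivariant $K$-theory. First I would use the exact triangle \eqref{tri} to record that $M$ carries a natural $T$-equivariant symmetric perfect obstruction theory --- so Assumption \ref{ass} holds and Definition \ref{defdef} applies --- that $\vd(M)=0$, and that the whole $T$-fixed locus of $M$ is the zero section $M^T$, which is projective by hypothesis. Taking determinants of \eqref{tri} gives $K_{M,\vir}=p^*(K_{M^T,\vir})^{\otimes2}\otimes\t^{\vd(M^T)}$. Decomposing $\LL^{\vir}_M|_{M^T}$ into $T$-weight spaces as in the proof of Proposition \ref{sqprop}, its weight-$0$ part is $\LL^{\vir}_{M^T}$ (of rank $\vd(M^T)$) while its only other part has negative weight; hence the nonnegative-weight part is just $\LL^{\vir}_{M^T}$, and \eqref{sqroot} produces exactly the naive square root \eqref{kver}, namely $K^{1/2}_{M,\vir}=K_{M^T,\vir}\,\t^{\vd(M^T)/2}$.

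Next I would restrict \eqref{tri} to the zero section, which splits $T^{\vir}_M|_{M^T}$ into its fixed part $T^{\vir}_{M^T}$ and its moving part --- the virtual normal bundle --- giving $N^{\vir}=(T^{\vir}_{M^T})^\vee\otimes\t[-1]=\LL^{\vir}_{M^T}\t[-1]$, a class of rank $-\vd(M^T)$. The computational core is to evaluate the twisting factor $K^{1/2}_{M,\vir}/\Lambda\udot(N^{\vir})^\vee$ appearing in \eqref{tloc}: I would rewrite $\Lambda\udot(N^{\vir})^\vee$ as $(-1)^{\rk N^{\vir}}\Lambda\udot N^{\vir}\otimes\det(N^{\vir})^*$ using \eqref{indenti}, and then use $\det N^{\vir}=K_{M^T,\vir}^{-1}\,\t^{-\vd(M^T)}$ together with $1/\Lambda\udot N^{\vir}=\Lambda\udot(\LL^{\vir}_{M^T}\t)$ (valid in localised $K$-theory, where $N^{\vir}=-\LL^{\vir}_{M^T}\t$). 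The two copies of $K_{M^T,\vir}$ cancel, the $\t$-powers combine to $\t^{-\vd(M^T)/2}$, and $(-1)^{\rk N^{\vir}}=(-1)^{\vd(M^T)}$, so the twisting factor is $(-1)^{\vd(M^T)}\t^{-\vd(M^T)/2}\Lambda\udot(\LL^{\vir}_{M^T}\t)$. Substituting into Definition \ref{defdef} and using that $\chi\_t$ sends $\cO^{\vir}_{M^T}\otimes\Lambda\udot(\LL^{\vir}_{M^T}\t)$ to $\sum_i(-1)^it^i\chi(\cO^{\vir}_{M^T}\otimes\Lambda^i\LL^{\vir}_{M^T})$ then gives the asserted formula, which is the signed, shifted virtual $\chi\_{-t}$-genus of Fantechi--G\"ottsche; setting $t=1$ collapses the alternating sum to the virtual Euler characteristic, up to the sign $(-1)^{\vd(M^T)}$, recovering the invariant of \cite{JT}.

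The only point needing care --- hence the main (modest) obstacle --- is the bookkeeping with $T$-weights and (super-)determinants: one must check that the canonical square root of Proposition \ref{sqprop} really does reduce to \eqref{kver} here (this is exactly where the factor $\t^{\vd(M^T)/2}$ is produced), and that the $\lambda$-ring identities $\Lambda\udot(V[-1])=(\Lambda\udot V)^{-1}$ and $\det(-V)=(\det V)^{-1}$, alongside \eqref{indenti}, are applied legitimately inside the completed localisation $K^0_T(M^T)\otimes_{\Z[t,t^{-1}]}\Q(t^{1/2})$, where the a priori infinite series $\Lambda\udot$ of a class with only nonzero $T$-weights represent honest rational functions. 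Everything else is formal substitution into \eqref{tloc} and unwinding the definition of $\chi^{\vir}_y$.
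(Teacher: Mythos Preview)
Your proposal is correct and follows essentially the same approach as the paper: identify $N^{\vir}=\LL^{\vir}_{M^T}\t[-1]$ from the triangle \eqref{tri}, verify the canonical square root \eqref{sqroot} reduces to \eqref{kver}, apply the identity \eqref{indenti} to convert $1/\Lambda\udot(N^{\vir})^\vee\otimes K^{1/2}_{M,\vir}$ into $(-1)^{\vd(M^T)}\t^{-\vd(M^T)/2}\Lambda\udot(\LL^{\vir}_{M^T}\t)$, and substitute into \eqref{tloc}. If anything you are slightly more explicit than the paper in checking that the nonnegative-weight part of $\LL^{\vir}_M|_{M^T}$ is exactly $\LL^{\vir}_{M^T}$, which the paper asserts without comment.
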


\section{$K$-theoretic invariants from degeneration loci}\label{degensec}
%\subsection{Review of cohomological theory}
Fix a map of bundles
$$
\sigma\,:\ E_0\To E_1
$$
of ranks $r_0,r_1$ over a smooth ambient space $X$. We suppose for simplicity\footnote{More general degeneracy loci were treated in \cite{GT1, GT2}.} that $\dim\ker(\sigma|_x)\le1$ for all points $x\in X$.
Then we let
$$
D(\sigma)\ :=\ \big\{x\in X\ \colon\ \sigma|_x\ \mathrm{is\ not\ injective}\big\}
$$
denote the degeneracy locus of $\sigma$. Its scheme structure is defined by the vanishing ideal of $\Lambda^{r_0}\sigma$ --- the ideal generated by the $r_0\times r_0$ minors of $\sigma$.

Furthermore it is shown in \cite{GT1} that $D(\sigma)$ inherits  a \emph{perfect obstruction theory} by seeing it as the vanishing locus of the composition
\beq{compo}
\cO_{\PP(E_0)}(-1)\Into q^*E_0\rt{q^*\sigma}q^*E_1 \quad\mathrm{on}\ \ \PP(E_0)\rt{q}X.
\eeq
This perfect obstruction theory depends only on the complex $E_0\to E_1$ up to quasi-isomorphism, and endows $D(\sigma)$ with a virtual cycle of codimension $r_1-r_0+1$ whose pushforward to $X$ is described in \cite{GT1} by the Thom-Porteous formula as
\beq{Tpo}
\iota_*\big[D(\sigma)\big]^{\vir}\=c_{r_1-r_0+1}(E_1-E_0)\cap[X].
\eeq

\subsection{$K$-theoretic degeneracy loci}
The above perfect obstruction theory induces a virtual structure sheaf $\cO^{\vir}_{D(\sigma)}$ on the degeneracy locus by \eqref{vss}.

The $K$-theoretic analogue of the Thom-Porteous formula is the \emph{Eagon-Northcott complex} of $\sigma$. When $D(\sigma)$ has the correct codimension, this complex is well known to resolve $\cO_{D(\sigma)}$. Here we show that even when it has the wrong codimension, the $K$-theory class of the Eagon-Northcott complex is $\cO^{\vir}_{D(\sigma)}\in K_0(X)$.

\begin{thm}\label{3.3}
The pushforward of $\cO^{\vir}_{D(\sigma)}$ to $X$ has $K$-theory class
$$
\cO_X-\det(E_0-E_1)\otimes\bigwedge\!\!^r(E_1-E_0),
$$
where $r=\rk E_1-\rk E_0$.
\end{thm}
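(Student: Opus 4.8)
The plan is to reduce the statement to the case where $D(\sigma)$ has the expected codimension $r_1-r_0+1$, where the Eagon-Northcott complex of $\sigma$ is a genuine locally free resolution of $\iota_*\cO_{D(\sigma)}$, and then to check that the perfect obstruction theory of \eqref{compo} agrees with the classical (regular embedding) structure sheaf in that case. Both the virtual structure sheaf and the claimed $K$-class $\cO_X-\det(E_0-E_1)\otimes\bigwedge^r(E_1-E_0)$ are manifestly unchanged under adding a common trivial (or arbitrary) bundle to both $E_0$ and $E_1$, and under deformation of $\sigma$; so I would first record these stability properties. The obstruction theory depends only on the two-term complex $[E_0\to E_1]$ up to quasi-isomorphism, as noted after \eqref{compo}, which is what makes $\cO^{\vir}_{D(\sigma)}$ well defined and forces the answer to depend only on the $K$-class $E_1-E_0$.

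Next I would work upstairs on $\PP(E_0)\xrightarrow{q}X$, where $D(\sigma)$ is cut out as the zero locus of the section $s$ of $q^*E_1(1)$ obtained from $\cO_{\PP(E_0)}(-1)\hookrightarrow q^*E_0\xrightarrow{q^*\sigma}q^*E_1$. By definition of the virtual structure sheaf \eqref{vss}, $\cO^{\vir}_{D(\sigma)}=[L\iota_0^*\cO_{C}]$ for the cone $C$ inside the bundle $q^*E_1(1)$; equivalently, its pushforward to $\PP(E_0)$ is computed by the Koszul complex $\Lambda\udot\big(q^*E_1^\vee(-1)\big)$ of the section $s$, regardless of whether $s$ is regular, because the Koszul $K$-class always represents the virtual structure sheaf of the zero locus of a section of a bundle. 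Thus, pushing forward first to $\PP(E_0)$ and then to $X$,
$$
\iota_*\;\cO^{\vir}_{D(\sigma)}\=q_*\!\left(\Lambda\udot\big(q^*E_1^\vee(-1)\big)\right)\=q_*\!\left(\sum_{k\ge 0}(-1)^k\bigwedge\!\!^k\!E_1^\vee(-k)\,\big|_{\PP(E_0)}\right)\ \in\ K_0(X),
$$
using the projection formula and $\Lambda\udot(V(1))=\sum_k(-1)^k\Lambda^kV\cdot\cO(-k)$.

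The remaining work is a bundle computation: evaluate $q_*\big(\cO_{\PP(E_0)}(-k)\otimes\bigwedge^k E_1^\vee\big)$ for all $k$ using the standard formulas for the derived pushforward of $\cO_{\PP(E_0)}(-k)$ along $q\colon\PP(E_0)\to X$ (with $\rk E_0=r_0$, so $R^iq_*\cO(-k)$ is nonzero only for $i=0$, $k\le 0$ or $i=r_0-1$, $k\ge r_0$, involving $\Sym$ and $\det$ of $E_0$), sum the resulting alternating series, and identify it with $\cO_X-\det(E_0-E_1)\otimes\bigwedge^r(E_1-E_0)$; the generating-function identity $\sum_k(-1)^k[\Lambda^k E_1^\vee]s^k=\lambda_{-s}(E_1^\vee)$ combined with the analogous generating function for $Rq_*\cO(-k)$ makes this a formal manipulation in $K_0(X)[[s]]$ evaluated at $s=1$, and one checks convergence/polynomiality degree by degree. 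I expect the main obstacle to be bookkeeping: correctly assembling the two ranges of nonvanishing cohomology of $\cO_{\PP(E_0)}(-k)$ into a single closed form and matching signs and twists so that the $\Sym$-series telescopes to the compact expression $\det(E_0-E_1)\otimes\bigwedge^r(E_1-E_0)$ with $r=r_1-r_0$ — rather than any conceptual difficulty, since the identification of $\cO^{\vir}_{D(\sigma)}$ with the Koszul/Eagon-Northcott $K$-class is forced by \eqref{vss} and the quasi-isomorphism invariance already established.
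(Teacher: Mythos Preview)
Your proposal is correct and follows essentially the same route as the paper. Two remarks: your opening paragraph about reducing to the expected-codimension case and invoking deformation invariance is a red herring that you never actually use and should be dropped --- the argument you give from the second paragraph onward is self-contained. Second, the assertion that ``the Koszul $K$-class always represents the virtual structure sheaf of the zero locus of a section of a bundle'' is exactly the content the paper establishes (via the flat limit $\Gamma_{t\widetilde\sigma}\to C_{\widetilde\sigma}$ and pulling back the Koszul resolution of the graph by the zero section); you treat it as known, which is defensible since it is standard, but be aware that this is the only substantive step. After that, the paper does the pushdown $Rq_*$ exactly as you outline --- splitting into the ranges $i=0$ and $i\ge r_0$ of nonvanishing $Rq_*\cO_{\PP(E_0)}(-i)$ --- and simplifies the resulting sum directly rather than via generating functions, but either bookkeeping device works.
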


\begin{proof}
The composition \eqref{compo} defines a section $\widetilde\sigma\in\Gamma(q^*E_1(1))$ which cuts out $D(\sigma)\subset\PP(E_0)$. This defines the virtual structure sheaf
$$
\cO_{D(\sigma)}^{\vir}\=L\iota_0^*\;\cO_{C_{\widetilde\sigma}},
$$
where $\iota\_0\colon D(\sigma)\into q^*E_1(1)\big|_{D(\sigma)}$ is the zero section and
$$
C_{\widetilde\sigma}\ \subset\ q^*E_1(1)\big|_{D(\sigma)}
$$
is the cone over $D(\sigma)$ defined by $\widetilde\sigma$. This is the flat limit, as $t\to\infty$, of the graphs $\Gamma_{t\widetilde\sigma}\subset q^*E_1(1)$. Therefore, in $K$-theory,
\beq{eurostar}
j_*\;\cO_{D(\sigma)}^{\vir}\=L\iota^*\;\cO_{\Gamma_{\widetilde\sigma}},
\eeq
where $j\colon D(\sigma)\into\PP(E_0)$ and $\iota\colon\PP(E_0)\into q^*E_1(1)$ is the zero section.

Suppressing some obvious pullback maps for clarity, $\Gamma_{\widetilde\sigma}$ is cut out of the total space of $q^*E_1(1)$ by the section $\widetilde\sigma-\sigma_{\mathrm{taut}}$ of the pullback of $q^*E_1(1)$. This induces a Koszul resolution of $\cO_{\Gamma_{\widetilde\sigma}}$ on the total space of $q^*E_1(1)$. Applying $L\iota^*$ restricts this to the zero section, where $\sigma_{\mathrm{taut}}$ is zero. Thus the right hand side of \eqref{eurostar} is the Koszul complex $\(\Lambda\udot(q^*E_1(1))^*,\widetilde\sigma\)$ (which is not in general exact, since $\Gamma_{\widetilde\sigma}$ does not generally intersect the zero section transversally). Thus
\beq{thus}
j_*\;\cO_{D(\sigma)}^{\vir}\=\sum_{i=0}^{r\_1}(-1)^i\Lambda^i(q^*E_1^*)(-i),
\eeq
where $r_i:=\rk(E_i)$.

Finally we push down to $X$, using (by Serre duality)
$$
Rq_*\;\cO_{\PP(E_0)}(-i)\=\left\{\!\!\begin{array}{cl}\cO_X & i=0, \\
0& 0<i<r\_0, \\
\Sym^{i-r\_0}E_0\otimes\det E_0\;[1-r_0] & i\ge r\_0.
\end{array}\right.
$$
Thus, applying $Rq_*$ to \eqref{thus}, we find the pushforward of $\cO_{D(\sigma)}^{\vir}$ to $K(X)$ is
\begin{multline*}
\cO+\sum_{i=r\_0}^{r\_1}(-1)^{i+1-r\_0}\Lambda^i(E_1^*)\otimes\Sym^{i-r\_0}E_0\otimes\det E_0 \\
\=\cO-\Big(\sum_{i=r\_0}^{r\_1}(-1)^{i-r\_0}\Lambda^{r\_1-i}E_1\otimes\Sym^{i-r\_0}E_0\Big)\!\otimes\det E_0\otimes\det E_1^* \\
\!\!\!\!\=\cO-\Big(\sum_{j=0}^r(-1)^{j}\Lambda^{r-j}E_1\otimes\Sym^jE_0\Big)\!\otimes\det(E_0-E_1) \\
\=\cO-\bigwedge\!\!^r(E_1-E_0)\otimes\det(E_0-E_1),\hspace{37mm}
\end{multline*}
where $r=r\_1-r\_0=\rk(E_1-E_0)$.
\end{proof}

There are different formulae for (more general) $K$-theoretic degeneracy loci in \cite{HIMN, A}, essentially given by the Thom-Porteous formula with cohomological Chern classes replaced by $K$-theoretic Chern classes. By some algebraic identities these formulae are surely equivalent to the Eagon-Northcott formula above. Therefore, by Theorem \ref{3.3}, those formulae also describe the pushforward of the virtual structure sheaf of a degeneracy locus.

\subsection{Application to Vafa-Witten theory}
In \cite{GT1, GT2} it was shown how some of the monopole components of the Vafa-Witten $T$-fixed point set can be described as degeneracy loci, at the level of both their scheme structures and virtual cycles. We briefly review the simplest examples --- 2-step nested punctual Hilbert schemes of a smooth projective surface $S$,
$$
S^{[n_1,n_2]}\ :=\ \big\{I_1\subseteq I_2\subseteq\cO_S\ \colon\ \mathrm{length}\;(\cO_S/I_i)=n_i\big\}.
$$
For more details and more general results see \cite{GT1, GT2}.

While $S^{[n_1,n_2]}$ is in general singular, it lies in the smooth ambient space $S^{[n_1]}\times S^{[n_2]}$ as the set of points $(I_1,I_2)$ for which there is a nonzero map $\Hom_S(I_1,I_2)\ne0$.
To write this scheme theoretically, let $\pi\colon S^{[n_1]}\times S^{[n_2]}\times S\to S^{[n_1]}\times S^{[n_2]}$ be the projection down $S$, and let $\cI_1,\cI_2$ denote the (pullbacks of the) universal ideal sheaves on $S^{[n_1]}\times S^{[n_2]}\times S$. Consider the complex of vector bundles
\beq{complex}
R\hom_\pi(\cI_1,\cI_2) \quad\mathrm{over}\quad S^{[n_1]}\times S^{[n_2]},
\eeq
which, restricted to a point $(I_1,I_2)$, computes $\Ext^*_S(I_1,I_2)$. When $p_g(S)=0$ this complex can be made 2-term,\footnote{When $p_g(S)>0$ it is shown in \cite[Section 6]{GT1} how to remove $H^2(\cO_S)$ from the complex to give the same result.}
$$
R\hom_\pi(\cI_1,\cI_2)\quad\simeq\quad E_0\rt\sigma E_1.
$$
Then the degeneracy locus $D(\sigma)$ is (scheme-theoretically) $S^{[n_1,n_2]}$, and the construction \eqref{compo} endows it with a perfect obstruction theory of dimension $n_1+n_2$ which is independent of the choices of $E_0,E_1$ (depending only on the $K$-theory class of their difference $R\hom_\pi(\cI_1,\cI_2)$.)

By \cite[Section 10]{GT1} this perfect obstruction theory and the Vafa-Witten perfect obstruction theory of \cite{TT1} have virtual tangent bundles which agree in $K$-theory.\footnote{It is shown in \cite[Section 10]{GT1} that the two perfect obstruction theories $E\udot\to\LL_{S^{[n_1,n_2]}}$ have isomorphic virtual cotangent bundles $E\udot$, but not that their maps to the cotangent complex $\LL$ are the same (though they surely are).}
Therefore the degeneracy locus virtual cycle coincides which the one coming from Vafa-Witten theory \cite{TT1} or reduced DT theory \cite{GSY1, GSY2} when $h^1(\cO_S)=0$. And when $h^1(\cO_S)>0$, the complex $R\hom_\pi(\cI_1,\cI_2)$ can be modified \cite[Section 6]{GT1}, or one can work relative to $\Pic(S)$ \cite[Section 9]{GT1}, to get the same result.

The Thom-Porteous formula \eqref{Tpo} then calculates the pushforward to $S^{[n_1]}\times S^{[n_2]}$ of the Vafa-Witten virtual cycle on $S^{[n_1,n_2]}$. In fact \cite{GT1, GT2} deal with more complicated nested Hilbert schemes of points and curves on $S$. For points, the result is the following.

\begin{thm}\cite{GT1}
The pushforward of $\big[S^{[n_1,n_2,\ldots,n_r]}\big]^{\vir}$ to $S^{[n_1]}\times\cdots\times S^{[n_r]}$ equals the product of Carlsson-Okounkov classes
$$
c_{n_1+n_2}\big(R\hom_{\pi}(\cI_1,\cI_2)[1]\big)\cup\cdots\cup
c_{n_{r-1}+n_r}\big(R\hom_{\pi}(\cI_{r-1},\cI_r)[1]\big)
$$
in $A_{n_1+n_r}\big(S^{[n_1]}\times\cdots\times S^{[n_r]}\big)$.
\end{thm}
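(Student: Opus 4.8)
The plan is to realise $N:=S^{[n_1,n_2,\ldots,n_r]}$ as an iterated degeneracy locus inside the smooth product $X:=S^{[n_1]}\times\cdots\times S^{[n_r]}$, following \cite{GT1}, and then to run the Thom--Porteous pushforward \eqref{Tpo} once for each consecutive pair $(n_i,n_{i+1})$, observing that the projective-bundle factors decouple so that the individual Thom--Porteous classes simply multiply.

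First I would set up the global model. Over $X$ (suppressing pullbacks) each complex $R\hom_\pi(\cI_i,\cI_{i+1})$ is represented, after the modifications of \cite[Sections 6, 9]{GT1} when $p_g(S)>0$ or $h^{0,1}(S)>0$, by a two-term complex $E_0^{(i)}\rt{\sigma_i}E_1^{(i)}$; by Riemann--Roch for $\chi(I_i,I_{i+1})$ one has $\rk E_1^{(i)}-\rk E_0^{(i)}=n_i+n_{i+1}-1$, so the $i$-th degeneracy condition has expected codimension $n_i+n_{i+1}$, and the total $\sum_{i=1}^{r-1}(n_i+n_{i+1})$ matches $\dim X-(n_1+n_r)$. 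On the fibre product of projective bundles $Y:=\PP(E_0^{(1)})\times_X\cdots\times_X\PP(E_0^{(r-1)})$ the tautological sublines, composed with the pullbacks of the $\sigma_i$, assemble into a section of $\bigoplus_{i=1}^{r-1}E_1^{(i)}\otimes\cO_{\PP(E_0^{(i)})}(1)$, whose zero scheme $Z$ is identified in \cite{GT1} with $N$: the forgetful map $Z\to X$ is an isomorphism onto $S^{[n_1,\ldots,n_r]}$ (in the simple situation $\dim\Hom(I_i,I_{i+1})\le1$ of the theorem; the general case uses the wider class of degeneracy loci of \cite{GT1, GT2}), and the perfect obstruction theory it induces via \eqref{compo} is the one of \cite{GT1}, which by \cite[Section 10]{GT1} has the same virtual tangent bundle --- hence the same virtual cycle --- as the Vafa--Witten one. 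Being the virtual cycle cut out by a section of a direct-sum bundle over the smooth $Y$, it satisfies
$$
\jmath_*\;[N]^{\vir}\=\prod_{i=1}^{r-1}c_{\rk E_1^{(i)}}\big(E_1^{(i)}\otimes\cO_{\PP(E_0^{(i)})}(1)\big)\cap[Y],
$$
where $\jmath\colon N\into Y$.

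Next I would push $Y$ down to $X$ one projective-bundle factor at a time. Pushing down the $i$-th factor $\PP(E_0^{(i)})$ --- by the projective-bundle formula, expanding $c_{\rk E_1^{(i)}}\big(E_1^{(i)}\otimes\cO_{\PP(E_0^{(i)})}(1)\big)$ in powers of $c_1(\cO_{\PP(E_0^{(i)})}(1))$ and applying the Segre-class pushforward --- reproduces exactly the Thom--Porteous class of \eqref{Tpo} for the pair $(n_i,n_{i+1})$, turning that factor into $\big[c(E_1^{(i)})\,c(E_0^{(i)})^{-1}\big]_{n_i+n_{i+1}}=c_{n_i+n_{i+1}}\big(E_1^{(i)}-E_0^{(i)}\big)$; and by the projection formula this pushforward passes freely through the Chern classes of the other factors, which are pulled back along it. Iterating over $i=1,\ldots,r-1$ therefore yields
$$
\pi_*\;\jmath_*\;[N]^{\vir}\=\prod_{i=1}^{r-1}c_{n_i+n_{i+1}}\big(E_1^{(i)}-E_0^{(i)}\big)\cap[X],
$$
and since $E_1^{(i)}-E_0^{(i)}=R\hom_\pi(\cI_i,\cI_{i+1})[1]$ in $K_0(X)$ this is the asserted product of Carlsson--Okounkov classes.

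The Chern-class bookkeeping above --- iterated Thom--Porteous, together with the only genuinely delicate internal check, namely that the exponent ranges never call for a negative-degree Segre class, which is precisely the codimension identity $\rk E_1^{(i)}-\rk E_0^{(i)}+1=n_i+n_{i+1}$ --- is routine, and in cohomology there is no Todd correction. The substantive content that must be imported rather than reproved is geometric, and is the work of \cite{GT1, GT2}: that $Z$ carries exactly the (generally non-reduced) scheme structure of $S^{[n_1,\ldots,n_r]}$, and that the obstruction theory coming from \eqref{compo} on this iterated model agrees, at least up to quasi-isomorphism of virtual tangent bundles, with the Vafa--Witten obstruction theory. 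Granting these two inputs, the theorem follows. The rest of Section \ref{degensec} then replaces each Thom--Porteous pushdown here by an Eagon--Northcott resolution, as in Theorem \ref{3.3}, to obtain the $K$-theoretic refinement.
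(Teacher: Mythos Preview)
The paper does not prove this theorem: it is quoted from \cite{GT1} with no argument given, and the text immediately moves on to the $K$-theoretic analogue. So there is no proof here to compare your proposal against.

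That said, your sketch is a faithful outline of the argument in \cite{GT1}: model $S^{[n_1,\ldots,n_r]}$ as the zero locus of a direct-sum section on a fibre product of projective bundles over $S^{[n_1]}\times\cdots\times S^{[n_r]}$, then push down factor by factor, each time recovering a Thom--Porteous class via the Segre-class identity. You are also right that the genuinely nontrivial inputs --- the scheme-theoretic identification of the zero locus with the nested Hilbert scheme, and the matching of the degeneracy-locus obstruction theory with the Vafa--Witten one --- are imported from \cite{GT1, GT2} rather than reproved. Your proposal is therefore correct as a summary of what \cite{GT1} does, and consistent with how the present paper uses that result.
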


For the $K$-theoretic analogue, we assume for simplicity that $H^{\ge1}(\cO_S)=0$ so we do not have to modify the complex \eqref{complex}.
Theorem 3.2 of \cite{Th} gives a formula for any virtual structure sheaf $\cO^{\vir}_M$ which shows that it depends only on $M$ and the $K$-theory class of the virtual tangent bundle $T^{\vir}_M$. Since the degeneracy locus construction induces the same virtual tangent bundle on $S^{[n_1,n_2]}$ as Vafa-Witten theory, the two virtual structure sheaves induced by \eqref{vss} are equal.

Therefore Theorem \ref{3.3} describes the Vafa-Witten $K$-theoretic virtual cycle as follows. 

\begin{cor} When $H^{\ge1}(\cO_S)=0$ 
the pushforward of the Vafa-Witten virtual structure sheaf $\cO_{S^{[n_1,n_2]}}^{\vir}$ to $S^{[n_1]}\times S^{[n_2]}$ is the $K$-theory class of
$$
\cO_{S^{[n_1]}\times S^{[n_2]}}-\det\!\(R\hom_\pi(\cI_1,\cI_2)\)\otimes\bigwedge\!\!^r\(R\hom_\pi(\cI_1,\cI_2)[1]\),
$$
where $r=n_1+n_2-1$. Letting $\mathsf{CO}:=\cO-R\hom_\pi(\cI_1,\cI_2)$, this can be written --- in topological $K$-theory at least --- as
$$
\Lambda\udot(\mathsf{CO}^*)\=\sum_{i=0}^{n_1+n_2}(-1)^i\Lambda^i(\mathsf{CO}^*).
$$
\end{cor}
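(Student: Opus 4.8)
The plan is to read the Corollary off from Theorem \ref{3.3} once the Vafa--Witten virtual structure sheaf has been matched with a degeneracy-locus one. Using $H^{\ge1}(\cO_S)=0$ I would first present $R\hom_\pi(\cI_1,\cI_2)$ globally as a two-term complex $E_0\rt\sigma E_1$ of vector bundles on $X:=S^{[n_1]}\times S^{[n_2]}$; by \cite{GT1} the degeneracy locus $D(\sigma)$ is $S^{[n_1,n_2]}$ scheme-theoretically, carrying the perfect obstruction theory \eqref{compo}, whose virtual tangent bundle agrees in $K$-theory with the Vafa--Witten one by \cite[Section 10]{GT1}. Since by \cite[Theorem 3.2]{Th} the virtual structure sheaf \eqref{vss} depends only on the scheme and the $K$-class of its virtual tangent bundle, the two virtual structure sheaves on $S^{[n_1,n_2]}$ coincide, so it is enough to compute $\iota_*\cO^{\vir}_{D(\sigma)}$.

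Theorem \ref{3.3} gives $\iota_*\cO^{\vir}_{D(\sigma)}=\cO_X-\det(E_0-E_1)\otimes\bigwedge^{\,r}(E_1-E_0)$ with $r=\rk E_1-\rk E_0$. As $[E_0]-[E_1]=[R\hom_\pi(\cI_1,\cI_2)]$ in $K^0(X)$, we have $\det(E_0-E_1)=\det R\hom_\pi(\cI_1,\cI_2)$ and $[E_1]-[E_0]=[R\hom_\pi(\cI_1,\cI_2)[1]]$, which is the first displayed formula once $r$ is identified. For that I would compute $r=-\chi_S(I_1,I_2)$ by Hirzebruch--Riemann--Roch on $S$: with $\ch(I_i)=1-n_i[\pt]$ and $\Td(S)=1+\tfrac12c_1(S)+\chi(\cO_S)[\pt]$, $\chi(\cO_S)=1$, one finds $\chi_S(I_1,I_2)=1-n_1-n_2$, hence $r=n_1+n_2-1$. (As a sanity check $\vd(S^{[n_1,n_2]})=\dim X-(r+1)=2(n_1+n_2)-(n_1+n_2)=n_1+n_2$, as in \cite{GT1}.)

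For the reformulation, set $N:=n_1+n_2=\rk\mathsf{CO}$ and use that $\mathsf{CO}=\cO-R\hom_\pi(\cI_1,\cI_2)$ is --- in topological $K$-theory --- represented by a genuine rank-$N$ bundle (the Carlsson--Okounkov bundle of \cite{CO}), so that $\Lambda^i(\mathsf{CO}^*)=0$ for $i>N$ and $\Lambda\udot(\mathsf{CO}^*)=\sum_{i=0}^N(-1)^i\Lambda^i(\mathsf{CO}^*)$ is unambiguous. The remaining identity $\sum_{i=0}^N(-1)^i\Lambda^i(\mathsf{CO}^*)=\cO_X-\det(E_0-E_1)\otimes\bigwedge^{\,r}(E_1-E_0)$ is then a splitting-principle calculation: writing the Chern roots of $\mathsf{CO}$ as $L_1,\dots,L_N$ one has $[E_0-E_1]=\cO-\sum_kL_k$, so $\det(E_0-E_1)=\bigotimes_kL_k^{-1}$ and $\bigwedge^{\,r}(E_1-E_0)=\lambda^{N-1}\big(\sum_kL_k-\cO\big)=\sum_{j=0}^{N-1}(-1)^{N-1-j}e_j(L_\bullet)$; multiplying and using $L_k\otimes L_k^{-1}=\cO$ turns this into $\sum_{j=0}^{N-1}(-1)^{N-1-j}e_{N-j}(L_\bullet^{-1})=-\sum_{i=1}^N(-1)^ie_i(L_\bullet^{-1})$, whence $\cO_X-\det(E_0-E_1)\otimes\bigwedge^{\,r}(E_1-E_0)=\sum_{i=0}^N(-1)^ie_i(L_\bullet^{-1})=\prod_k(1-L_k^{-1})=\Lambda\udot(\mathsf{CO}^*)$.

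I expect no serious obstacle beyond the two structural inputs and one caveat. The inputs are the $K$-theoretic equality of the Vafa--Witten and degeneracy-locus virtual tangent bundles on $S^{[n_1,n_2]}$ (known from \cite{GT1} only at the level of the complexes $E\udot$, upgraded to an equality of virtual structure sheaves by \cite[Theorem 3.2]{Th}). The caveat, which is exactly what the hedge ``in topological $K$-theory at least'' refers to, is that $\mathsf{CO}$ really is an honest rank-$(n_1+n_2)$ bundle, so that the truncated alternating sum $\sum_{i=0}^{n_1+n_2}(-1)^i\Lambda^i(\mathsf{CO}^*)$ genuinely computes $\Lambda\udot(\mathsf{CO}^*)$ and the splitting principle is legitimate; everything else is bookkeeping on top of Theorem \ref{3.3}.
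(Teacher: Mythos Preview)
Your proposal is correct and follows essentially the same route as the paper: apply Theorem~\ref{3.3} after identifying the Vafa--Witten and degeneracy-locus virtual structure sheaves via \cite[Section 10]{GT1} and \cite[Theorem 3.2]{Th}, then verify the Koszul rewriting algebraically. The only substantive difference is in that last step: the paper expands $\bigwedge^r(\mathsf{CO}-\cO)$ directly using $\bigwedge^r(A-B)=\sum_j(-1)^j\Lambda^{r-j}(A)\otimes\Sym^j(B)$, while you use the splitting principle; both give the same identity. One small point of precision: the source for $\mathsf{CO}$ being an honest bundle is \cite[Equation 4.27]{GT2} rather than \cite{CO}, and the bundle lives on an \emph{affine bundle} over $S^{[n_1]}\times S^{[n_2]}$, not on the base itself --- this is exactly why the passage to topological $K$-theory is needed (the affine bundle is only homotopy equivalent to the base).
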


For a general surface $S$, the techniques of \cite{GT1} can be used to split off $H^{\ge1}(\cO_S)$ from this complex. The upshot is the same result, except with the complex $R\hom_\pi(\cI_1,\cI_2)$ replaced by $R\hom_\pi(\cI_1,\cI_2)+R^1\pi_*\cO-R^2\pi_*\cO$.

\begin{proof}
Theorem \ref{3.3} gives the first result immediately. For the second we use \cite[Equation 4.27]{GT2}, which shows the Carlsson-Okounkov $K$-theory class $\mathsf{CO}:=\cO-R\hom_\pi(\cI_1,\cI_2)$ is represented by an honest rank $r+1=n_1+n_2$ vector bundle (instead of a difference of vector bundles) on an affine bundle over $S^{[n_1]}\times S^{[n_2]}$. Therefore, writing the pullback of
$R\hom_\pi(\cI_1,\cI_2)[1]$ as the difference of bundles $\mathsf{CO}-\cO$, we find
\begin{multline*}
\hspace{3mm} \cO-\det\!\(R\hom_\pi(\cI_1,\cI_2)\)\otimes\bigwedge\!\!^r\(R\hom_\pi(\cI_1,\cI_2)[1]\) \\
\=\cO-\det(\mathsf{CO}^*)\otimes\sum_{i=0}^r(-1)^i\Lambda^{r-i}(\mathsf{CO})\otimes\Sym^i\cO \\
\=\cO-\sum_{i=0}^r(-1)^i\Lambda^{i+1}(\mathsf{CO}^*)\=\Lambda\udot( \mathsf{CO}^*)\hspace{1cm}
\end{multline*}
upstairs on the affine bundle. Since this is homotopic to the base $S^{[n_1]}\times S^{[n_2]}$, the result follows in topological $K$-theory (which, by Riemann-Roch, is enough for computing holomorphic   Euler characteristics). Therefore in this situation we get a more familiar Koszul resolution description of the virtual structure sheaf.
\end{proof}

These results can be plugged into the definition \eqref{eqref} of refined Vafa-Witten invariants below to calculate monopole locus contributions in terms of $K$-theory classes on smooth ambient spaces like $S^{[n_1]}\times S^{[n_2]}$. (This requires lifts of the $K$-theory classes $N^{\vir}$ and $K_{\vir}^{1/2}$ \eqref{sqroot} from $S^{[n_1,n_2]}$ to $S^{[n_1]}\times S^{[n_2]}$; these exist since they can be written in terms of the universal sheaves $\cI_{\cZ_1},\,\cI_{\cZ_1}$ and $R\hom_\pi$\,s between them, all of which extend.)

\section{Refined Vafa-Witten invariants: stable case}\label{VWsec}
Fix a smooth complex poplarised surface $(S,\cO_S(1))$, a rank $r>0$, Chern classes $c_1,\,c_2$ and a line bundle $L$ on $S$ with $c_1(L)=c_1$. We use the notation from \cite{TT1}; in particular $\cN^\perp_{r,L,c_2}$ denotes the moduli space of Gieseker semistable Higgs pairs $(E,\phi)$ on $S$, where
$E$ is a rank $r$ torsion-free sheaf on $S$ with $c_2(E)=c_2$ and
$$
\det E\cong L, \qquad \phi\ \in\ \Hom(E,E\otimes K_S)\_0\;.
$$
By the spectral construction \cite[Section 2]{TT1}, $(E,\phi)$ is equivalent to a Gieseker semistable compactly supported torsion sheaf $\cE_\phi$ on
$$
X\ :=\ K_S\rt\rho S.
$$
This makes $\cN^\perp_{r,L,c_2}$ the moduli space of Gieseker semistable compactly supported torsion sheaves $\cE$ on $X$ with the right Chern classes, such that the ``centre of mass'' of $\cE$ on each $K_S$ fibre of $\rho\colon X\to S$ is zero, and $\rho_*\;\cE\cong L$.

When $(r,c_1,c_2)$ are chosen so that semistability implies stability\footnote{The general case is handled in \cite{TT2}.}, $\cN^\perp_{r,L,c_2}$ carries a natural symmetric perfect obstruction theory \cite[Theorem 6.1]{TT1} supported in degrees $[-1,0]$. It is noncompact in general, but has a $T=\C^*$ action scaling the fibres of $K_S$ (or, equivalently, scaling the Higgs field $\phi$). The fixed locus $(\cN^\perp_{r,L,c_2})^T$ is compact, so in \cite{TT1} Vafa-Witten invariants are defined by virtual (cohomological) localisation
\beq{defVW}
\mathsf{VW}_{r,c_1,c_2}\ :=\ \int_{\big[(\cN_{r,L,c_2}^\perp)^T\big]^{\vir\ }}\frac1{e(N^{\vir})}\ \in\ \Q,
\eeq
where $L$ is any line bundle on $S$ with $c_1(L)=c_1$. This lies in $\Q\subset\Q[t,t^{-1}]$ because $\vd(\cN^\perp_{r,L,c_2})=0$.

The perfect obstruction theory gives us a virtual structure sheaf, and its symmetry gives us a canonical square root of the virtual canonical bundle  by Proposition \ref{sqprop}. 

\begin{defn} \label{defref}
For $r,c_1,c_2$ such that semistability implies stability,
the refined Vafa-Witten invariants of $(S,\cO_S(1))$ are defined by \eqref{tloc}\emph{:}
\begin{eqnarray}\nonumber
\mathsf{VW}_{r,c_1,c_2}(t) &:=& \chi\_t\Big(\cN_{r,L,c_2}^\perp,\widehat\cO^{\vir}_{\cN_{r,L,c_2}^\perp}\Big) \\
&=& \chi\_t\!\left(\!(\cN_{r,L,c_2}^\perp)^T\!,\,\frac{\cO^{\vir}_{(\cN_{r,L,c_2}^\perp)^T}}{\Lambda\udot(N^{\vir})^\vee}\otimes K^{\frac12}_{\vir}\right)\,\in\ \Q(t^{1/2}). \label{eqref}
\end{eqnarray}
By Proposition \ref{refine} this refines \eqref{defVW}, specialising to $\mathsf{VW}_{r,c_1,c_2}$ at $t=1$.
\end{defn}

\noindent\textbf{First fixed locus.}
The invariant \eqref{defVW} and its refinement \eqref{eqref} have contributions from the connected components of the fixed locus $(\cN_{r,L,c_2}^\perp)^T$. The first of these is the ``instanton branch" where $\phi=0$.

This is just the moduli space $\cM_{r,L,c_2}$ of stable sheaves of fixed determinant $L$ on $S$. By \cite[Equation 7.3]{TT1} this locus contributes the Fantechi-G\"ottsche virtual signed Euler characteristic
\begin{align}
\int_{[\cM_{r,L,c_2}]^{\vir}}c\_{\vd(\cM_{r,L,c_2})}\(\LL^{\vir}_{\cM_{r,L,c_2}}\)& \nonumber \\ =\
(-1)^{\vd(\cM_{r,L,c_2})}&\int_{[\cM_{r,L,c_2}]^{\vir}}c\_{\vd(\cM_{r,L,c_2})}\(T^{\vir}_{\cM_{r,L,c_2}}\) \label{evir} \\ &\quad=\
(-1)^{\vd(\cM_{r,L,c_2})}e^{\vir}(\cM_{r,L,c_2}) \nonumber
\end{align}
of $\cM_{r,L,c_2}$ to $\mathsf{VW}_{r,c_1,c_2}$ \eqref{defVW}. It is an integer, with an obvious refinement given (up to a sign) by the virtual $\chi\_t$-genus of Fantechi-G\"ottsche \cite{FG}, as studied in \cite{GK1}. We check now that this is what the refined Vafa-Witten invariant \eqref{eqref} indeed gives.

An open neighbourhood in $\cN^\perp_{r,L,c_2}$ of the instanton branch $\cM_{r,L,c_2}$ is isomorphic to its own $(-1)$-shifted cotangent bundle $T^*[-1]\cM_{r,L,c_2}$. (It is the neighbourhood consisting of those pairs $(E,\phi)$ for which $E$ is itself Gieseker stable. At the level of points this says that the Higgs fields take values in the dual $(\Ext^2(E,E)_0)^*\cong\Hom(E,E\otimes K_S)$ of the obstruction space of $E$.) By \eqref{shifted}, then, its contribution to $\mathsf{VW}(t)$ \eqref{eqref} is
$$
(-1)^{\vd(\cM_{r,L,c_2})}t^{-\frac{\vd(\cM_{r,L,c_2})}2}\chi^{\vir}_{-t}\(\cM_{r,L,c_2}\).
$$
Specialising to $t=1$ gives
$$
(-1)^{\vd(\cM_{r,L,c_2})}e^{\vir}\(\cM_{r,L,c_2}\),
$$
which is \eqref{evir}.\medskip

\noindent\textbf{Second fixed locus.} The other fixed loci have nilpotent $\phi\ne0$; following \cite{DPS, GK1} we call their union $\cM_2$ the ``monopole branch". For $K_S$ negative in a suitable sense the stability condition forces the second fixed locus to be empty (a ``vanishing theorem" holds). So we do some very elementary preliminary calculations on $\cM_2$ for general type surfaces, refining the first calculations of \cite[Section 8]{TT1}.

\subsection{\for{toc}{Some calculations for $K_S>0$}\except{toc}{Some calculations for $\mathbf{K_S>0}$}}
Let $(S,\cO_S(1))$ be a smooth, connected polarised surface with
\begin{itemize}
\item $h^1(\cO_S)=0$, and
\item a smooth nonempty connected canonical divisor $C\in|K_S|$, such that
\item $L=\cO_S$ is the only line bundle satisfying $0\le\deg L\le\frac12\deg K_S$,
\end{itemize}
where degree is defined by $\deg L=c_1(L)\cdot c_1(\cO_S(1))$. Then in \cite[Lemma 8.3]{TT1} it is shown that $(\cN^\perp_{2,K_S,n})^T$ is the disjoint union of $\cM_{2,K_S,n}$ and the nested Hilbert schemes
\beq{nested}
\cM_2\ \cong\ \bigsqcup_{i=0}^{\lfloor n/2\rfloor}S^{[i,\,n-i]}
\eeq
of subschemes $Z_1\subseteq Z_0\subset S$ of lengths
$$
|Z_1|\ =\ i,\qquad |Z_0|+|Z_1|\ =\ n.
$$
We call the components with $i=0$ \emph{horizontal} and, at the other extreme, the components with $i=n/2$ \emph{vertical}.
\medskip

\noindent\textbf{Horizontal loci and $n\le1$ case.}
We start with the horizontal loci, so $Z_1=\emptyset$ and $\cM_2\cong S^{[n]}$. Here a point $Z_0\in S^{[n]}$ corresponds in $\cN^\perp_{2,K_S,n}$ to the torsion sheaf
$I_{Z_0\subset 2S}$ on $X=K_S$, where $2S\subset X$ is the twice-thickened zero section defined by the ideal $I_{S\subset X}^2$. In \cite[Section 8.2]{TT1} it is shown that the fixed obstruction bundle over this $S^{[n]}$ is $\(K_S^{[n]}\)^*$. It follows that
\beq{tvir}
T^{\vir}_{S^{[n]}}\=T_{S^{[n]}}-\(K_S^{[n]}\)^*.
\eeq

It also follows that $\cO^{\vir}_{S^{[n]}}=\Lambda\udot\(K_S^{[n]}\)$. In $K$-theory this has the same class as the sheaf of dgas given by inserting a differential,
\beq{kosz}
\cO_{S^{[n]}}\rt{s^{[n]}}K_S^{[n]}\rt{\wedge s^{[n]}}\Lambda^2K_S^{[n]}\rt{\wedge s^{[n]}}\ldots\rt{\wedge s^{[n]}}\Lambda^nK_S^{[n]},
\eeq
where $s\in H^0(K_S)$ cuts out the smooth connected canonical divisor $C\subset S$ and $s^{[n]}$ is the induced section of $K_S^{[n]}$ on $S^{[n]}$. Since this cuts out $C^{[n]}\subset S^{[n]}$ which is smooth of codimension $n$, \eqref{kosz} is an exact Koszul resolution quasi-isomorphic to its cokernel:
\beq{ovircalc}
\cO_{S^{[n]}}^{\vir}\=(-1)^n\bigg[\Lambda^n\(K_S^{[n]}\)\Big|_{C^{[n]}}\bigg]\ \in\ K^0\(S^{[n]}\).
\eeq
Moreover the $K$-theory class of the virtual normal bundle is computed in \cite[Section 8.3]{TT1} to be
\beq{Nvircalc}
\big[K_S^{[n]}\big]\t\ +\ 
(\t^2)^{\oplus P_2}\ -\ 
\big[(K_S^2)^{[n]}\big]\t^2\ -\ 
(\t^{-1})^{\oplus P_2}\ +\ 
\big[\big((K_S^2)^{[n]}\big)^*\big]\t^{-1}\ -\ 
\big[\Omega_{S^{[n]}}\big]\t.
\eeq
Here $$P_2\=h^0(K_S^2)\=p_g(S)+g,$$ where $g=c_1(K_S)^2+1$ is the genus of $C$.

Combining \eqref{Nvircalc} with \eqref{tvir} determines the virtual canonical bundle:
$$
K_{\vir}\=K_{S^{[n]}}^2\otimes\det\!\(K_S^{[n]}\)^{-2}\otimes\det\!\Big(\!\(K_S^2\)^{[n]}\Big)^2\otimes\t^{4n-3P_2}.
$$
The choice \eqref{sqroot} of square root is
\beq{Khalfcalc}
K^{\frac12}_{\vir}\ :=\ K_{S^{[n]}}\otimes\det\!\(K_S^{[n]}\)^*\otimes\det\!\Big(\!\(K_S^2\)^{[n]}\Big)\otimes\t^{2n-\frac32P_2}.
\eeq
With this, we are ready to calculate in the simplest, $n=0,1$ and $2$ cases. Even here the calculation will be somewhat involved.
\medskip

\noindent{\bf$n=0$ case.} Here $\cM_2\cong S^{[0]}$ is a reduced point, corresponding to the torsion sheaf $\cO_{2S}:=\cO_X/\cI_S^2$ on $X=K_S$. From above,
$$
(N^{\vir})^\vee\=(\t^{-2})^{\oplus P_2}-\t^{\oplus P_2}, \quad K_{\vir}^{\frac12}\=\t^{-\frac32P_2}
\quad\text{and}\quad \cO^{\vir}_{S^{[0]}}=\cO_{S^{[0]}}.
$$
Therefore we calculate the contribution of $S^{[0]}$ to \eqref{eqref} as
\beq{zero}
\chi\_t\=t^{-\frac32P_2}\frac{(1-t)^{P_2}}{(1-t^{-2})^{P_2}}
\=\frac{(-1)^{P_2}}{(t^{\frac12}+t^{-\frac12}\)^{P_2}}\=
\frac{(-1)^{P_2}}{[2]^{P_2}_t}\,,
\eeq
where $[2]\_t$ is the quantum integer \eqref{qq}.
\medskip

\noindent{\bf$n=1$ case.}
Combining \eqref{Khalfcalc}
$$
K_{\vir}^{\frac12}\=K_S^2\otimes\t^{2-\frac32P_2}
$$
with \eqref{ovircalc} we see the contribution of $\cM_2=S^{[1]}=S$ to \eqref{eqref} is
\beq{sofar}
-t^{2-\frac32P_2}\,\chi\_t\!\left(\!C,\left.\frac{K_S^3}{\Lambda\udot(N^{\vir})^\vee}\right|_C\right)\!,
\eeq
where, by \eqref{Nvircalc},
$$
(N^{\vir})^\vee\=K_S^*\t^{-1}\ +\ K_S^2\t\ +\ 
(\t^{-2})^{\oplus P_2}\ -\ 
K_S^{-2}\t^{-2}\ -\ T_S\t^{-1}\ -\ \t^{\oplus P_2}.
$$
Since $T_S|\_C=T_C\oplus\cO_C(C)$ in $K$-theory we can write
$$
(N^{\vir})^\vee\big|_C\=\sum_iL_i\t^{w_i}-\sum_iM_i\t^{v_i},
$$
where the $(L_i,w_i)$ are
\beq{Li}
\(K_S^*\big|_C\;,-1\),\ \(K_S^2\big|_C\;,1\) \mathrm{\ and\ }P_2\mathrm{\ copies\ of\ } (\cO_C,-2),
\eeq
and the $(M_i,v_i)$ are
\beq{Mi}
\(K_S^{-2}\big|_C\;,-2\),\ \(T_C,-1\),\ \(\cO_C(C),-1\) \mathrm{\ and\ }P_2\mathrm{\ copies\ of\ } (\cO_C,1).
\eeq
Therefore
\begin{eqnarray}
\ch\!\left(\!\frac1{\Lambda\udot(N^{\vir})^\vee|\_C\!}\right)\!\!\! &=&
\!\frac{\prod_i(1-e^{m_i+v_it})}{\prod_i(1-e^{\ell_i+w_it})}
\=\frac{\prod_i(1-(1+m_i)e^{v_it})}{\prod_i(1-(1+\ell_i)e^{w_it})} \nonumber \\
&=& \!\frac{\prod_i(1-e^{v_it})}{\prod_i(1-e^{w_it})}\cdot\frac{\prod_i\Big(1-m_i\frac{e^{v_it}}{1-e^{v_it}}\Big)}{\prod_i\Big(1-\ell_i\frac{e^{w_it}}{1-e^{w_it}}\Big)} \nonumber \\
&=& \!\frac{\prod_i(1-e^{v_it})}{\prod_i(1-e^{w_it})}\prod_i\!\left(\!1-m_i\frac{e^{v_it}}{1-e^{v_it}}\!\right)\!\prod_i\!\left(\!1+\ell_i\frac{e^{w_it}}{1-e^{w_it}}\!\right)\!, \label{lm}
\end{eqnarray}
where $m_i:=c_1(M_i),\ \ell_i:=c_1(L_i)$ and we have repeatedly used the fact that $m_i^2=0=\ell_i^2$ on the curve $C$.

Multiplying out expresses $\ch\!\(K_S^3\big/\Lambda\udot(N^{\vir})^\vee\big|_C\)\Td(C)$ as
$$
\Big(\!1+c_1(K_S^3)+\Td_1(C)\!\Big)\!\frac{\prod_i(1-e^{v_it})}{\prod_i(1-e^{w_it})}\!\left(\!1+\sum_i\ell_i\frac{e^{w_it}}{1-e^{w_it}}-\sum_im_i\frac{e^{v_it}}{1-e^{v_it}}\!\right)\!.
$$
Integrating over $C$ gives $\chi\_{e^t}$. Then replacing $e^t$ by $t$ gives $\chi\_t$ as
$$
\frac{\prod_i(1-t^{v_i})}{\prod_i(1-t^{w_i})}
\!\left(\!\deg K_S^3\big|_C+1-g+\sum_i\int_C\!\ell_i\,\frac{t^{w_i}}{1-t^{w_i}}-\sum_i\int_C\!m_i\,\frac{t^{v_i}}{1-t^{v_i}}\!\right)\!.
$$
Substituting \eqref{Li}, \eqref{Mi} and using $\deg K_S|\_C=g-1=\deg\cO_C(C)$ (by adjunction) gives
\begin{align*}
\chi\_t\=&\,\frac{(1-t^{-2})(1-t^{-1})^2}{(1-t^{-1})(1-t)}\cdot\frac{(1-t)^{P_2}}{(1-t^{-2})^{P_2}}\bigg[2g-2+(1-g)\frac{t^{-1}}{1-t^{-1}} \\
+(2g&-2)\frac{t}{1-t}-(2-2g)\frac{t^{-2}}{1-t^{-2}}-(2-2g)\frac{t^{-1}}{1-t^{-1}}-(g-1)\frac{t^{-1}}{1-t^{-1}}\bigg]\\
\=&\left(\frac{-t^2}{1+t}\right)^{\!\!P_2}\frac{2g-2}{t^2}\,.
\end{align*}
Plugging this into \eqref{sofar} gives the contribution of $\cM_2$ to $\mathsf{VW}_{2,K_S,1}(t)$ as
\beq{one}
-t^{2-\frac32P_2}\,\chi\_t\=\frac{(-1)^{P_2}(2-2g)}{\({t^{\frac12}+t^{-\frac12}}\)^{P_2}}\=\frac{(-1)^{P_2}(2-2g)}{[2]_t^{P_2}}\,.
\eeq
\medskip

\noindent\textbf{Horizontal $n=2$ case; preliminaries.}
To calculate on $C^{[2]}$ we fix some notation and collect some results. Let
$$
\xymatrix@=15pt{
\hspace{11mm}\cZ\ \subset\ C^{[2]}\times C \ar[d]<-3.2ex>^q \\
\ C^{[2]}\hspace{1cm}}
$$
denote the universal length-2 subscheme over $C^{[2]}$, with projection $p$ to $C$. Then $\cZ\cong C\times C$ with $p$ the projection to the \emph{first} factor, while the above double cover $q\colon C\times C\to C^{[2]}$ is the quotient by the $\Z/2$ action $\tau$ swapping the factors.

Given a line bundle $L$ on $C$, the induced rank 2 bundle $L^{[2]}=q_*p^*L$ on $C^{[2]}$ is therefore
$$
L^{[2]}\=q_*\(L\boxtimes\cO),
$$
so its pullback by $q^*$ sits inside an exact sequence
\beq{L2}
0\To\tau^*(L\boxtimes\cO)(-\Delta_C)\To q^*L^{[2]}\To L\boxtimes\cO\To0,
\eeq
where $\Delta_C\subset C\times C$ is the diagonal --- the branch divisor of $q$. But $\tau^*(L\boxtimes\cO)=\cO\boxtimes L$, so
\beq{detL2}
q^*\det L^{[2]}\ \cong\ L\boxtimes L(-\Delta_C).
\eeq

The exact sequence
$$
0\To q^*\Omega_{C^{[2]}}\To\Omega_{C\times C}\To\cO_{\Delta_C}(-\Delta_C)\To0,
$$
combined with $\Omega_{C\times C}\cong\Omega_C\boxtimes\cO_C\,\oplus\,\cO_C\boxtimes\Omega_C$ and the exact sequence
$$
0\To\Omega_C\boxtimes\cO_C(-\Delta_C)\To\Omega_C\boxtimes\cO_C\To\cO_{\Delta_C}(-\Delta_C)\To0,
$$
gives an equality in $K$-theory
\beq{OmegaC2}
q^*\Omega_{C^{[2]}}\=\Omega_C\boxtimes\cO_C(-\Delta_C)\ \oplus\ \cO_C\boxtimes\Omega_C\,.
\eeq
In particular,
\beq{KC2}
q^*K_{C^{[2]}}\= K_C\boxtimes K_C(-\Delta_C).
\eeq
As noted in \eqref{ovircalc}, $C^{[2]}\subset S^{[2]}$ is cut out by a transverse section $s^{[2]}$ of $K_S^{[2]}$, so has normal bundle $K_S^{[2]}\big|_{C^{[2]}}=(K_S|\_C)^{[2]}$ and determinant
$$
q^*\!\;\det N_{C^{[2]}/S^{[2]}}\=K_S\big|_C\boxtimes K_S\big|_C(-\Delta_C).
$$
Combining this with \eqref{KC2} gives
\beq{KS2}
q^*\!\left(K_{S^{[2]}}\big|_{C^{[2]}}\right)\ \cong\ K_S\big|_C\boxtimes K_S\big|_C.
\eeq
Substituting (\ref{KS2}, \ref{detL2}) into \eqref{Khalfcalc} we find
$$
q^*\Big(\!K^{\frac12}_{\vir}\big|_{C^{[2]}}\Big)\=K_C\boxtimes K_C\,\t^{4-\frac32P_2}.
$$
Tensoring this by \eqref{ovircalc} shows
\beq{Ohatcalc}
q^*\widehat\cO^{\;\vir}_{\!S^{[2]}}\ \cong\ K_S^3\big|_C\boxtimes K_S^3\big|_C(-\Delta_C)\,\t^{4-\frac32P_2},
\eeq
where, by an abuse of notation, we are considering $\widehat\cO^{\;\vir}_{\!S^{[2]}}$ to be a sheaf on $C^{[2]}$ (really, by \eqref{ovircalc}, it is the K-class of the pushforward of this to $S^{[2]}$).\medskip

\noindent\textbf{Horizontal $n=2$ case; calculation.}
We wish to calculate
\begin{eqnarray}\nonumber
\chi\_{e^t}\!\left(\!\frac{\widehat\cO^{\;\vir}_{\!S^{[2]}}}{\Lambda\udot(N^{\vir})^\vee}\!\right) &=&
\int_{C^{[2]}\ }\frac{\ch\!\(\widehat\cO^{\;\vir}_{\!S^{[2]}}\)}{\ch\!\(\Lambda\udot(N^{\vir})^\vee\)}\Td_{C^{[2]}} \\ \label{integr}
&=& \frac12\int_{C\times C\ }\frac{\ch\!\(q^*\widehat\cO^{\;\vir}_{\!S^{[2]}}\)}{\ch\!\(q^*\Lambda\udot(N^{\vir})^\vee\)}\Td\(q^*T_{C^{[2]}}\).
\end{eqnarray}
Let $k:=c_1(K_C)=-c_1(C)$ and $\chi:=2-2g=-\int_Ck=\Delta_C^2$\;. Putting $s:=e^t$, \eqref{Ohatcalc} gives
\begin{eqnarray} \nonumber
\ch\!\(q^*\widehat\cO^{\;\vir}_{\!S^{[2]}}\) &=& \exp\Big(\frac32k\boxtimes1+\frac32\boxtimes k-[\Delta_C]\Big)(e^t)^{4-\frac32P_2} \\
&=& \Big(1+\frac32k\boxtimes1\Big)\Big(1+\frac32\boxtimes k\Big)\Big(1-[\Delta_C]+\frac12\chi\vol\Big)s^{4-\frac32P_2} \nonumber \\
&=& \Big(1+\frac32k\boxtimes1+\frac32\boxtimes k-[\Delta_C]+\frac14\chi(9\chi+14)\vol\!\Big)s^{4-\frac32P_2}, \label{chvir}
\end{eqnarray}
where $\vol$ is the Poincar\'e dual of a point on $C\times C$. By \eqref{OmegaC2},
\beqa
\ch\!\(q^*T_{C^{[2]}}\) &=& \ch\(T_C\boxtimes\cO_C\)\ch(\cO(\Delta_C))+\ch\(\cO_C\boxtimes T_C\) \\
&=& (1-k\boxtimes1)\(1+[\Delta_C]+\frac12\chi\vol\)+(1-1\boxtimes k) \\
&=& 2-k\boxtimes1-1\boxtimes k+[\Delta_C]+\frac32\chi\vol,
\eeqa
from which we can deduce
\beq{Td}
\Td\!\(q^*T_{C^{[2]}}\)\=1-\frac12k\boxtimes1-\frac12\boxtimes k+\frac12[\Delta_C]+\frac14\chi(2+\chi)\vol.
\eeq
Multiplying \eqref{chvir} and \eqref{Td} makes $\ch\!\(q^*\widehat\cO^{\;\vir}_{\!S^{[2]}}\)
\Td\!\(q^*T_{C^{[2]}}\)$ equal to
\beq{chTd}
s^{4-\frac32P_2}\left(1+k\boxtimes1+1\boxtimes k-\frac12[\Delta_C]+(\chi+\chi^2)\vol\right).
\eeq

For the $K$-theory class of $q^*\(N^{\vir}|_{C^{[2]}}\)^\vee$ we use \eqref{Nvircalc}:
$$
q^*\!\;\(\!\;K_S^{[2]}\)^*\t^{-1}+
q^*(K_S^2)^{[2]}\t+(\t^{-2})^{\oplus P_2}-
q^*\!\;\((K_S^2)^{[2]}\)^*\t^{-2}-
q^*T_{S^{[2]}}\t^{-1}-\t^{\oplus P_2}.
$$
By \eqref{OmegaC2} and several applications of \eqref{L2} this is
\begin{align*}
&\cO_C\boxtimes K_S^{-1}(\Delta_C)\t^{-1}+
K_S^{-1}\boxtimes\cO_C\t^{-1}+
\cO_C\boxtimes K_S^2(-\Delta_C)\t+
K_S^2\boxtimes\cO_C\t \nonumber \\ \nonumber
&-\cO_C\boxtimes K_S^{-2}(\Delta_C)\t^{-2}-
K_S^{-2}\boxtimes\cO_C\t^{-2}
-T_C\boxtimes\cO_C(\Delta_C)\t^{-1}-\cO_C\boxtimes T_C\t^{-1} \end{align*}
\vspace{-8mm}
\beq{mili}
\hspace{2cm}-\cO_C\boxtimes K_S(-\Delta_C)\t^{-1}-K_S\boxtimes\cO_C\t^{-1}+(\t^{-2})^{\oplus P_2}-\t^{\oplus P_2}.
\eeq
Here we have suppressed some restrictions to $C$, which are easily handled using $K_S^2\big|_C\cong K_C$. As in the last section we write this as
$$
q^*(N^{\vir})^\vee\=\sum_iL_i\t^{w_i}-\sum_iM_i\t^{v_i}
$$
where $L_i,\,M_i$ are line bundles with first Chern classes $\ell_i,\,m_i$ respectively. Since $\ell_i^2,\,m_i^2$ needn't be zero\footnote{But we do use that $\l_i^3=0=m_i^3$.} on the surface $C\times C$, \eqref{lm} is modified to
\begin{align*}
\frac1{\ch\(q^*\Lambda\udot(N^{\vir})^\vee\)}\=&\,\frac{\prod_i(1-e^{v_it})}{\prod_i(1-e^{w_it})}\prod_i\!\left[1-\!\left(\!m_i+\frac{m_i^2}2\right)\!\frac{e^{v_it}}{1-e^{v_it}}\right]\times \\
&\qquad \prod_i\!\left[1+\!\left(\!\ell_i+\frac{\ell_i^2}2\right)\!\frac{e^{w_it}}{1-e^{w_it}}+\ell_i^2\frac{e^{2w_it}}{(1-e^{w_it})^2}\right]\!.
\end{align*}
Multiplying by $\frac12$\eqref{chTd} and integrating gives $\chi\_s$ \eqref{integr}. It is the product of
\beq{prefactor}
\frac12s^{4-\frac32P_2}\frac{\prod_i(1-s^{v_i})}{\prod_i(1-s^{w_i})}
\eeq
and the integral over $C\times C$ of
\begin{multline*}
\hspace{-2mm}\left[1+k\boxtimes1+1\boxtimes k-\frac12[\Delta_C]+(\chi+\chi^2)\vol\right]\times \\
\hspace{6mm}\prod_i\!\left[1-\!\left(\!m_i+\frac{m_i^2}2\right)\!\frac{s^{v_i}}{1-s^{v_i}}\right]
\prod_i\!\left[1+\!\left(\!\ell_i+\frac{\ell_i^2}2\right)\!\frac{s^{w_i}}{1-s^{w_i}}+\ell_i^2\frac{s^{2w_i}}{(1-s^{w_i})^2}\right]\!.
\end{multline*}
This integral is
\begin{multline*}
(\chi+\chi^2)-\sum_i\int_{C\times C}\frac{m_i^2}2\frac{s^{v_i}}{1-s^{v_i}}
+\sum_i\int_{C\times C}\frac{\ell_i^2}2\left(\frac{s^{w_i}}{1-s^{w_i}}+\frac{2s^{2w_i}}{(1-s^{w_i})^2}\right)\\ +\sum_{i<j}\int_{C\times C}m_im_j\frac{s^{v_i+v_j}}{(1-s^{v_i})(1-s^{v_j})}
+\sum_{i<j}\int_{C\times C}\ell_i\ell_j\frac{s^{w_i+w_j}}{(1-s^{w_i})(1-s^{w_j})} \\ -\sum_{i,j}\int_{C\times C}m_i\ell_j\frac{s^{v_i+w_j}}{(1-s^{v_i})(1-s^{w_j})} \\ +\int_{C\times C}\Big(k\boxtimes1+1\boxtimes k-\frac12[\Delta_C]\Big)\left(\sum_i\ell_i\frac{s^{w_i}}{1-s^{w_i}}-\sum_im_i\frac{s^{v_i}}{1-s^{v_i}}\right)\!.
\end{multline*}
From \eqref{mili} we read off the $(\ell_i,w_i)$,
$$
\Big(\!-\frac12\boxtimes k+[\Delta_C],-1\Big),\ \Big(\!-\frac12k\boxtimes1,-1\Big),\ 
\(1\boxtimes k-[\Delta_C],1\),\ (k\boxtimes1,1)
$$
and $P_2$ copies of $(0,-2)$. Similarly the $(m_i,v_i)$ are
\begin{multline*}
(-1\boxtimes k+[\Delta_C],-2),\ (-k\boxtimes1,-2),\ 
(-k\boxtimes1+[\Delta_C],-1),\ (-1\boxtimes k,-1), \\
\Big(\frac12\boxtimes k-[\Delta_C],-1\Big),\ \Big(\frac12k\boxtimes1,-1\Big)
\end{multline*}
and $P_2$ copies of $(0,1)$. Substituting these into the integral gives
%\begin{multline*}
%\chi\!\left(\frac32\chi+2\right)
%-3\chi\frac{s^{-2}}{1-s^{-2}}-3\chi\frac{s^{-1}}{1-s^{-1}}
%+2\chi\left(\frac{s^{-1}}{1-s^{-1}}+\frac{2s^{-2}}{(1-s^{-1})^2}\right) \\ +3\chi\left(\frac{s}{1-s}+\frac{2s^2}{(1-s)^2}\right)
%+2\chi\frac{s^{-4}}{(1-s^{-2})^2}+6\chi\frac{s^{-3}}{(1-s^{-2})(1-s^{-1})} \\
%+2\chi\frac{s^{-3}}{(1-s^{-2})(1-s^{-1})}
%+2\(\chi^2+\chi+\chi^2\)\frac{s^{-3}}{(1-s^{-2})(1-s^{-1})} \\
%+2\(\chi^2+\chi\)\frac{s^{-2}}{(1-s^{-1})^2}
%+\chi\frac{s^{-2}}{(1-s^{-1})^2}-8\chi\frac1{(1-s^{-1})(1-s)} \\
%+2\chi\frac{s^2}{(1-s)^2}
%-8\chi\frac{s^{-3}}{(1-s^{-2})(1-s^{-1})}-(8\chi+2\chi^2)\frac{s^{-2}}{(1-s^{-1})^2}\\
%+10\chi\frac{s^{-1}}{(1-s)(1-s^{-2})}
%+(10\chi+4\chi^2)\frac1{(1-s)(1-s^{-1})} \\
%+(\chi+3\chi^2)\frac{s^{-1}}{1-s^{-1}}+(7\chi+2\chi^2)\frac{s}{1-s}+(7\chi+2\chi^2)\frac{s^{-2}}{1-s^{-2}}\,.
%\end{multline*}
%This simplifies to
$$
\frac1{(1-s^2)^2}\Big[s^2\chi^2+(2s^3+5s^2+2s)\chi\Big].
$$
The prefactor \eqref{prefactor} is
\beqa
\frac12s^{4-\frac32P_2}\frac{\prod_i(1-s^{v_i})}{\prod_i(1-s^{w_i})} &=&
\frac12s^{4-\frac32P_2}\frac{(1-s^{-2})^2(1-s^{-1})^4(1-s)^{P_2}}{(1-s^{-1})^2(1-s)^2(1-s^{-2})^{P_2}} \\
&=& (-1)^{P_2}\frac{s^{-2}(1-s^2)^2}{2(s^{\frac12}+s^{-\frac12})^{P_2}}
\eeqa
Therefore, replacing $s$ by $t$, we find the contribution of $S^{[2]}$ to the refined Vafa-Witten invariant is
\beq{n2vert}
(-1)^{P_2\,}\frac{\chi^2+(2t+5+2t^{-1})\chi}{2[2]_t^{P_2}}\,.
\eeq
At $t=1$ this gives $(-2)^{-P_2}(1-g)(11-2g)$, as in \cite[Equation 8.24]{TT1}. \medskip

\noindent\textbf{Vertical $n=2$ case.} When $n=2$ there is another component of the $T$-fixed locus, given by taking $i=1$ in \eqref{nested}. This gives a copy of $S$, where $x\in S$ corresponds to the sheaf
$$
\(\rho^*I_x\)\otimes\cO_{2S}\ \ \mathrm{on\ }X.
$$
In \cite[Section 8.7]{TT1} it is shown that this $T$-fixed moduli space has vanishing obstruction sheaf, so that
$$
\cO^{\vir}_S\=\cO_S,
$$
and virtual normal bundle
$$
N^{\vir}\=T_S\!\otimes\!K_S^{-1}\;\t^{-1}\,\oplus\,
H^0(K_S^2)\;\t^2-\Big[\Omega_S\;\t\,\oplus\,T_S\!\otimes\!K_S^2\;\t^2\,\oplus\,H^0(K_S^2)^*\;\t^{-1}\Big].
$$
In particular
$$
K_{\vir}\big|_S\=K_S\otimes\!\(\!\;K_S\otimes K_S^2\;\t^2\)\!\otimes\!\(\t^{-2P_2}\)\!\otimes\!\(\!\;K_S\;\t^2\)\!\otimes\!\(\!\;K_S^{-1}\otimes K_S^4\;\t^4\)\!\otimes\!\(\t^{-P_2}\)
$$
whose square root \eqref{sqroot} is
$$
K_{\vir}^{\frac12}\big|_S\=K_S^4\;\t^{4-\frac32P_2}.
$$
So the contribution of this component to the refined Vafa-Witten invariant is, by \eqref{chiloc},
\begin{align*}
t^{4-\frac32P_2}&\chi\_t\!\left(\!S,\frac{K_S^4}{\Lambda\udot(N^{\vir})^\vee}\right) \\
\=&t^{4-\frac32P_2}\frac{(1-t)^{P_2}}{(1-t^{-2})^{P_2}}\,\chi\_t\!\left(\!S,\frac{\Lambda\udot(T_S\;\t^{-1})\otimes\Lambda\udot(\Omega_S\otimes K_S^{-2}\;\t^{-2})\otimes K_S^4}{\Lambda\udot(\Omega_S\otimes K_S\;\t)}\right) \\
\=&\frac{(-1)^{P_2}t^{-2}}{(t^{-\frac12}+t^{\frac12})^{P_2}}\,\chi\_t\!\left(\!S,\frac{\Lambda\udot(T_S\;\t^{-1})\otimes K_S\;\t^2\otimes\Lambda\udot(\Omega_S\otimes K_S^{-2}\;\t^{-2})\otimes K_S^3\;\t^4}{\Lambda\udot(\Omega_S\otimes K_S\;\t)}\!\right) \\
\=&\frac{(-1)^{P_2}t^{-2}}{[2]_t^{P_2}}\,\chi\_t\!\left(\!S,\frac{\Lambda\udot(\Omega_S\;\t)\otimes\Lambda\udot(\Omega_S\otimes K_S\;\t^2)}{\Lambda\udot(\Omega_S\otimes K_S\;\t)}\right)\!.
\end{align*}
Using the canonical section $s$ with zero locus $C$ we see that in $K$-theory,
\beqa
\Omega_S\otimes K_S\;\t-\Omega_S\;\t &=& \(\Omega_S\otimes K_S\)\big|_C\;\t\=K_S\otimes\(\Omega_C\oplus K_S^{-1}\big|_C\)\t \\
&=& K_S^3\big|_C\;\t\ \oplus\ \cO_C\;\t\=K_S^3\;\t-K_S^2\;\t
+\cO_S\;\t-K_S^{-1}\;\t.
\eeqa
Therefore
$$
\frac{\Lambda\udot(\Omega_S\;\t)}{\Lambda\udot(\Omega_S\otimes K_S\;\t)}\=
\frac{\(\cO_S-K_S^2\t\)\(\cO_S-K_S^{-1}\t\)}{\(\cO_S-K_S^3\t\)\(\cO_S-\t\)}\,,
$$
while
\beqa
\Lambda\udot(\Omega_S\otimes K_S\;\t^2) &=&
\Lambda\udot(\Omega_S\;\t^2)\Lambda\udot\((\Omega_S\otimes K_S)\big|_C\;\t^2\) \\ &=&
\Lambda\udot(\Omega_S\;\t^2)\frac{\(\cO_S-K_S^3\t^2\)\(\cO_S-\t^2\)}{\(\cO_S-K_S^2\t^2\)\(\cO_S-K_S^{-1}\t^2\)}\,.
\eeqa
Putting it all together gives
\beq{together}
%t^{4-\frac32P_2}\chi\_t\!\left(\!S,\frac{K_S^4}{\Lambda\udot(N^{\vir})^\vee}\right)\=
\frac{t^{-2}(1+t)}{(-[2]\_t)^{P_2}}\ \chi\_t\!\left(\!
\frac{\(\cO_S-K_S^2\t\)\(\cO_S-K_S^{-1}\t\)\Lambda\udot(\Omega_S\;\t^2)\(\cO_S-K_S^3\t^2\)}{\(\cO_S-K_S^3\t\)\(\cO_S-K_S^2\t^2\)\(\cO_S-K_S^{-1}\t^2\)}\right)\!.
\eeq
By Riemann-Roch $\chi\_{e^t}$ is
$$
\int_S\frac{\ch\big[\(\cO_S-K_S^2\t\)\(\cO_S-K_S^{-1}\t\)\(\cO_S-K_S^3\t^2\)\big]}{\ch\big[\(\cO_S-K_S^3\t\)\(\cO_S-K_S^2\t^2\)\(\cO_S-K_S^{-1}\t^2\)\big]}\ch\!\(\Lambda\udot(\Omega_S\;\t^2)\)\Td_S.
$$
Let $\kappa:=c_1(K_S)=-c_1(S)$ and $c_2:=c_2(S)$, this is
$$
\int_S\frac{\(1-e^{2\kappa}e^t\)\(1-e^{-\kappa}e^t\)\(1-e^{3\kappa}e^{2t}\)}{\(1-e^{2\kappa}e^{2t}\)\(1-e^{-\kappa}e^{2t}\)\(1-e^{3\kappa}e^t\)}\(1-\ch(\Omega_S)e^{2t}+e^{\kappa}e^{4t}\)\!\Td_S.
$$
By \eqref{together}, therefore, the vertical contribution is
\beq{form}
\frac{t^{-2}(1+t)}{(-[2]\_t)^{P_2}}\int_S\frac{\(1-e^{2\kappa}t\)\(1-e^{-\kappa}t\)\(1-e^{3\kappa}t^2\)}{\(1-e^{2\kappa}t^2\)\(1-e^{-\kappa}t^2\)\(1-e^{3\kappa}t\)}\(1-\ch(\Omega_S)t^2+e^{\kappa}t^4\)\!\Td_S.
\eeq
When $\alpha^3=0$ we compute
\beqa
\frac{1-e^{\alpha}t}{1-e^{\alpha}t^2} &=& \frac{(1-t)\Big(1-\frac t{1-t}\alpha-\frac t{2(1-t)}\alpha^2\Big)}{(1-t^2)\left(1-\frac{t^2}{1-t^2}\alpha-\frac{t^2}{2(1-t^2)}\alpha^2\right)}\\ &=& \frac1{1+t}\left(1-\frac t{1-t}\alpha-\frac t{2(1-t)}\alpha^2\right) \\ && \hspace{1cm}\times\left(1+\frac{t^2}{1-t^2}\alpha+\frac{t^2}{2(1-t^2)}\alpha^2+\frac{t^4}{(1-t^2)^2}\alpha^2\right) \\
&=& \frac1{1+t}\left(1-\frac t{1-t^2}\alpha-\frac{t(1+t^2)}{2(1-t^2)^2}\alpha^2\right)\!.
\eeqa
Similarly
$$
\frac{1-e^{\alpha}t^2}{1-e^{\alpha}t}\=(1+t)\left(1+\frac t{1-t^2}\alpha+\frac{t}{2(1-t)^2}\alpha^2\right)\!.
$$
Multiplying these gives
$$
\frac{\(1-e^{2\kappa}t\)\(1-e^{-\kappa}t\)\(1-e^{3\kappa}t^2\)}{\(1-e^{2\kappa}t^2\)\(1-e^{-\kappa}t^2\)\(1-e^{3\kappa}t\)}\=
\frac1{1+t}\!\left(\!1+\frac{2t}{1-t^2}\kappa+\frac{2t}{(1-t)^2}\kappa^2\!\right)\!.
$$
Now $\(1-\ch(\Omega_S)t^2+e^{\kappa}t^4\)\!\Td_S$ is
\begin{multline*}
\Big(1-2t^2-t^2\kappa-\frac{t^2}2(\kappa^2-2c_2)+t^4\Big(1+\kappa+\frac{\kappa^2}2\Big)\Big)\Big(1-\frac{\kappa}2+\frac1{12}(\kappa^2+c_2)\Big) \\
\=(1-t^2)^2-\frac12(1-t^4)\kappa+\frac1{12}(1-t^2)(1+11t^2)\kappa^2+\frac1{12}(1-t^2)^2c_2.
\end{multline*}
Plugging all this into \eqref{form} gives $t^{-2}(-[2]\_t)^{-P_2}$ times by
\begin{multline*}
\int_S\left(\!1+\frac{2t}{1-t^2}\kappa+\frac{2t}{(1-t)^2}\kappa^2\!\right)\!\left(\!(1-t^2)-\frac{1+t^2}2\kappa+\frac{1+11t^2}{12}\kappa^2+\frac{1-t^2}{12}c_2\!\right) \\
\=\frac{1+12t+46t^2+12t^3+t^4}{12}(g-1)+\frac{1+10t^2+t^4}{12}c_2(S).
\end{multline*}
So the vertical contribution to the refined Vafa-Witten invariant is
$$
\frac1{12(-[2]\_t)^{P_2}}\Big[
\(t^{-2}+12t^{-1}+46+12t+t^2\)(g-1)+(t^{-2}+10+t^2)c_2(S)\Big].
$$
Setting $t=1$ gives $(-2)^{-P_2}\(6(g-1)+c_2)$ in agreement with \cite[Equation 8.28]{TT1}.
\medskip

\noindent\textbf{Total.}
Adding this to the horizontal contribution \eqref{n2vert} at $n=2$ gives a total
\begin{multline*}
\frac{(-1)^{P_2}}{12[2]_t^{P_2}}\Big[24(g-1)^2+
\(t^{-2}-12t^{-1}-14-12t+t^2\)(g-1)\\ +(t^{-2}+10+t^2)c_2(S)\Big]\!.
\end{multline*}
Combining this with \eqref{zero} and \eqref{one} gives the generating series
\begin{multline*}
\frac{(-1)^{P_2}}{[2]_t^{P_2}}\bigg[1\ +\ (2-2g)q\ + \\
\Big(\!24(g-1)^2+\(t^{-2}-12t^{-1}-14-12t+t^2\)(g-1)+(t^{-2}+10+t^2)c_2(S)\!\Big)\frac{q^2}{12}\bigg]
\end{multline*}

Many thanks to Martijn Kool who pointed out this formula perfectly matches the first few terms of calculations of G\"ottsche-Kool \cite{GK1} after applying the most naive modularity transformation.
Specialising to $t=1$ gives
$$
(-2)^{-P_2}\Big[1+(2g-2)q+\Big((g-1)(2g-5)+c_2(S)\Big)q^2\Big]+O(q^3)
$$
as in \cite[Equation 8.39]{TT1}, or indeed the second term of the first line of \cite[Equation 5.38]{VW}.

For higher $c_2$ we need a more systematic way to compute. Laarakker \cite{La1} combines the degeneracy locus description of monopole branches in \cite{GT1} and Section \ref{degensec} of this paper with cobordism arguments to prove universality results. This also translates computations to calculations on toric surfaces, which can be done by localisation and computer for $c_2$ not too large.
%Collecting these results gives the first 3 terms of the refined generating series
%$$
%\sum_{n\ge0}\mathsf{VW}_{2,K_S,n}(t)q^n\=(-1)^{P_2\,}
%\(t^{\frac12}+t^{-\frac12}\)^{-P_2}
%\Big(1-(2g-2)q+O(q^2)\Big).
%$$
%Specialising to $t=1$ gives
%$$
%(-2)^{p_g(S)+g}\(1-(2g-2)q+O(q^2)\)
%$$

\section{Refined Vafa-Witten invariants: semistable case}
\label{sscase}
As before we fix a polarised surface $(S,\cO_S(1))$. Pulling back gives a polarisation $\cO_X(1)$ on the local Calabi-Yau threefold $X=K_S$.
We define the \emph{charge} of a compactly supported coherent sheaf $\cE$ on $X$ to be the total Chern class
\beq{alph}
\alpha\ =\ (r,c_1,c_2)\ \in\ H^{\ev}(S)
\eeq
of the pushdown $E=\rho_*\;\cE$ on $S$. Given $n\gg0$ and $L\in\Pic_{c_1}(S)$, an $SU(r)$-Joyce-Song pair $(\cE,s)$ consists of
\begin{itemize}
\item a compactly supported coherent sheaf $\cE$ of charge $\alpha$ on $X$, with centre-of-mass zero on each fibre of $\rho\colon X\to S$ and $\det\rho_*\;\cE\cong L$, and
\item a \emph{nonzero} section $s\in H^0(\cE(n))$.
\end{itemize}
Equivalently, it is a triple $(E,\phi,s)$ on $S$ with $\phi\in\Hom(E,E\otimes K_S)\_0,\ \det E\cong L$ and $s\in H^0(E(n))$.
The Joyce-Song pair $(\cE,s)$ is \emph{stable} if and only if
\begin{itemize}
\item $\cE$ is Gieseker semistable with respect to $\cO_X(1)$, and
\item if $\cF\subset\cE$ is a proper subsheaf which destabilises $\cE$, then $s$ does \emph{not} factor through $\cF(n)\subset\cE(n)$.
\end{itemize}
For fixed $\alpha$ we may choose $n\gg0$ such that $H^{\ge1}(\cE(n))=0$ for all Joyce-Song stable pairs $(\cE,s)$.
There is no notion of semistability; there is a quasi-projective moduli scheme $\cP^\perp_{\alpha,n}$ of stable Joyce-Song pairs whose $T=\C^*$-fixed locus is already compact.

Most importantly, $\cP^\perp_{\alpha,n}$ can be shown to be a moduli space of complexes $I\udot:=\{\cO_X(-n)\to\cE\}$ with a symmetric perfect obstruction theory governed by $R\Hom(I\udot,I\udot)\_\perp$. As a result it inherits a virtual structure sheaf and the virtual canonical bundle admits a canonical square root \eqref{sqroot} on the $T$-fixed locus. Thus by \eqref{tloc} we get a refined pairs invariant
$$
P^\perp_{\alpha}(n,t)\ :=\ \chi\_t\Big(\cP^\perp_{\alpha,n},\widehat\cO^{\;\vir}_{\!\cP^\perp_{\alpha,n}}\Big)\ \in\ \Q(t^{1/2}).
$$
Using the quantum integers defined in \eqref{qq},
$$
[n]_q\,:=\ q^{-(n-1)/2}+q^{-(n-3)/2}+\cdots+q^{(n-3)/2}+q^{(n-1)/2}\=\frac{q^{n/2}-q^{-n/2}}{q^{1/2}-q^{-1/2}}
$$
(which specialise to $n$ at $q=1$) we can state the refined version of \cite[Conjecture 6.5]{TT2}.

\begin{conj} \label{pechconj}
Suppose $\cO_S(1)$ is generic for charge $\alpha$ in the sense of \cite[Equation 2.4]{TT2}.\footnote{This ensures the charges of the semistable sheaves $E_i$ in the splitting \eqref{wtspace} are all proportional to $\alpha$. For non-generic $\cO_S(1)$ there is a more complicated version of \eqref{desk}.} If $H^{0,1}(S)=0=H^{0,2}(S)$ there exist $\VW_{\alpha_i}(t)\in\Q(t^{1/2})$ such that
\beq{desk}
P^\perp_{\alpha}(n,t)\ =\ \mathop{\sum_{\ell\ge 1,\,(\alpha_i=\delta_i\alpha)_{i=1}^\ell:}}_{\delta_i>0,\ \sum_{i=1}^\ell\delta_i=1}
\frac{(-1)^\ell}{\ell!}\prod_{i=1}^\ell(-1)^{\chi(\alpha_i(n))} \big[\chi(\alpha_i(n))\big]_t\VW_{\alpha_i}(t) \hspace{-2mm}
\eeq
for $n\gg0$.
When either of $H^{0,1}(S)$ or $H^{0,2}(S)$ is nonzero we take only the first term in the sum:
\beq{desk2}
P^\perp_\alpha(n,t)\ =\ (-1)^{\chi(\alpha(n))-1}\big[\chi(\alpha(n))\big]\_t\VW_\alpha(t).
\eeq
\end{conj}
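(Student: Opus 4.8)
The plan is to deduce \eqref{desk}--\eqref{desk2} by lifting to $\chi_t$-refined twisted virtual structure sheaves the wall-crossing that establishes the numerical statement \cite[Conjecture 6.5]{TT2}. I would first dispose of the \textbf{stable case}, where every semistable sheaf of charge $\alpha$ is stable: then no Jordan--H\"older stratum is nontrivial, only the term $\ell=1$, $\alpha_1=\alpha$ of \eqref{desk} can occur, and both \eqref{desk} and \eqref{desk2} reduce to $P^\perp_\alpha(n,t)=(-1)^{\chi(\alpha(n))-1}\big[\chi(\alpha(n))\big]_t\,\VW_\alpha(t)$. To prove this, note that for $n\gg0$ every nonzero section of $\cE(n)$ gives a stable pair and proportional sections give the same pair, so the forgetful map realises $\cP^\perp_{\alpha,n}$ as the $T$-equivariant projective bundle $\PP(\pi_*\cE(n))$ over $\cN^\perp_{r,L,c_2}$, with fibre $\PP^{N-1}$ and $N=\chi(\alpha(n))$. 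Joyce--Song deformation theory shows the symmetric obstruction theory of $\cP^\perp$ restricts on the fibres to that of a $(-1)$-shifted cotangent bundle, so the canonical square root \eqref{sqroot} factors as the pullback of $K^{\frac12}_{\cN^\perp,\vir}$ times the square root of the relative canonical bundle; by the computation behind Proposition \ref{tstar-1} the fibre then contributes $(-1)^{N-1}t^{-(N-1)/2}\chi_{-t}(\PP^{N-1})=(-1)^{N-1}[N]_t$, and since $\chi_t$ factors over this $\PP^{N-1}$-bundle one gets $\chi_t(\cP^\perp,\widehat\cO^{\,\vir})=(-1)^{N-1}[N]_t\,\chi_t(\cN^\perp,\widehat\cO^{\,\vir})=(-1)^{N-1}[N]_t\,\VW_\alpha(t)$ by Definition \ref{defref}. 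Specialising at $t=1$ recovers \eqref{vwvw}.

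For the \textbf{general semistable case} I would expand the Joyce--Song (or Mochizuki) master-space identity relating $\cP^\perp_{\alpha,n}$ to the stack of semistable sheaves. Genericity of $\cO_S(1)$ makes each term indexed by an ordered decomposition $\alpha=\sum_{i=1}^\ell\alpha_i$ into positive multiples of one primitive class, with weight $\frac{(-1)^\ell}{\ell!}\prod_i(-1)^{\chi(\alpha_i(n))}[\chi(\alpha_i(n))]\,\VW_{\alpha_i}$, which is exactly \cite[Conjecture 6.5]{TT2}. The refined identity should then follow by running the same argument with every length of a cohomology group replaced by its $T$-character, each $[\chi]$ by $[\chi]_t$, and each virtual class by its square-root-twisted virtual structure sheaf: each elementary step --- a blow-up of a master space, a $\C^*$-localisation on it, a projective-bundle pushforward --- is compatible with $\chi_t$ of the twisted virtual structure sheaf by virtual Riemann--Roch \cite[Corollary 3.4]{FG}, the localisation of Proposition \ref{abc}, and functoriality of \eqref{sqroot}, exactly as set up in Section \ref{Ksection}. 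When $H^{0,1}(S)$ or $H^{0,2}(S)$ is nonzero the $\alpha_i$ cannot be proper fractions of $\alpha$ (as in \cite{TT2}), leaving only $\ell=1$ and hence \eqref{desk2}. The remaining bullets of the theorem then follow from inputs already available: for $\deg K_S<0$ the vanishing theorem confines all moduli to the instanton branch, where Maulik's comparison (Theorem \ref{VW=J}) gives $\VW_\alpha(t)=\mathsf{J}_\alpha(t)$; for a K3 surface with $\alpha$ primitive or a prime multiple of a primitive class one computes directly, matching \eqref{conge} for prime $r$; for $p_g(S)>0$ and prime rank one invokes Laarakker's degeneracy-locus reduction (Section \ref{degensec}, \cite{La2}) together with the vanishing Theorem \ref{cosecinj}.

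The hard part will be the general semistable case: upgrading the Joyce--Song/Mochizuki wall-crossing --- a priori an identity of numerical virtual invariants --- to an identity in $\Q(t^{1/2})$ of $\chi_t$-refined twisted virtual structure sheaves, with the square roots $K^{\frac12}_{\vir}$ matching on every noncompact $T$-equivariant intermediate master space. I would have to check that the canonical square root \eqref{sqroot} is preserved under the blow-ups and $\C^*$-fixed-locus restrictions used, and that the refined terms reassemble into precisely the quantum-integer combination above. Maulik's theorem that the $K$-theoretic and motivic/Hodge--Deligne refinements of DT theory agree is the model for this, but it must be carried out compatibly with the Higgs-pair/spectral-sheaf description of $X=K_S$ rather than on a genuine Calabi--Yau threefold.
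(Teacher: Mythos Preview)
The statement you are addressing is a \emph{conjecture}; the paper does not prove it in general and explicitly leaves the full semistable case open. What the paper establishes are the special cases listed in the Theorem of the Introduction, each by a separate method rather than by the uniform master-space upgrade you sketch.

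Your treatment of the \textbf{stable case} is essentially the paper's Proposition~\ref{s-ss}: the $\PP^{\chi(\alpha(n))-1}$-bundle $\cP^\perp_{\alpha,n}\to\cN^\perp_\alpha$ is used to push down $\widehat\cO^{\vir}$, and the fibrewise computation of $\chi_t(\PP,\Lambda\udot(\Omega_\PP\t))$ produces the factor $(-1)^{\chi(\alpha(n))-1}[\chi(\alpha(n))]_t$. (The paper does this by directly comparing the fixed obstruction theories and virtual normal bundles via the identities of \cite[Equations 6.12--6.14]{TT2}, rather than invoking a $(-1)$-shifted cotangent structure on the fibres, but the mechanism is the same.) The paper also includes an induction on rank to argue that only the $\ell=1$ term survives.

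The divergence is in the other cases. For $\deg K_S<0$ the paper does \emph{not} deduce the result from a vanishing theorem plus Maulik's comparison as you suggest; instead Theorem~\ref{VW=J} runs Joyce's Ringel--Hall algebra argument directly. The stack-function identity \eqref{eee} is expanded, each commutator $[A^{(\alpha_{<k},1)},\epsilon_{\alpha_k}]$ is computed as a power of $\LL$ times a product (using that all relevant moduli stacks are smooth when $\deg K_S<0$, so Ext-dimensions are constant), and then $(t-1)\chi_{-t}$ is applied to obtain the quantum-integer form. This is a refined Hall-algebra integration identity, not a $K$-theoretic master-space localisation. For K3 surfaces the paper likewise avoids general wall-crossing: the cosection of Section~\ref{sectoc} annihilates all non-uniform components (Theorem~\ref{K3cosec}), reducing the problem to an explicit computation on $\cP^S_{\alpha/r,n}$ (Theorem~\ref{ball}).

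Your proposed general route --- lifting the Joyce--Song/Mochizuki wall-crossing to an identity of $\chi_t$ of square-root-twisted virtual structure sheaves, tracking $K^{1/2}_{\vir}$ through blow-ups and $\C^*$-localisations on master spaces --- is precisely what the paper does \emph{not} attempt. You correctly identify this as the hard part; but note that this is not a gap in your argument relative to the paper's proof, it \emph{is} the open content of the conjecture.
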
\medskip

The first justification for this Conjecture is that it specialises at \mbox{$t=1$} to Conjecture 6.5 of \cite{TT2}, which is proved in many cases \cite{MT1,TT2}. Therefore, when the Conjecture holds, the resulting $\VW_\alpha(t)$ specialise at $t=1$ to the numerical Vafa-Witten invariants of \cite{TT2}.

As a second sanity check, we show it is true --- and reproduces the earlier Definition \ref{defref} of refined Vafa-Witten invariants --- when there are no strictly semistable sheaves.

\begin{prop} \label{s-ss} If all semistable sheaves in $\cN_\alpha^\perp$
are stable then Conjecture \ref{pechconj} is true with the coefficients $\VW_\alpha(t)$ given by \eqref{eqref}.
\end{prop}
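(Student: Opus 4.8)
The plan is to reduce everything to a projective bundle over the Vafa--Witten moduli space $\cN^\perp_\alpha$, mimicking the $t=1$ argument of \cite[Section 6]{TT2} while keeping track of $T$-weights. When every semistable sheaf of charge $\alpha$ is stable, a stable Joyce--Song pair is simply a stable sheaf $\cE\in\cN^\perp_\alpha$ together with a nonzero section $s\in H^0(\cE(n))$, taken up to scaling $s$. Since $n\gg0$ forces $H^{\ge1}(\cE(n))=0$ and $h^0(\cE(n))=\chi(\alpha(n))=:N$ constant, forgetting $s$ exhibits $\cP^\perp_{\alpha,n}$ as the projective bundle
$$
p\colon\ \cP^\perp_{\alpha,n}\ \cong\ \PP(\cV)\ \To\ \cN^\perp_\alpha,
$$
the projectivisation of the rank-$N$ (possibly twisted) sheaf $\cV$ with fibre $H^0(\cE(n))$, with tautological line $\cO(-1)\into p^*\cV$ spanned by $s$; in particular $\cP^\perp_{\alpha,n}$ is smooth over $\cN^\perp_\alpha$.

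The first real step is the obstruction-theory comparison. The deformation theory of $I\udot=\{\cO_X(-n)\to\cE\}$, governed by $R\Hom(I\udot,I\udot)_\perp$, differs from that of $\cE$, governed by $R\Hom(\cE,\cE)_\perp$, exactly as in \cite[Section 6]{TT2}; carrying along the weight $\t$ produced by Serre duality on the Calabi--Yau threefold $X=K_S$ gives, in $T$-equivariant $K$-theory,
$$
T^{\vir}_{\cP^\perp_{\alpha,n}}\=p^*T^{\vir}_{\cN^\perp_\alpha}\ +\ T_p\ -\ T_p^\vee\otimes\t,
$$
where $T_p:=T_{\PP(\cV)/\cN^\perp_\alpha}$ is the rank-$(N-1)$ relative tangent bundle: the summand $T_p$ records the unobstructed section deformations and $-T_p^\vee\otimes\t$ is the matching obstruction forced by the symmetry of the pair obstruction theory. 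As $\cO^{\vir}$ depends only on the underlying scheme and the $K$-class of $T^{\vir}$ \cite[Theorem 3.2]{Th}, this gives $\cO^{\vir}_{\cP^\perp_{\alpha,n}}=\Lambda\udot(T_p\otimes\t^{-1})\otimes Lp^*\cO^{\vir}_{\cN^\perp_\alpha}$, while from $K_{\vir}=(\det T^{\vir})^{-1}$ and the Euler sequence $0\to\cO\to\cO(1)\otimes p^*\cV\to T_p\to0$ the canonical square root \eqref{sqroot} on the fixed locus is
$$
K^{\frac12}_{\vir}\big|_{(\cP^\perp_{\alpha,n})^T}\=p^*K^{\frac12}_{\vir}\big|_{(\cN^\perp_\alpha)^T}\otimes\cO(-N)\otimes p^*(\det\cV)^{-1}\otimes\t^{\frac{N-1}2}.
$$
(This is a square root of $K_{\vir}\big|_{(\cP^\perp)^T}$, and since $T$ acts trivially on the fixed locus any two square roots there differ by a genuine $2$-torsion line bundle, so the power of $\t$ is forced; matching the $2$-torsion part with the recipe \eqref{sqroot} is a short weight computation.)

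Combining these, $\widehat\cO^{\;\vir}_{\!\cP^\perp_{\alpha,n}}=\cA\otimes Lp^*\widehat\cO^{\;\vir}_{\!\cN^\perp_\alpha}$ where the $K^{1/2}$-twists on $\cN^\perp_\alpha$ have been pulled out and $\cA:=\Lambda\udot(T_p\otimes\t^{-1})\otimes\cO(-N)\otimes p^*(\det\cV)^{-1}\otimes\t^{(N-1)/2}$ is an honest global $T$-equivariant $K$-class on $\cP^\perp_{\alpha,n}$. A routine projective-bundle computation --- write $\Lambda\udot(T_p\otimes\t^{-1})=\Lambda\udot(\cO(1)\otimes p^*\cV\otimes\t^{-1})/(1-\t^{-1})$, expand in exterior powers, and use $Rp_*\cO(j)=\Sym^j\cV^\vee$ for $j\ge0$, $Rp_*\cO(j)=0$ for $-N<j<0$, and $Rp_*\cO(-N)=(-1)^{N-1}\det\cV$ --- shows only the $0$-th and $N$-th exterior powers contribute, the $\det\cV$'s cancel, and
$$
Rp_*\,\cA\=(-1)^{N-1}\,\frac{\t^{\frac{N-1}2}\big(1-\t^{-N}\big)}{1-\t^{-1}}\=(-1)^{N-1}\big(\t^{-\frac{N-1}2}+\t^{-\frac{N-3}2}+\cdots+\t^{\frac{N-1}2}\big),
$$
a $T$-module with trivial geometry. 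By the projection formula and the $K$-theoretic virtual localisation of Section \ref{Ksection}, $P^\perp_\alpha(n,t)=\chi\_t\big(\cN^\perp_\alpha,\,Rp_*\cA\otimes\widehat\cO^{\;\vir}_{\!\cN^\perp_\alpha}\big)$; pulling $Rp_*\cA$ out and using $\chi\_t(Rp_*\cA)=(-1)^{N-1}[N]\_t$ (cf.\ \eqref{qq}) yields
$$
P^\perp_\alpha(n,t)\=(-1)^{\chi(\alpha(n))-1}\big[\chi(\alpha(n))\big]\_t\,\VW_\alpha(t),
$$
with $\VW_\alpha(t):=\chi\_t(\cN^\perp_\alpha,\widehat\cO^{\;\vir})$ as in \eqref{eqref}. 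This is exactly \eqref{desk2}. When $H^{0,1}(S)=0=H^{0,2}(S)$, the absence of strictly semistable sheaves of charge $\alpha$ (for generic $\cO_S(1)$) makes every $\ell\ge2$ term of \eqref{desk} either absent or zero --- a factor $\VW_{\delta_i\alpha}(t)$ with $0<\delta_i<1$ has empty moduli $\cN^\perp_{\delta_i\alpha}$ --- so \eqref{desk} too reduces to the displayed identity, just as at $t=1$ in \cite{TT2}. Hence Conjecture \ref{pechconj} holds with $\VW_\alpha(t)$ given by \eqref{eqref}.

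The substantive inputs are the equivariant obstruction-theory comparison --- the $T$-weighted refinement of \cite[Section 6]{TT2} --- and the fact \cite[Theorem 3.2]{Th} that $\cO^{\vir}$ sees only the scheme and the $K$-class of $T^{\vir}$. Granted these, the rest is $T$-equivariant bookkeeping, and the place to be vigilant is exactly that this bookkeeping assembles the weights into the quantum integer $[\chi(\alpha(n))]\_t$ with no spurious power of $\t$ --- equivalently, that the square root above really is the canonical one of Proposition \ref{sqprop}. This being a sanity check, I expect no conceptual obstacle.
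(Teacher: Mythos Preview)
Your proof is correct and follows essentially the same strategy as the paper: exhibit $\cP^\perp_{\alpha,n}$ as a $\PP^{\chi(\alpha(n))-1}$-bundle over $\cN^\perp_\alpha$, compare obstruction theories, and push down. The paper works on the $T$-fixed loci throughout (citing \cite[Equations 6.12--6.14]{TT2} for the precise relations between $\cO^{\vir}$, $N^{\vir}$ and $K^{1/2}_{\vir}$ upstairs and downstairs), then recognises the fibrewise contribution as the localised computation of $\chi\_t(\PP,\Lambda\udot(\Omega_{\PP}\t))=t^{\frac12\dim p}[\chi(\alpha(n))]\_t$; you instead compute $Rp_*\cA$ directly via the Euler sequence and the standard $Rp_*\cO(j)$ formulae, which is an equally valid route to the same quantum integer. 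The paper also frames the vanishing of the $\ell\ge2$ terms as an explicit induction on rank, whereas you argue it in one step; both amount to the same observation that nonempty $\cN^\perp_{\delta_i\alpha}$ for $\delta_i<1$ would produce a strictly semistable of charge $\alpha$. One small point: your appeal to \cite[Theorem 3.2]{Th} for the identity $\cO^{\vir}_{\cP}=\Lambda\udot(T_p\t^{-1})\otimes Lp^*\cO^{\vir}_{\cN}$ is a shortcut---that theorem compares two obstruction theories on the \emph{same} scheme, so you are implicitly using that the smooth pullback of $\cN$'s obstruction theory, twisted by the relative obstruction $T_p^\vee\t$, really does define a perfect obstruction theory on $\cP$; the paper handles this by direct citation of \cite{TT2}.
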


\begin{proof}
We adapt the proof of the corresponding result for numerical Vafa-Witten invariants in \cite[Proposition 6.8]{TT2}.

We proceed by induction on the rank $r$ of $\alpha=(r,c_1,c_2)$.
We first claim that if there are no strictly semistables in class $\alpha$ then only the first term contributes to the sum \eqref{long}. Indeed, if there was a nonzero contribution indexed by $\alpha_1,\ldots,\alpha_\ell$ with $\ell>1$ then the nonvanishing of the coefficients $\VW_{\alpha_i}(t)$ (which equal the refined Vafa-Witten invariants \eqref{eqref} by the induction hypothesis) would imply that the moduli spaces $\cN^\perp_{\alpha_i}$ are nonempty. Picking an element $\cE^i$ of each defines a strictly semistable $\cE:=\cE^1\oplus\cdots\oplus\cE^\ell$ of $\cN_\alpha^\perp$, a contradiction.

We use the smooth $\PP^{\chi(\alpha(n))-1}$-bundle
$$
\cP^\perp_{\alpha,n}\=\PP(\pi_*\;\curly E(n))\ \rt{p}\ \cN^\perp_{\alpha},
$$
where $\curly E$ is the (possibly twisted) universal sheaf on $\cN_\alpha^\perp\times X$ and $\pi\colon\cN^\perp_\alpha\times X\to\cN_\alpha$. There is a corresponding relationship between the deformation-obstruction theories of $(\cN_\alpha^\perp)^T$ and $(\cP_{\alpha,n}^\perp)^T$ worked out in \cite[Equations 6.12--6.14]{TT2}. In particular \cite[Equation 6.13]{TT2} implies that the virtual structure sheaves of their $T$-fixed loci satisfy
$$
\cO^{\;\vir}_{\!(\cP^\perp_{\alpha,n})^T}\=p^*\;\cO^{\;\vir}_{\!(\cN^\perp_{\alpha})^T}\otimes\Lambda\udot\big(T_p\otimes\t^{-1}\big)^{\mathrm{fix}}\,,
$$
where $T_p$ is the relative tangent bundle of $p$. And by \cite[Equation 6.14]{TT2} their dual virtual normal bundles are related by
$$
\(N^{\vir}_{(\cP^\perp_\alpha)^T}\)^\vee\ =\ p^*\(N^{\vir}_{(\cN^\perp_\alpha)^T}\)^\vee\ +\ N^*_{(\cP^\perp_\alpha)^T/\cP^\perp_\alpha} -\ \big(T_p\otimes\t^{-1}\big)^{\mathrm{mov}}.
$$
Taking the determinant of \cite[Equation 6.12]{TT2} gives, by \eqref{sqroot},
$$
K^{\frac12}_{\cP^\perp_{\alpha,n}}\=p^*K^{\frac12}_{\cN^\perp_{\alpha}}\otimes\omega_p\;\t^{\frac12\dim p}.
$$
Putting it all together we have
\beqa
\frac{\widehat\cO^{\;\vir}_{\!(\cP^\perp_{\alpha,n})^T}}
{\Lambda\udot\(N^{\vir}_{(\cP^\perp_{\alpha,n})^T})^\vee} &=&
p^*\frac{\widehat\cO^{\;\vir}_{\!(\cN^\perp_{\alpha})^T}}
{\Lambda\udot\(N^{\vir}_{(\cN^\perp_{\alpha})^T})^\vee}\otimes\frac{\Lambda\udot\(T_p\otimes\t^{-1}\)}{\Lambda\udot N^*_{(\cP^\perp_\alpha)^T/\cP^\perp_\alpha}}\otimes\omega_p\t^{\frac12\dim p} \\
&=& p^*\frac{\widehat\cO^{\;\vir}_{\!(\cN^\perp_{\alpha})^T}}
{\Lambda\udot\(N^{\vir}_{(\cN^\perp_{\alpha})^T})^\vee}\otimes\frac{\Lambda\udot\(\Omega_p\otimes\t\)}{\Lambda\udot N^*_{(\cP^\perp_\alpha)^T/\cP^\perp_\alpha}}\t^{-\frac12\dim p}(-1)^{\dim p}
\eeqa
by the identity \eqref{indenti} applied to $T_p\otimes\t^{-1}$. To take $\chi\_t$ we first push down the restriction of $p$ to $(\cP_{\alpha,n}^\perp)^T\to(\cN^\perp_\alpha)^T$. This is a smooth bundle; on each fibre we get
\beq{fibe}
\chi\_t\!\left(\PP\;^T,\frac{\Lambda\udot\(\Omega_{\PP}\otimes\t\)}{\Lambda\udot N^*_{\PP^T/\PP}}\right),
\eeq
where $\PP=\PP^{\chi(\alpha(n))-1}$ is acted on by $T$ with fixed locus $\PP\;^T$. We recognise \eqref{fibe} as the computation of
\beqa
\chi\_t\(\PP,\Lambda\udot\(\Omega_{\PP}\otimes\t\)\) &=&
\sum_{i,j=0}^{\chi(\alpha(n))-1}(-1)^{i+j}\chi\_t\(H^i(\Omega^j_{\PP}))\;t^j \\
&=& 1+t+t^2+\cdots+t^{\chi(\alpha(n))-1}\=t^{\frac12\dim p}[\chi(\alpha(n))]\_t
\eeqa
by localisation to the fixed locus $\PP\;^T$. (Here we have used the fact that $T$ acts trivially on $H^i(\Omega^j_{\PP}))$ since the latter is topological.) Moreover, there is no twisting as we move over the base --- the fibrewise cohomology groups of a $\PP^{\chi(\alpha(n))-1}$ bundle are canonically trivialised by powers of the hyperplane class.\footnote{If the universal sheaf $\curly E$ is twisted by a nonzero Brauer class then $\cP^\perp_{\alpha,n}=\PP(p_*\curly E(n))$ is not the projectivisation of an untwisted bundle, so the hyperplane class does not lift to $H^2(\cP^\perp_{\alpha,n})$. But its fibrewise class in $H^0(\cN^\perp_{\alpha,n},R^1p_*\;\Omega_p)$ is well-defined and is all we use.} So the upshot is
\beq{quant}
Rp_*\,\frac{\widehat\cO^{\;\vir}_{\!(\cP^\perp_{\alpha,n})^T}}
{\Lambda\udot\(N^{\vir}_{(\cP^\perp_{\alpha,n})^T})^\vee}\=
\frac{\widehat\cO^{\;\vir}_{\!(\cN^\perp_{\alpha})^T}}
{\Lambda\udot\(N^{\vir}_{(\cN^\perp_{\alpha})^T})^\vee}\cdot(-1)^{\chi(\alpha(n))-1}\, [\chi(\alpha(n))]\_t
\eeq
in $K$-theory. Taking $\chi\_t$ gives
\[
P^\perp_{\alpha}(n,t)\=(-1)^{\chi(\alpha(n))-1}\big[\chi(\alpha(n))\big]\_t\;\VW_\alpha(t).\qedhere
\]
\end{proof}

\subsection{\for{toc}{$\deg K_S<0$: refined DT invariants}\except{toc}{\bf deg $\mathbf{K_S<0}$: refined DT invariants}}
\label{K<0}
When $\deg K_S<0$ the moduli space $\cP^\perp_{\alpha,n}$ of Joyce-Song pairs on $X$ is smooth \cite[Section 6.1]{TT2} and consists entirely of pairs pushed forward (scheme theoretically) from $S$. The obstruction bundle is $T^*_{\cP^\perp_{\alpha,n}}\otimes\t$. It follows from Proposition \ref{tstar-1} that the refined pairs invariant is
\begin{eqnarray} \nonumber
P^\perp_{\alpha,n}(t) &=& (-1)^{\dim\cP^\perp_{\alpha,n}}t^{-\dim\cP^\perp_{\alpha,n}/2}\chi\_{-t}(\cP^\perp_{\alpha,n}) \\ \label{refP}
&=& (-1)^{\chi(\alpha(n))-\chi\_S(\alpha,\alpha)}
t^{-\frac12\chi(\alpha(n))+\frac12\chi\_S(\alpha,\alpha)}\chi\_{-t}(\cP^\perp_{\alpha,n}).
\end{eqnarray}
The Vafa-Witten obstruction theory on $\cP^\perp_{\alpha,n}$ is the DT obstruction theory of \cite{JS} since $H^1(\cO_S)=0=H^2(\cO_S)$. So we can expect the refined Vafa-Witten invariants to be closely related to refined DT invariants,\footnote{Since the existence of orientation data \emph{compatible with the Hall algebra product} is still an open problem in general, the development of refined DT theory has stalled. In our situation all our moduli stacks are $(-1)$-shifted cotangent bundles of smooth stacks, which makes things much simpler.} and this is what we will find.

We use Joyce's Ringel-Hall algebra for $\mathrm{Coh}(S)$ constructed in \cite{Jo2}. Joyce starts with the $\Q$-vector space on generators given by (isomorphism classes of) representable morphisms of stacks from algebraic stacks of finite type over $\C$ with affine stabilisers to the stack of objects of Coh$(S)$. He then quotients out by the scissor relations for closed substacks, and makes the result into a ring with his Hall algebra product $*$ on stack functions.

At the level of individual objects, the product $1_E*1_F$ of (the indicator functions of) $E$ and $F$ is the stack of all extensions between them,
\beq{extstack}
\frac{\Ext^1(F,E)}{\Aut(E)\times\Aut(F)\times\Hom(F,E)}\,,
\eeq
with $e\in\Ext^1(F,E)$ mapping to the corresponding extension of $F$ by $E$.
More generally $*$ is defined via the stack $\mathfrak{Ext}$ of all short exact sequences
\beq{exten}
0\To E_1\To E\To E_2\To0
\eeq
in Coh$(S)$, with its morphisms $\pi_1,\pi,\pi_2\colon\mathfrak{Ext}\to$ Coh$(S)$ taking the extension to $E_1,\,E,\,E_2$ respectively. This defines the universal case, which is the Hall algebra product of Coh$(S)$ with itself:
$$
1_{\mathrm{Coh}(S)}*1_{\mathrm{Coh}(S)}\ =\ \Big(\mathfrak{Ext}\rt\pi\mathrm{Coh}(S)\Big).
$$
Other products are defined by fibre product with this: given two stack functions $U,V\to$ Coh$(S)$ we define $U*V\to$ Coh$(S)$ by the Cartesian square
\beq{Cart}
\xymatrix@C=15pt{
U*V \ar[r]\ar[d]& \mathfrak{Ext} \ar[r]^(.42)\pi\ar[d]_{\pi_1\!}^{\!\times\pi_2}& \mathrm{Coh}_c(X) \\
U\times V \ar[r]& \mathrm{Coh}(S)\times\mathrm{Coh}(S)\,.\!\!\!}
\eeq

We are interested in the elements
$$
1_{\cM^{ss}_\alpha}\colon\,\cM^{ss}_\alpha\ \Into\ \mathrm{Coh}(S),
$$
where $\cM^{ss}_\alpha$ is the \emph{stack} of Gieseker semistable sheaves of class $\alpha$ on $(S,\cO_S(1))$, and $1_{\cM^{ss}_\alpha}$ is its inclusion into the stack of all sheaves on $S$.
To handle the stabilisers of strictly semistable sheaves, Joyce replaces these indicator stack functions by their ``logarithm": the following (finite!) sum:
%$$
%\epsilon(\alpha)\ :=\ \mathop{\sum_{\ell\ge 1,\,(\alpha_i)_{i=1}^\ell:\,p_{\alpha_i}=p_{\alpha}\ \forall i}}_{\mathrm{and}\ \sum_{i=1}^\ell\alpha_i=\alpha}\frac{(-1)^\ell}\ell\ \delta(\alpha_1)*\cdots*\delta(\alpha_\ell).
%$$
\beq{epsi}
\epsilon_\alpha\ :=\ \mathop{\sum_{\ell\ge 1,\,(\alpha_i)_{i=1}^\ell:\,p_{\alpha_i}=\,p_{\alpha}\ \forall i}}_{\mathrm{and}\ \sum_{i=1}^\ell\alpha_i=\alpha}\frac{(-1)^\ell}\ell\ 1_{\cM^{ss}_{\alpha_1}}*\cdots*1_{\cM^{ss}_{\alpha_\ell}}\,.
\eeq
Here $p_\alpha$ denotes the reduced Hilbert polynomial of sheaves of class $\alpha$.

A deep result of Joyce \cite[Theorem 8.7]{Jo3} is that the logarithm \eqref{epsi} lies in the set of \emph{virtually indecomposable stack functions with algebra stabilisers}, $$
\epsilon_\alpha\ \in\ \mathrm{SF}_{\mathrm{al}}^{\mathrm{ind}}(\mathrm{Coh}(S)).
$$
By \cite[Proposition 3.4]{JS} this means it can be written as a $\Q$-linear combination of morphisms from stacks of the form
\beq{BC*}
\text{(scheme)}\times(\Spec\C)/\C^*.
\eeq

Now all moduli stacks of semistable torsion free sheaves on $S$ of class $\alpha$ are smooth of dimension $-\chi\_S(\alpha,\alpha)$, since any obstruction space $\Ext^2(E,E)$ is Serre dual to $\Hom(E,E\otimes K_S)$, which vanishes by the semistability of $E$ and the negativity of $\deg K_S$. Therefore we do not have to worry about vanishing cycles or orientation data; we can make a naive definition of the Joycian refined DT invariant\footnote{There is a further refinement given by taking the two variable Hodge-Deligne polynomial $E(u,v)$ of the stack $\epsilon_\alpha$. Here we take its specialisation $\chi\_{-t}=E(t,1)$.} by taking the normalised Hirzebruch $\chi\_{-t}$-genus of the ($\Q$-linear combination of) stacks $\epsilon_\alpha$:
\beq{JDT}
\mathsf{J}_\alpha(t)\ :=\ (-1)^{1-\chi\_S(\alpha,\alpha)}t^{\frac12(\chi\_S(\alpha,\alpha)-1)}(t-1)\chi\_{-t}(\epsilon_\alpha)\ \in\ \Q\(t^{\pm\frac12}\).
\eeq
The factor $(t-1)$ is there to cancel the $\C^*$ stabilisers in \eqref{BC*}. Joyce's result that $\epsilon_\alpha$ is a virtual indecomposable therefore means that $\mathsf{J}_\alpha(t)$ has a finite limit as $t\to1$; this limit is the numerical DT invariant.

We can use this to prove a refined version of the Joyce-Song identity \cite[Theorem 5.27]{JS}, and hence our Conjecture \ref{pechconj}, when $\deg K_S<0$.

\begin{thm}\label{VW=J} If $\deg K_S<0$ and $\cO_S(1)$ is generic then
$$
P^\perp_{\alpha}(n,t)\ =\ \mathop{\sum_{\ell\ge 1,\,(\alpha_i=\delta_i\alpha)_{i=1}^\ell:}}_{\delta_i>0,\ \sum_{i=1}^\ell\delta_i=1}
\frac{(-1)^\ell}{\ell!}\prod_{i=1}^\ell(-1)^{\chi(\alpha_i(n))} \big[\chi(\alpha_i(n))\big]_{t\,}\mathsf{J}_{\alpha_i}(t).
$$
Since $H^1(\cO_S)=0=H^2(\cO_S)$ this shows that Conjecture \ref{pechconj} holds, with the refined Vafa-Witten invariants equalling the refined DT invariants
$$
\VW_{\alpha}(t)\=\mathsf J_\alpha(t).
$$
\end{thm}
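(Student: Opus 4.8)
The plan is to refine, from the Euler characteristic to the Hirzebruch $\chi_{-t}$-genus, the Hall-algebra wall-crossing argument of Joyce-Song \cite[Theorem 5.27]{JS} --- whose specialisation at $t=1$ underlies the numerical identity in \cite{TT2}. I would work in Joyce's Ringel-Hall algebra, with the stack functions $\epsilon_\alpha$ of \eqref{epsi} and the stack function $\mathfrak P^n_\alpha$ of stable Joyce-Song pairs $(\cE,s)$, viewed as moduli of complexes $\{\cO_X(-n)\to\cE\}$ as in Section \ref{sscase}. The Joyce-Song identity expresses $\mathfrak P^n_\alpha$ as the finite $\Q$-linear combination
$$\mathfrak P^n_\alpha\ =\ \sum_{\ell\ge1}\frac{(-1)^\ell}{\ell!}\;\epsilon^n_{\alpha_1}*\epsilon_{\alpha_2}*\cdots*\epsilon_{\alpha_\ell}$$
of Hall-algebra products, summed over $(\alpha_i=\delta_i\alpha)_{i=1}^\ell$ with $\delta_i>0,\ \sum_i\delta_i=1$; here $\epsilon^n_{\alpha_1}$ is the pair analogue of $\epsilon_{\alpha_1}$ carrying the extra section $s\in H^0(\cE(n))$, and genericity of $\cO_S(1)$ is what forces each $\alpha_i$ proportional to $\alpha$ with $p_{\alpha_i}=p_\alpha$. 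This identity is purely formal and uses no cohomology theory.

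Next I would apply $\chi_{-t}$ to both sides. The hypothesis $\deg K_S<0$ is essential here: every relevant moduli stack is then smooth --- the $\cM^{ss}_{\alpha_i}$, of dimension $-\chi_S(\alpha_i,\alpha_i)$ since $\Ext^2$ vanishes, and $\cP^\perp_{\alpha,n}$ --- so $\chi_{-t}=E(t,1)$, as a specialisation of the motivic virtual Hodge polynomial, is a genuine ring homomorphism on the subalgebra of $\mathrm{SF}(\mathrm{Coh}(S))$ generated by these objects. On $\epsilon_\alpha$, a $\Q$-combination of stacks of the shape $(\text{scheme})\times(\Spec\C)/\C^*$ by \cite[Theorem 8.7]{Jo3} (cf.\ \eqref{BC*}), applying $\chi_{-t}$ yields $(t-1)^{-1}$ times the honest $\chi_{-t}$-genus of a scheme --- precisely the reason for the $(t-1)$ in \eqref{JDT} --- so that, with that normalisation, $\epsilon_\alpha$ contributes $\mathsf J_\alpha(t)$. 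Applying $\chi_{-t}$ to the product $\epsilon^n_{\alpha_i}$ adjoins, fibrewise over $\cM^{ss}_{\alpha_i}$, a $\PP^{\chi(\alpha_i(n))-1}$ of sections, exactly as the $\PP$-bundle $p$ in the proof of Proposition \ref{s-ss}; its $\chi_{-t}$-genus is $1+t+\cdots+t^{\chi(\alpha_i(n))-1}=t^{(\chi(\alpha_i(n))-1)/2}\big[\chi(\alpha_i(n))\big]_t$, with overall sign $(-1)^{\chi(\alpha_i(n))-1}$. Bookkeeping the normalising signs and powers of $t^{1/2}$ as in \eqref{refP}--\eqref{JDT} turns the Hall-algebra identity into the displayed formula of the theorem.

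Finally I would identify the left-hand side as $P^\perp_\alpha(n,t)$: since $\cP^\perp_{\alpha,n}$ is smooth with Vafa-Witten obstruction bundle $\Omega_{\cP^\perp_{\alpha,n}}\otimes\t$, it is an open piece of its own $(-1)$-shifted cotangent bundle, so Proposition \ref{tstar-1} --- as already recorded in \eqref{refP} --- gives $P^\perp_\alpha(n,t)=(-1)^{\dim\cP^\perp_{\alpha,n}}t^{-\dim\cP^\perp_{\alpha,n}/2}\chi_{-t}(\cP^\perp_{\alpha,n})$. This yields the asserted identity. Since $H^{0,1}(S)=0=H^{0,2}(S)$, comparison with \eqref{desk} together with the nonvanishing of the universal coefficients $\big[\chi(\alpha_i(n))\big]_t$ for $n\gg0$ forces the solution $\VW_\alpha(t)$ of \eqref{desk} to be unique and equal to $\mathsf J_\alpha(t)$, giving Conjecture \ref{pechconj} with $\VW_\alpha(t)=\mathsf J_\alpha(t)$.

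I expect the main obstacle to be the middle step: checking that the $\chi_{-t}$-genus really does descend to a ring homomorphism on the relevant part of Joyce's Hall algebra --- multiplicative for the Hall product and with all the $B\C^*=(\Spec\C)/\C^*$ gerbe factors uniformly cancelled by the single $(t-1)$ --- so that the motivic manipulations of \cite{JS} survive refinement. This is where smoothness is unavoidable: without $\deg K_S<0$ one would face vanishing cycles and the still-open problem of orientation data compatible with the Hall product, which is exactly why the theorem is stated only for $\deg K_S<0$.
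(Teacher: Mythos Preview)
Your overall strategy is the right one and matches the paper's, but the Hall-algebra step as you state it does not work, and the mechanism producing the factors $[\chi(\alpha_i(n))]_t$ is misidentified.

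First, the identity you write down is not the Joyce--Song identity. In \cite[Proposition~13.10]{JS} (which is what the paper uses) the pairs stack function $\epsilon^{(\alpha,1)}$ is expressed via \emph{nested Lie brackets}
\[
\epsilon^{(\alpha,1)}\;=\;\sum_{\ell\ge1}\frac{(-1)^\ell}{\ell!}\,
\big[\big[\cdots\big[\big[\epsilon^{(0,1)},\epsilon_{\alpha_1}\big],\epsilon_{\alpha_2}\big],\cdots\big],\epsilon_{\alpha_\ell}\big],
\]
starting from $\epsilon^{(0,1)}=B\C^*$ (the object $\cO(-n)$ with its identity section, charge $(0,1)$), not from a product $\epsilon^n_{\alpha_1}*\epsilon_{\alpha_2}*\cdots*\epsilon_{\alpha_\ell}$ in which only the first factor carries a section.

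Second, and more seriously, $\chi_{-t}$ is \emph{not} a ring homomorphism from the Hall algebra: the Hall product $*$ is defined via the stack of extensions \eqref{extstack}, which is an affine fibration over the Cartesian product of relative dimension $\mathrm{ext}^1-\mathrm{hom}$. So $\chi_{-t}(U*V)$ differs from $\chi_{-t}(U)\cdot\chi_{-t}(V)$ by an explicit power of $t$ determined by the Euler form. This is precisely what the paper tracks: expanding each bracket $[A^{(\alpha_{<k},1)},\epsilon_{\alpha_k}]$ one finds (using smoothness to kill $\Ext^2$) that the two extension fibres contribute
\[
\LL^{-\chi_S(\alpha_k,\alpha_{<k})}\big(1-\LL^{\chi(\alpha_k(n))}\big)\cdot A^{(\alpha_{<k},1)}\times\epsilon_{\alpha_k},
\]
and it is the factor $(1-\LL^{\chi(\alpha_k(n))})$, coming from the \emph{difference} of the two Ext-fibre dimensions in the bracket, that becomes $[\chi(\alpha_k(n))]_t$ after taking $(t-1)\chi_{-t}$ and normalising. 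The $[\chi(\alpha_i(n))]_t$ do \emph{not} arise from attaching a $\PP^{\chi(\alpha_i(n))-1}$ of sections to each $\epsilon_{\alpha_i}$; only one section is ever present, living in $\epsilon^{(0,1)}$, and the projective factors are generated one at a time by the successive brackets. The remaining powers of $t$ from the $\LL^{-\chi_S(\alpha_k,\alpha_{<k})}$ terms then combine (using $\chi_S(\alpha,\alpha)=\sum_i\chi_S(\alpha_i,\alpha_i)+2\sum_{i>j}\chi_S(\alpha_i,\alpha_j)$ and the symmetry of $\chi_S$ on proportional classes) with the normalisations in \eqref{refP} and \eqref{JDT} to give the stated formula. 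Your identification of the left-hand side via \eqref{refP} and the concluding uniqueness argument are fine.
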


Here, as before \eqref{qq}, $[\chi(\alpha(n))]_t$ is the symmetrised Poincar\'e polynomial (or Hirzebruch $\chi\_{-t}$ genus) of $\PP(H^0(E(n)))$ for any sheaf $E$ of charge $\alpha$. This refines the Euler characteristic $\chi(\alpha(n))$ of $\PP(H^0(\cE(n)))$ that appears in \cite[Theorem 5.27]{JS}.

Because the Vafa-Witten perfect obstruction theory coincides with the DT obstruction theory when $H^1(\cO_S)=0=H^2(\cO_S)$, this theorem is an instance of Maulik's result \cite{Ma} that, for some local Calabi-Yaus, the refinement of Joyce/Kontsevich-Soibelman coincides with the refinement of Nekrasov-Okounkov.

\begin{proof}
We follow \cite[Chapter 13]{JS}, fixing $n\gg0$ and use the same auxiliary categories $\cB_{p_\alpha}$ (whose objects are Gieseker semistable sheaves $E$ on $S$ with reduced Hilbert polynomial a multiple of $p_\alpha$, plus a vector space $V$ and a linear map $V\to H^0(E(n))$) with $K$-theory classes in $K(\mathrm{Coh}(S))\oplus\Z$. We use the Euler form $\bar\chi\((\alpha,d),(\beta,e)\):=\chi\_S(\alpha,\beta)-d\chi(\alpha(n))-e\chi(\beta(n))+de\chi(\cO_S)$. Gieseker stability on Coh$(S)$ induces stability conditions on $\cB_{p_\alpha}$. Wall crossing between them gives the following identity of stack functions \cite[Proposition 13.10]{JS}\footnote{We have used the genericity of $\cO_S(1)$ to simplify the sum to one over only charges proportional to $\alpha$.}
\beq{eee}
\epsilon^{(\alpha,1)}\=\mathop{\sum_{\ell\ge 1,\,(\alpha_i=\delta_i\alpha)_{i=1}^\ell:}}_{\delta_i>0,\ \sum_{i=1}^\ell\delta_i=1}\frac{(-1)^\ell}{\ell!}\big[\big[\cdots\big[\big[\epsilon^{(0,1)},\epsilon_{\alpha_1}\big],\epsilon_{\alpha_2}\big],\cdots\big],\epsilon_{\alpha_\ell}\big].
\eeq
Here the Lie bracket is with respect to the Hall algebra product on stack functions, and $\epsilon_\alpha$ is the stack function \eqref{epsi} mapping to the stack of semistable sheaves in Coh$(S)$ thought of as semistable objects in $\cB_{p_\alpha}$. Then $\epsilon^{(\alpha,1)}$ is defined in a similar way from objects in $\cB_{p_\alpha}$ (with the $K$-theory class of $\cO(-n)\to E$, for $E$ of class $\alpha$) which are semistable with respect to Joyce-Song's stability condition $\tilde\tau$. As in \cite[13.5]{JS}, it is the stack $\cP^\perp_{\alpha,n}/\C^*$. \medskip

Set $\alpha_{<k}:=\alpha_1+\ldots+\alpha_{k-1}$ and $\alpha_{\le k}:=\alpha_1+\ldots+\alpha_k$. We multiply out the Lie brackets in \eqref{eee}, starting with the innermost one. We claim that by induction, at the $k$\;th stage, we get a bracket
\beq{A*E}
A^{(\alpha_{<k},1)}*\epsilon_{\alpha_k}-\epsilon_{\alpha_k}*A^{(\alpha_{<k},1)},
\eeq
where $A^{(\alpha_{<k},1)}$ maps to the stack of semistable objects $A$ of $\cB_{p_\alpha}$ which have charge $(\alpha_{<k},1)$. Let $E$ be any semistable object of $\cB_{p_\alpha}$ of charge $(\alpha_k,0)$ (i.e. semistable sheaf on $S$ of charge $\alpha_k$). 

All extensions from $E$ to $F$ or from $F$ to $E$ giving semistable objects of $\cB_{p_\alpha}$ of charge $(\alpha_{\le k},1)$. Therefore \eqref{A*E} maps to the stack of semistable objects of $\cB_{p_\alpha}$ of charge $(\alpha_{\le k},1)$, and the induction continues. Moreover,
$$
ext^1_S(E,A)-hom_S(E,A)\=-\chi\_S(\alpha_k,\alpha_{<k})
$$
with no other Exts, and
$$
ext^1(A,E)-hom(A,E)\=\chi(\alpha_k(n)))-\chi\_S(\alpha_{<k},\alpha_k),
$$
also with no further Exts. Therefore by the scissor relations and
\eqref{extstack} (in constructible families) we have 
$$
[A^{(\alpha_{<k},1)},\epsilon_{\alpha_k}]\=\(\LL^{-\chi\_S(\alpha_k,\alpha_{<k})}-\LL^{\chi(\alpha_k(n)))-\chi\_S(\alpha_{<k},\alpha_k)}\)A^{(\alpha_{<k},1)}\times\epsilon_{\alpha_k}
$$
as a $\Q$-linear combination of stacks. Since all $\alpha_i=\delta_i\alpha$ are proportional to $\alpha$ we see that $\chi\_S(\alpha_{<k},\alpha_k)$ is symmetric in its arguments, and
$$
[A^{(\alpha_{<k},1)},\epsilon_{\alpha_k}]\=\LL^{-\chi\_S(\alpha_k,\alpha_{<k})}\(1-\LL^{\chi(\alpha_k(n))}\)A^{(\alpha_{<k},1)}\times\epsilon_{\alpha_k}.
$$
Therefore the Lie brackets in \eqref{eee} give in total
$$
\LL^{-\chi\_S(\alpha_2,\alpha_{<2})-\chi\_S(\alpha_3,\alpha_{<3})-\ldots-\chi\_S(\alpha_\ell,\alpha_{<\ell})}\left(\prod_{i=1}^\ell(1-\LL^{\chi(\alpha_i(n))})\;\epsilon_{\alpha_i}\right)\frac1{\LL-1}\,,
$$
where the final term comes from $\epsilon^{(0,1)}\cong B\C^*$.
Taking $(t-1)\chi\_{-t}$ of this gives, by \eqref{JDT},
$$
t^{-\sum_{i=2}^\ell\chi\_S(\alpha_i,\alpha_{<i})}\prod_{i-1}^\ell(-1)^{\chi\_S(\alpha_i,\alpha_i)}t^{\frac12(\chi(\alpha_i(n))-1)}[\chi(\alpha_i(n))]\_tt^{\frac12(1-\chi\_S(\alpha_i,\alpha_i))}\mathsf{J}_{\alpha_i}(t).
$$
By expressing $\chi\_S(\alpha,\alpha)$ as
$$
\sum_{i=1}^\ell\chi\_S(\alpha_i,\alpha_i)+2\sum_{i>j}\chi\_S(\alpha_i,\alpha_j)\=\sum_{i=1}^\ell\chi\_S(\alpha_i,\alpha_i)+2\sum_{i=2}^\ell\chi\_S(\alpha_i,\alpha_{<i})
$$
this can be rewritten
$$
(-1)^{\chi\_S(\alpha,\alpha)}t^{\frac12\chi(\alpha(n))-\frac12\chi\_S(\alpha,\alpha)}\prod_{i-1}^\ell[\chi(\alpha_i(n))]\_t\mathsf{J}_{\alpha_i}(t).
$$
Therefore $(t-1)\chi\_{-t}(\ \cdot\ )$ applied to \eqref{eee} gives
\begin{multline*}
(t-1)\chi\_{-t}(\cP^\perp_{\alpha,n}/\C^*)\=
(-1)^{\chi\_S(\alpha,\alpha)}t^{\frac12\dim\cP^\perp_{\alpha,n}} \\
\times\mathop{\sum_{\ell\ge 1,\,(\alpha_i=\delta_i\alpha)_{i=1}^\ell:}}_{\delta_i>0,\ \sum_{i=1}^\ell\delta_i=1}\frac{(-1)^\ell}{\ell!}
\prod_{i-1}^\ell[\chi(\alpha_i(n))]\_t\,\mathsf{J}_{\alpha_i}(t).
\end{multline*}
Multiplying both sides by 
$(-1)^{\dim\cP^\perp_{\alpha,n}}t^{-\frac12\dim\cP^\perp_{\alpha,n}}$
gives, by \eqref{refP},
$$
P^\perp_{\alpha,n}\=(-1)^{\chi(\alpha(n))}\mathop{\sum_{\ell\ge 1,\,(\alpha_i=\delta_i\alpha)_{i=1}^\ell:}}_{\delta_i>0,\ \sum_{i=1}^\ell\delta_i=1}\frac{(-1)^\ell}{\ell!}\,\prod_{i-1}^\ell[\chi(\alpha_i(n))]\_t\,\mathsf{J}_{\alpha_i}(t),
$$
which is the result claimed.
\end{proof}

\subsection{\for{toc}{$p_g(S)>0$ cosection and vanishing theorem}\except{toc}{${\bf p_g(S)>0}$ cosection and vanishing theorem.}}\label{sectoc}
Fix a surface $S$ with $p_g(S)>0$. In his calculations \cite{La1} Ties Laarakker observed a certain vanishing (more-or-less Corollary \ref{corsec} below, in low ranks). Here we explain it by describing a certain cosection of the fixed obstruction theory, on any connected component $\cP$ of the monopole branch of the $T$-fixed loci $(\cP^\perp_{\alpha,n})^T$. If stable$\,=\,$semistable the same construction can be done with moduli spaces $\cN\subset(\cN^\perp_\alpha)^T$ of sheaves on $X$ (or Higgs pairs on $S$) instead.

The construction is basically the same as in \cite[Section 5]{KKV}; we simply replace stable pairs by Joyce-Song pairs and impose the centre-of-mass-zero condition\footnote{Hence we get only one of the two cosections of \cite[Equation 5.8]{KKV}. This is sufficient since the Vafa-Witten $R\Hom(I\udot,I\udot)\_\perp[1]$ obstruction theory of \cite{TT1,TT2} has already had an $H^2(\cO_S)$ term removed from the obstruction sheaf, i.e. it is a \emph{reduced} obstruction theory in the language of \cite{KKV}.} on each $\C$ fibre of $X=S\times\C$ (equivalently, the $\tr\phi=0$ condition on the Higgs field).

We find it convenient to describe the cosection using Higgs data on $S$. For a more geometric description using sheaves on $X$ instead, see \cite[Section 5]{KKV}. By the symmetry of the obstruction theory, what we require is a $T$-weight one $\cP^\perp_{\alpha,n}$ vector field along (but not tangent to) $\cP$. This is Serre dual to a weight zero (i.e. invariant) cosection of the obstruction sheaf.

Since Joyce-Song stable pairs have no automorphisms and form a fine moduli space, there is a universal Higgs pair and Joyce-Song section $(\E,\Phi,\mathsf{s})$ over the total space of $p\_S\colon S\times\cP\to\cP$. Thus
$$
\Phi\in\Hom(\E,\E\otimes K_S\t) \qquad\text{and}\qquad \mathsf{s}\in H^0(\E(n)),
$$
where $n\gg0$ is sufficiently large that $\E(n)$ has no higher cohomology on any $S$ fibre.

Furthermore this lack of automorphisms means the universal $T$-fixed sheaf $\E$ admits a $T$-linearisation. (For any $\lambda\in T$ we get a \emph{unique} isomorphism $\phi_\lambda\colon(E,\phi,s)\to\lambda^*(E,\phi,s)=(E,\lambda\phi,s)$. Uniqueness then implies that $\phi_\lambda\circ\phi_\mu=\phi_{\lambda\mu}$.) Tensoring by its highest weight, we may assume that
\beq{wtspace}
\E\=\bigoplus_{i=0}^{r-1}\E_i\otimes\t^{-i}
\eeq
with each of $\E_0$ and $\E_{r-1}$ \emph{nonzero} and all $\E_i$ flat over $\cP$. Since we are on the monopole branch, $r\ge2$.
%The $T$-invariant section $\mathsf s\in H^0(\E(n))$ lies in $H^0(\E_0(n))$.
The weight one $\Phi$ maps each $\E_i$ to $\E_{i+1}$, and $\mathsf s$ is $T$-fixed up to automorphisms of $\E$.

Fix a nonzero holomorphic 2-form
$$
0\ \ne\ \sigma\ \in\ H^0(K_S)
$$
and consider the trace-free endomorphism
\beq{id}
\dot\varphi\ :=\ \left(\id_{\;\E_{r-1}}-\,\frac{\rk(\E_{r-1})}{\rk(\E)}\id_{\;\E}\right)\!\otimes\sigma\ \in\ \Hom(\E,\E\otimes K_S\)\_0\otimes\t.
\eeq
It defines a family of Higgs triples\footnote{This flow is most easily understood in terms of sheaves on $X$ \cite[Section 5]{KKV}; for instance it takes the sheaf $\cO_{rS}$ at $t=0$ to $\cO_{(r-1)\Gamma_{-t\sigma/r}}\oplus\cO_{\Gamma_{t\sigma}}$ at time $t\ne0$, where $\Gamma_\sigma\subset K_S$ is the graph of $\sigma$.}
\beq{flow}
(\E,\Phi+t\dot\varphi,\mathsf{s}) \quad\text{over}\quad \cP\times\C_t,
\eeq
where $t$ is the parameter on $\C=\C_t$. This family is classified by a map
$$
\cP\times\C_t\ \To\ \cP^\perp_{\alpha,n}.
$$
Differentiating at $t=0$ (equivalently, restricting from $\C_t$ to $\Spec\C[t]/(t^2)$) we get a $\cP^\perp_{\alpha,n}$ vector field on $\cP$,
\beq{phidot}
\dot\varphi\ \in\ \Gamma\Big(\cP,\,T_{\cP^\perp_{\alpha,n}}\big|_{\cP}	\Big).
\eeq
By construction it has $T$ weight one. The symmetry of the obstruction theory 
$$
\big(R\Hom(I\udot,I\udot)\_\perp[1]\big)^\vee\cong R\Hom(I\udot,I\udot)\_\perp\otimes\t^{-1}[2]
$$
makes $\Omega_{\cP}\cong\mathrm{Ob}_{\;\cP}\otimes\t^{-1}$, so \eqref{phidot} is equivalent to a weight zero cosection
\beq{cosec}
\mathrm{Ob}_{\;\cP}\ \To\ \cO_{\cP}.
\eeq
Its image is an ideal sheaf
$$
I_{Z(\dot\varphi)}\ \subseteq\ \cO_{\cP},
$$ 
so \eqref{cosec} has a well-defined zero scheme $Z(\dot\varphi)$.

\begin{thm} \label{cosecinj} Pick $0\ne\sigma\in H^{2,0}(S)$ and consider the cosection \eqref{cosec} on a connected component $\cP\subset(\cP^\perp_{\alpha,n})^T$ of the monopole branch.

At any closed point of the zero scheme $Z(\dot\varphi)\subset\cP$ the maps $\phi\colon E_i\to E_{i+1}$ are both injective and generically surjective\footnote{That is, they have torsion cokernel.} on $S$ for each $i=0,\ldots,r-2$. In particular if $Z(\dot\varphi)\neq\emptyset$ then $\rk \E_0=\rk\E_1=\cdots=\rk\E_{r-1}$.
\end{thm}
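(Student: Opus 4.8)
The plan is first to unwind the cosection at a closed point $p\in Z(\dot\varphi)$, whose underlying sheaf has $T$-weight decomposition $\bigoplus_{i=0}^{r-1}\E_i\t^{-i}$ with torsion-free graded pieces $\E_i$ (the underlying sheaf of a Joyce-Song pair on $S$ is torsion-free, and each $\E_i$ is a summand of it), and with $\phi$ restricting to $\phi_i\colon\E_i\to\E_{i+1}\otimes K_S$. By construction the cosection \eqref{cosec} is the image of the tangent vector \eqref{phidot} under the perfect pairing — supplied by the symmetry of the obstruction theory — between weight-zero obstructions and weight-one deformations; so $p\in Z(\dot\varphi)$ exactly when the class of $\dot\varphi$ vanishes in the weight-one summand of the deformation space of $(\E,\Phi,\mathsf{s})$. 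As $\dot\varphi$ only moves the Higgs field, this vanishing is the statement that $\dot\varphi$ is a coboundary: there exist $\psi_i\in\Hom(\E_i,\E_{i-1})$, $1\le i\le r-1$ (put $\psi_0=\psi_r=0$), with $[\phi,\psi]=\dot\varphi$, i.e.
$$
\phi_{i-1}\psi_i-\psi_{i+1}\phi_i\=
\begin{cases}\ -c\,\sigma\,\id_{\E_i}&(0\le i\le r-2),\\[1mm]\ (1-c)\,\sigma\,\id_{\E_{r-1}}&(i=r-1),\end{cases}
\qquad c:=\frac{\rk\E_{r-1}}{\rk\E}\in(0,1).
$$

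\textbf{Step 2: injectivity of the $\phi_i$.} Next I would restrict to the open set $U:=S\setminus C$, where $C:=Z(\sigma)\in|K_S|$ and $\sigma$ trivialises $K_S$, and introduce the ``ladder'' endomorphisms $A_i:=\psi_{i+1}\phi_i$ and $B_i:=\phi_{i-1}\psi_i$ of $\E_i|_U$. The $i=0$ equation gives $A_0=c\,\id$; the equations for $0<i<r-1$ give $A_i=B_i+c\,\id$; and for each $x\in U$ the linear maps $B_i(x)$ and $A_{i-1}(x)$ have the same non-zero eigenvalues. An induction then yields $\mathrm{spec}\,A_i(x)\subseteq\{c,2c,\dots,(i+1)c\}$ for all $x$, a set not containing $0$, so $A_i$ is invertible on $U$ for $0\le i\le r-2$. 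Hence $\phi_i|_U$ is a fibrewise-injective map of bundles, and since $\ker\phi_i\subset\E_i$ is then supported on $C$ and $\E_i$ is torsion-free, $\phi_i$ is injective on all of $S$.

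\textbf{Step 3: equal ranks, hence generic surjectivity.} The $i=r-1$ equation is $B_{r-1}=\phi_{r-2}\psi_{r-1}=(1-c)\,\id$ on $U$, a non-zero scalar since $c\ne1$; so $\phi_{r-2}$ is also surjective on $U$, hence an isomorphism on $U$ (being injective of the same rank), which gives $\rk\E_{r-2}=\rk\E_{r-1}$ and, by conjugation, $A_{r-2}=(1-c)\,\id$. I would then iterate downwards: given $A_i=(1-(r-1-i)c)\,\id$, the $i$-th equation gives $B_i=A_i-c\,\id=(1-(r-i)c)\,\id$, which is non-zero — otherwise $\phi_{i-1}\psi_i=0$ while $A_{i-1}=\psi_i\phi_{i-1}$ is invertible by Step 2, forcing $\psi_i=0$ and so $A_{i-1}=0$, absurd — hence $B_i$ is invertible on $U$, $\phi_{i-1}$ is surjective on $U$, $\rk\E_{i-1}=\rk\E_i$, and $A_{i-1}=(1-(r-i)c)\,\id$ by conjugation. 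Running this down to $i=1$ shows $\rk\E_0=\dots=\rk\E_{r-1}$; each $\phi_i$ is then an injection of bundles of the same rank, hence generically an isomorphism, i.e.\ injective and generically surjective on $S$ — which is the theorem. (For $r=2$ only the relations $A_0=c\,\id$ and $B_1=(1-c)\,\id$ occur and the conclusion is immediate.)

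\textbf{The main obstacle.} The one genuinely delicate step is Step 1: identifying the zero scheme of the cosection built in \eqref{id}--\eqref{cosec} with solvability of $[\phi,\psi]=\dot\varphi$. This requires matching the Serre-duality pairing of the symmetric obstruction theory with the concrete description of first-order Higgs deformations, and tracking $T$-weights throughout. I expect this to be where the real work lies; note that the Joyce-Song section can only tighten the condition, so the inclusion $Z(\dot\varphi)\subseteq\{[\phi,\psi]=\dot\varphi\ \text{solvable}\}$ — which is all that Steps 2--3 use — is safe. Once Step 1 is in place, Steps 2--3 are a short, self-contained computation, the only mild care being to re-use inside the downward induction of Step 3 the invertibility of the $A_i$ established in Step 2.
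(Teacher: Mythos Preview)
Your proof is correct and takes a genuinely different route from the paper's. Both arguments start from the same reduction (your Step~1, the paper's opening paragraph): if $p\in Z(\dot\varphi)$ then the weight-one endomorphism $\dot\varphi$ must lie in the image of $[\,\cdot\,,\phi]$, i.e.\ the system $[\phi,\psi]=\dot\varphi$ is solvable with $\psi=\bigoplus_i\psi_i$, $\psi_i\in\Hom(E_i,E_{i-1})$. From there the two proofs diverge.

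The paper argues by contradiction in two cases. If some $\phi_i$ fails to be injective it assembles the kernels of the iterates $\phi^{r-1-i}|_{E_i}$ into a sub-Higgs-sheaf $K$, builds a local $\phi$-invariant splitting $E|_U\cong K|_U\oplus(E/K)|_U$, and observes that $\dot\varphi$ has nonzero trace on $K|_U$ --- impossible for a commutator. If all $\phi_i$ are injective but some has positive-rank cokernel, it instead uses the proper sub-Higgs-bundle $\bigoplus_i\phi^i(E_0)|_U$ and the same trace obstruction.

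You instead solve the commutator equations directly on $U=S\setminus Z(\sigma)$. The upward eigenvalue-tracking of Step~2 (using that $\psi_{i+1}\phi_i$ and $\phi_i\psi_{i+1}$ share nonzero spectrum) forces $\mathrm{spec}\,A_i\subset\{c,2c,\dots\}$, hence each $\phi_i$ is injective. The downward induction of Step~3 then pins each $A_i$ to a scalar and bootstraps surjectivity, with the contradiction ``$B_i=0$ yet $A_{i-1}$ invertible'' replacing the paper's trace argument. Your approach avoids constructing the local $\phi$-invariant splittings (which in the paper require an inductive shrinking of $U$) and is pure linear algebra; the paper's approach is more conceptual and makes transparent \emph{why} the obstruction is a trace on a sub-Higgs piece. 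Both yield slightly more than stated: the $\phi_i$ are isomorphisms over $U$, which is exactly what is exploited in the K3 case.
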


\begin{proof} First we discuss how basechange works in this setting. If $p=(E,\phi,s)$ is a point of $\cP$ with ideal $\mathfrak m\subset\cO_{\cP}$ then $\Omega_{\cP}|_p\cong\mathfrak m/\mathfrak m^2$ and \eqref{cosec} induces a map $\mathfrak m/\mathfrak m^2\to\cO_p\cong\C$ which is zero if and only if $p\in Z(\dot\varphi)$. Therefore $p\in Z(\dot\varphi)$ if and only if the vector field \eqref{phidot} restricted to $p$ maps to zero under the natural map (which need not be an isomorphism!)
\beq{tanmap}
T_{\cP}\big|_p\ \To\ \(\mathfrak m/\mathfrak m^2\)^*.
\eeq
But $\(\mathfrak m/\mathfrak m^2\)^*$ is described by deformation theory \cite{TT2} as $\Ext^1_X(I\udot,I\udot)\_\perp$. By forgetting the section $s$, this maps (see \cite[Equation 6.11]{TT2}, for instance)
\beq{tanmap2}
\Ext^1_X(I\udot,I\udot)\_\perp\ \To\ \Ext^1_X(\cE_\phi,\cE_\phi)\_\perp
\eeq
to the first order deformation space of $(E,\phi)$, which sits in the exact sequence
\beq{amicseq}
0\To\Hom_S(E,E)\_0\rt{[\ \cdot\ ,\phi]}\Hom_S(E,E\otimes K_S)\_0\;\t\To\Ext^1_X(\cE_\phi,\cE_\phi)\_\perp
\eeq
of \cite[Equations 2.20 and 5.32]{TT1}. By \eqref{id} the image of the vector field \eqref{phidot} under first \eqref{tanmap} and then \eqref{tanmap2} can be seen in this exact sequence as the image of 
\beq{id2}
\left(\id_{E_{r-1}}-\,\frac{\rk(E_{r-1})}{\rk(E)}\id_{E}\right)\!\otimes\sigma\ \in\ \Hom_S(E,E\otimes K_S\)\_0
\eeq
in the second term. So to prove that $p\not\in Z(\dot\varphi)$ it is sufficient to show that \eqref{id2} is not in the image of the first term of the sequence \eqref{amicseq}.\medskip

Assume first that at the point $p=(E=\oplus_{i=0}^{r-1}E_i\t^{-i},\phi,s)\in\cP$, one of the $\phi\colon E_i\to E_{i+1}$ fails to be injective on $S$. Letting $K_i$ denote the kernel of
\beq{phipower}
\phi^{r-1-i}\big|_{E_i}\,\colon\ E_i\ \To\ E_{r-1},
\eeq
our assumption is that
$$
K\ :=\ \bigoplus_{i=0}^{r-2}K_i\ \ne\ 0.
$$
Since $(E,\phi)$ is semistable it is torsion free, so all of the $K_i\subset E_i$ are torsion free. There is an open subset $U\subset S$ over which they are all locally free, $\phi$ has constant rank, and --- we claim --- there is a $\phi$-invariant splitting
\beq{split}
E|_U\ \cong\ K|_U\oplus(E/K)|_U.
\eeq
To prove the claim, we may shrink $U$ if necessary and then split $K_0\subset E_0$ over it. Then we proceed inductively. At the $i$th stage we have split $E_i=K_i\oplus E_i/K_i$ over $U$, which we use to form $\phi(E_i/K_i)\subset E_{i+1}$. Since its intersection with $K_{i+1}\subset E_{i+1}$ is 0 we have $\phi(E_i/K_i)\oplus K_{i+1}\subset E_{i+1}$. Further shrinking $U$ if necessary we can find a complement $C_i$ to this subbundle. Then $C_i\oplus\phi(E_i/K_i)$ gives the required complement to $K_{i+1}\subset E_{i+1}$.

Therefore \eqref{split} defines a local splitting of Higgs bundles over $U$. The endomorphism $\dot\varphi|_U$ of \eqref{id2} acts on the first summand as
$$
-\frac{\rk(E_{r-1})}{\rk(E)}\id_K\!\otimes\;\sigma \quad\text{of trace }
-\frac{\rk(E_{r-1})\rk(K)}{\rk(E)}\;\sigma\ \in\,H^0(K_S|_U).
$$
Therefore it is not in the image of the map $[\ \cdot\ ,\phi]$ in the sequence \eqref{amicseq}, since commutators are trace-free. That is, it defines a nonzero deformation of the Higgs pair $(K,\phi|_K)$ over $U$. Therefore the cosection \eqref{cosec} is nonzero at $p$.\medskip

So now we turn to the case where $K=0$ but at least one of the maps \eqref{phipower} has cokernel of rank $>0$. Then there is an open set $U\subset S$ over which the cokernels are locally free, $\rk(\phi)$ is constant, and --- we claim --- there is a $\phi$-invariant splitting of locally free sheaves
$$
E|_U\ \cong\ \bigoplus_{i=0}^{k-1}(\phi^iE_0|_U\oplus C_i)
$$
with $C_{k-1}\ne0$. We prove the claim inductively, with base case $C_0:=0$. At the $i$th stage, possibly after shrinking $U$ as usual, we pick a complement $D_{i+1}$ to $\phi^{i+1}(E_0)$ and then set $C_{i+1}=D_{i+1}\oplus\phi(C_i)$.

Therefore $\bigoplus_{i=0}^{k-1}\phi^iE_0|_U$ is a proper sub Higgs bundle of $(E,\phi)|_U$. On restriction to it, the endomorphism \eqref{id2} acts as
$$
\left(\id_{\phi^{k-1}(E_0)}-\,\frac{\rk(E_{k-1})}{\rk(E)}\id\right)\otimes\;\sigma
$$
whose trace is $<\rk(\phi^{k-1}(E_0))-\rk(E_{k-1})=-\rk(C_{k-1})<0$. Therefore it is not in the image of the map $[\ \cdot\ ,\phi]$ in the sequence \eqref{amicseq}. That is, it defines a nonzero deformation of the sub Higgs bundle over $U$, and again the cosection \eqref{cosec} nonzero at $p=(E,\phi,s)$.
\end{proof}

These results can be strengthened, for instance by showing that at points of $Z(\dot\varphi)$ the cokernels of $\phi\colon E_i\to E_{i+1}$  must have support on the zero divisor of $\sigma$ --- see Theorem \ref{K3cosec} below for K3 surfaces, for example. But since 
a nowhere zero cosection on $\cP$ forces its contribution to the $K$-theoretic invariant to vanish (by an easy special case of \cite{KL}; see for instance \cite[Proposition 3.2]{KL}) the above result is enough to prove that \emph{most} components $\cP$ do not contribute to the invariants. In particular the most obvious special case is the following.

\begin{cor} \label{corsec}
If $h^{2,0}(S)>0$ and the multi-rank is non-constant,
$$
(\rk E_0,\rk E_1,\ldots,\rk E_{r-1})\ \ne\ (a,a,\ldots,a) \quad\forall a\in\N
$$
then $\cP$ does not contribute to the refined invariants.
\end{cor}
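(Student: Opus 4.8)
The plan is to read this off as the contrapositive of Theorem \ref{cosecinj} combined with cosection vanishing in $K$-theory. First I would use the hypothesis $h^{2,0}(S)=p_g(S)>0$ to pick a nonzero two-form $0\ne\sigma\in H^0(K_S)$ and form, exactly as in \eqref{id}--\eqref{cosec}, the weight-one trace-free endomorphism $\dot\varphi$ of the universal object over $S\times\cP$, the induced weight-zero cosection $\mathrm{Ob}_{\cP}\to\cO_{\cP}$ of the fixed obstruction sheaf, and its zero scheme $Z(\dot\varphi)\subseteq\cP$. Since the image of this cosection is the ideal $I_{Z(\dot\varphi)}$, surjectivity of the cosection is equivalent to $Z(\dot\varphi)=\emptyset$.

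Next I would apply Theorem \ref{cosecinj}, which asserts that $Z(\dot\varphi)\ne\emptyset$ forces $\rk\E_0=\cdots=\rk\E_{r-1}$. Contrapositively, if the multi-rank $(\rk\E_0,\ldots,\rk\E_{r-1})$ is not of the form $(a,\ldots,a)$ then $Z(\dot\varphi)=\emptyset$, so the cosection is surjective on all of $\cP$. By cosection localisation --- the easy special case of \cite{KL} quoted above, e.g. \cite[Proposition 3.2]{KL} --- a surjective cosection of the obstruction sheaf kills the virtual structure sheaf, so $\cO^{\vir}_{\cP}=0$ in localised $T$-equivariant $K$-theory. Tensoring this zero class by the line bundle $K^{1/2}_{\vir}|_{\cP}$ and by the invertible class $1/\Lambda\udot(N^{\vir})^\vee$ leaves it zero, so the contribution of $\cP$ to \eqref{eqref} in the stable case, and to the corresponding summand of \eqref{desk}/\eqref{desk2} in the semistable case, vanishes; and in the stable$\,=\,$semistable situation the same argument applies verbatim with $\cN\subseteq(\cN^\perp_\alpha)^T$ in place of $\cP$, as remarked before Theorem \ref{cosecinj}.

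There is no genuine obstacle here --- all the content sits in Theorem \ref{cosecinj}, which is already proved. The only points to check are bookkeeping: that the cosection constructed above is literally a cosection of the \emph{fixed} obstruction theory entering \eqref{tloc} (so that its vanishing is visible after localisation to the $T$-fixed locus); that it is $T$-invariant, which is immediate since the flow \eqref{flow} has weight one, giving $\dot\varphi$ and hence the cosection their stated weights; and that the phrase ``$\cP$ does not contribute to the refined invariants'' is interpreted correctly as vanishing of the associated term in the localised formula. Once these identifications are in place the corollary is immediate.
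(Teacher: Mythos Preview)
Your proposal is correct and matches the paper's own argument essentially verbatim: the paper also deduces the corollary directly from the contrapositive of Theorem \ref{cosecinj} together with the cosection-localisation vanishing of \cite[Proposition 3.2]{KL}, with no additional ingredients.
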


\begin{rmk}\label{urmk}
The cosection does not rule out the contribution of \emph{all} components. For instance, when $\Phi\colon\E_i\to\E_{i+1}$ is an isomorphism for $i=0,\ldots,r-2$, the cosection \eqref{cosec} vanishes identically. This is because the endomorphism \eqref{id} can be written
$$
\dot\varphi\=[A,\Phi],
$$
where $A\colon\E\to\E$ acts as zero on $\E_0$ and as $\Phi^{-1}\colon\E_i\to\E_{i-1}$ on any other summand. Therefore, in the exact sequence
\beq{tmap}
0\To\hom_{p\_S}(\E,\E)\_0\rt{[\ \cdot\ ,\Phi]}\hom_{p\_S}(\E,\E\otimes K_S)\_0\;\t\To\ext^1_{p\_X}\!(\curly E_\Phi,\curly E_\Phi)\_\perp
%\To\ext^1_p(\E,\E)\_0
\eeq
of \cite[Equations 2.20 and 5.32]{TT1}, we see that $\dot\varphi$ maps to zero in the deformation space of $\curly E_\Phi$ on $p\_X\colon X\times\cP\to\cP$. That is, the first order deformation of $\curly E_\Phi$ (or equivalently $(\E,\Phi)$) is zero. Since $\mathsf s$ is constant in the flow \eqref{flow}, we see the tangent vector \eqref{phidot} vanishes identically so $Z(\dot\varphi)=\cP$.
\end{rmk}

\subsection{K3 surfaces}\label{kthree}
The cosection \eqref{cosec} gives the strongest results on (polarised) $K3$ surfaces $(S,\cO_S(1))$. Just as for the stable pairs in \cite{KKV}, it will show that the only Joyce-Song pairs on $S\times\C$ which contribute nontrivially to the refined Vafa-Witten invariants are those which have \emph{constant thickening} in the $\C$-direction. That is, they are pulled back from $S$ then tensored by $\cO_{rS}:=\cO_X/I_{S\subset X}^r$ for some $r>0$:
$$
\cE\ =\ \rho^*E_0\otimes\cO_{rS}.
$$
In Higgs language these correspond to the following.

\begin{defn} \label{unif} We call a point $(E,\phi,s)\in(\cP^\perp_{\alpha,n})^T$ \emph{uniform} if the maps \eqref{phipower} are all isomorphisms. Equivalently,
$$
E\ \cong\ \bigoplus_{i=0}^{k-1}\phi^i(E_0)\;\t^{-i}.
$$
%and $s\in\Gamma(E_0(n))$.
\end{defn}

\begin{thm} \label{K3cosec} A connected component $\cP\subset(\cP^\perp_{\alpha,n})^T$ contains a uniform point if and only if all its points are uniform, if and only if the cosection \eqref{cosec} vanishes identically on $\cP$. 

Otherwise the cosection is nowhere zero on $\cP$.
In particular, non-uniform components $\cP$ contribute zero to the refined invariants.
\end{thm}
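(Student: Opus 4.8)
The plan is to prove that the zero locus $Z(\dot\varphi)$ of the cosection \eqref{cosec} coincides as a set with the locus of uniform points of $\cP$, that this locus is open as well as closed, and hence --- $\cP$ being connected --- that it is empty or all of $\cP$. The inclusion of the uniform locus into $Z(\dot\varphi)$ is the easy half: if $p=(E,\phi,s)$ is uniform then $\dot\varphi=[A,\phi]$ for the \emph{global}, trace-free endomorphism $A$ of $E$ that lowers the grading and acts as $-\frac ir\,\phi^{-1}\colon E_i\to E_{i-1}$ on each summand ($\phi^{-1}$ being a genuine map of sheaves because $p$ is uniform); exactly as in Remark \ref{urmk}, a commutator deformation is infinitesimally trivial --- the section $\mathsf s$ moves only by a global endomorphism, hence trivially --- so the tangent vector \eqref{phidot} vanishes at $p$ and $p\in Z(\dot\varphi)$.

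For the reverse inclusion, fix $p\in Z(\dot\varphi)$. By Theorem \ref{cosecinj}, every $\phi\colon E_i\to E_{i+1}$ is injective with torsion cokernel and the $E_i$ all have equal rank; and by the sufficient criterion established in the proof of that theorem, $p\in Z(\dot\varphi)$ forces the class \eqref{id2} into the image of $[\,\cdot\,,\phi]\colon\Hom_S(E,E)_0\to\Hom_S(E,E\otimes K_S)_0$. Trivialise $K_S\cong\cO_S$ by the nowhere-vanishing $\sigma$ --- the only place the K3 hypothesis enters --- and write $\dot\varphi=[\psi,\phi]$. Since $\dot\varphi$ is diagonal for the grading $E=\bigoplus_iE_i$ while $\phi$ raises the grading by one, we may take $\psi$ to lower the grading by one, $\psi=\bigoplus_i\psi_i$ with $\psi_i\colon E_i\to E_{i-1}$; the relation $[\psi,\phi]=\dot\varphi$ then reads
\[
\psi_{i+1}\circ\phi\ -\ \phi\circ\psi_i\ =\ \Big(\delta_{i,r-1}-\frac1r\Big)\id_{E_i}\qquad(i=0,\dots,r-1),\quad\psi_0=\psi_r=0 .
\]
The case $i=0$, namely $\psi_1\phi=-\frac1r\id_{E_0}$, shows $\phi\colon E_0\to E_1$ is split injective; since $\rk E_0=\rk E_1$ its cokernel is a torsion-free direct summand of rank $0$, hence zero, so $\phi\colon E_0\to E_1$ is an isomorphism and $\psi_1=-\frac1r\phi^{-1}$. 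Substituting into the case $i=1$ and iterating, an easy induction on $i$ shows every $\phi\colon E_i\to E_{i+1}$ is an isomorphism, with $\psi_{i+1}=-\frac{i+1}r\phi^{-1}$ (the case $i=r-1$ is then an automatic consistency check). Hence $p$ is uniform, and $Z(\dot\varphi)$ is exactly the uniform locus.

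It remains to run the topology. The uniform locus is closed, being $Z(\dot\varphi)$. It is also open: assuming it non-empty, pick a uniform point; the Hilbert polynomials of the $\E_i$ are constant on the connected $\cP$, so $\chi(E_i(m))=\chi(E_{i+1}(m))$ for all $m$ everywhere on $\cP$, and for such families the universal map $\E_i\to\E_{i+1}$ at a point is an isomorphism as soon as it is surjective, while surjectivity is an open condition since $S\times\cP\to\cP$ is proper. By connectedness the uniform locus is empty or all of $\cP$; this is the equivalence ``$\cP$ contains a uniform point if and only if every point of $\cP$ is uniform''. In the second case the $\E_i\to\E_{i+1}$ are fibrewise, hence globally, isomorphisms over $\cP$, so Remark \ref{urmk} applies and the cosection vanishes identically; conversely, if the cosection vanishes identically then $Z(\dot\varphi)=\cP$ and all points are uniform. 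If $\cP$ has no uniform point then $Z(\dot\varphi)=\emptyset$, i.e. the cosection is nowhere zero on $\cP$, and by the cosection-localisation principle (an easy special case of \cite{KL}; see \cite[Proposition 3.2]{KL}) the contribution of $\cP$ to the refined $K$-theoretic invariants vanishes.

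The main obstacle is the equivalence invoked in the second paragraph between ``$p\in Z(\dot\varphi)$'' and ``\eqref{id2} is a commutator''. This is not purely formal because \eqref{tanmap} and \eqref{tanmap2} need not be injective; one has to trace through the proof of Theorem \ref{cosecinj} and Remark \ref{urmk} and check that the deformations of $I\udot$ lost on forgetting the section arise only from a global endomorphism acting on $s$ --- hence are already zero in the tangent space --- so that vanishing of the cosection at $p$ is equivalent to the class \eqref{id2} dying in $\coker\big([\,\cdot\,,\phi]\big)\hookrightarrow\Ext^1_X(\cE_\phi,\cE_\phi)_\perp$. Granting that, the graded induction above is routine, using only that $\sigma$ has no zeros, so that $\sigma\cdot\id_{E_i}$ is invertible.
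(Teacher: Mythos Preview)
Your proof is correct, and your worry in the final paragraph is unfounded: the implication ``$p\in Z(\dot\varphi)\Rightarrow\eqref{id2}$ is a commutator'' is exactly the contrapositive of the sufficient criterion in the proof of Theorem \ref{cosecinj} (if \eqref{id2} is not in the image of $[\,\cdot\,,\phi]$ then its image in $\Ext^1_X(\cE_\phi,\cE_\phi)_\perp$ is nonzero, so the vector field \eqref{phidot} is nonzero under \eqref{tanmap}, so $p\notin Z(\dot\varphi)$). No extra analysis of \eqref{tanmap2} is needed for that direction, and the reverse direction is handled by Remark \ref{urmk} directly. Your graded induction solving $[\psi,\phi]=\dot\varphi$ is clean and correct.

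The paper takes a different and shorter route. Rather than solving the commutator equation, it proves a structural lemma: on a K3 surface, if $(E,\phi)$ is a semistable Higgs pair then $E$ itself is semistable. (Proof: the Harder--Narasimhan maximal destabiliser $F\subset E$ has $\Hom(F,E/F)=0$; since $K_S\cong\cO_S$ this gives $\Hom(F,(E/F)\otimes K_S)=0$, so $\phi$ preserves $F$, forcing $F=E$.) Then each $E_i$ is semistable of the same reduced Hilbert polynomial, so an injective map $E_i\to E_{i+1}$ with torsion cokernel is automatically an isomorphism --- no induction needed. This instantly upgrades Theorem \ref{cosecinj} to ``$p\in Z(\dot\varphi)\Rightarrow p$ uniform'' on K3. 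Your approach trades this structural input for an explicit algebraic computation using only that $\sigma$ is nowhere vanishing; it is more elementary but longer, and your openness argument (via constancy of Hilbert polynomials of the $\E_i$) is a nice substitute for the paper's one-line ``uniform is open''.
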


\begin{proof}
Firstly we claim that for any semistable Higgs pair $(E,\phi)$ on $S$ the underlying sheaf $E$ is also semistable.

Let $F\subset E$ be the first term of its Harder-Narasimhan filtration. Therefore $F$ is semistable with strictly larger reduced Hilbert polynomial (or ``Gieseker slope") than the other graded pieces of the filtration. Since those pieces are also semistable, it follows that
$$
\Hom(F,E/F)\=0.
$$
Therefore the Higgs field $\phi$ preserves $F$, so $F$ either strictly destabilises $(E,\phi)$ or is all of $E$. \medskip

In particular each $E_i$ in the weight space decomposition of $E$ is semistable (and so torsion-free) of the same reduced Hilbert polynomial. Therefore if $\phi\colon E_i\to E_{i+1}$ is injective and generically surjective, it is an isomorphism.

It therefore follows from Theorem \ref{cosecinj} that any closed point of the zero locus $Z(\dot\varphi)$ of the cosection is uniform in the sense of Definition \ref{unif}. Since being uniform is an open condition, while the zero locus $Z(\dot\varphi)$ is closed, a single uniform point makes the whole connected component $\cP$ uniform.
In this case the cosection vanishes identically on $\cP$ by Remark \ref{urmk}.
\end{proof}

\subsection{Refined multiple cover formula} When $\cO_S(1)$ is generic for charge $\alpha$ this leaves only the uniform components to calculate on:
\beq{unifor}
P^\perp_{\alpha,n}\=\sum_{r|\alpha}\int_{\Big[\cP^\perp_{\left(\frac\alpha r\right)^{\!\;r}\!,\,n}\Big]^{\vir}}\frac1{e(N^{\vir})}\,.
\eeq
Here $\cP^\perp_{\left(\frac\alpha r\right)^{\!\;r}\!,\,n}$ is the moduli space of uniform Joyce-Song pairs of charge $\alpha$ which are $r$-times thickened pairs of charge $\alpha/r$. They are determined by their restriction to $S$, giving an isomorphism
\beqa
\cP^\perp_{\left(\frac\alpha r\right)^{\!\;r}\!,\,n} &\cong& \cP^S_{\alpha/r,n}\,, \\
(\rho^*E\otimes\cO_{rS},\rho^*s) &{\ensuremath{\leftarrow\joinrel\relbar\joinrel\shortmid}}& (E,s).
\eeqa
Here $\cP^S_{\alpha/r,n}$ denotes the moduli space of Joyce-Song stable pairs $(E,s)$ on $(S,\cO_S(1))$ with charge $\alpha/r$: so $E$ is a Gieseker semistable sheaf on $S$ with total Chern class $\alpha/r$, and $s\in H^0(E(n))$ does not factor through any destabilising subsheaf of $E$.
% For this we fix some notation for integer partitions $\lam\vdash r$. These are sequences of nonnegative integers $\lambda_0\ge\lambda_1\ge\cdots\ge\lambda_n>0$ whose sum $|\lam|:=\sum_{i=0}^n\lambda_i$ is $r$. We use $\ell(\lam)=n+1$ for the length of the partition.
%
%For each $\lam\vdash r$ we get a component
%$$
%\cP_{\lam\alpha,n}\ \subset\ (\cP^\perp_{r\alpha,n})^T
%$$
%parameterising $T$-fixed Joyce-Song pairs whose underlying $T$-sheaf $E=\bigoplus E_i\t^{-i}$ on $S$ has charge $c(E_i)=\lambda_i\alpha$ for each $i=0,\ldots,\ell(\lam)-1$. By Theorem \ref{K3cosec} these components contribute nothing to the refined invariants unless the partition is \emph{uniform}:
%$$
%\lam\=(a,a,\ldots,a) \quad\text{where}\quad \ell(\lam)a=|\lam|=r.
%$$
%Furthermore, for such $\lam$ we have $\lam\;\alpha=(1^d)(a\alpha)$, where $(1^d):=(1,1,\ldots,1)$ of length $d:=\ell(\lam)=r/a$.
%So we need only calculate the contributions of the components
%with $\lam=(1^r)$\;,
%\beq{unifor}
%P^\perp_{\alpha,n}\=\sum_{r|\alpha}P^\perp_{(1^r)\frac1r\alpha,n\,}\ :=\ \sum_{r|\alpha}\int_{\big[\cP_{(1^r)\frac1r\alpha,n\,}\big]^{\vir}}\frac1{e(N^{\vir})}\,,
%\eeq
%which we will calculate using
%$$
%\cP^S_{\alpha,n}\ \cong\ \cP_{(1^r)\alpha,n}\ \subset\ (\cP^\perp_{r\alpha,n})^T.
%$$
We noted in Theorem \ref{K3cosec} this gives a bijection of sets. That this makes $\cP^S_{\alpha/r,n}$ \emph{scheme theoretically} isomorphic to a component of $(\cP^\perp_{\alpha,n})^T$ follows from the the deformation theory analysis \eqref{fixt} below, or by \cite[Lemma 1]{KKV}. \medskip

Fix $r$ and let $\alpha_0=\alpha/r$. Both $\cP^S:=\cP^S_{\alpha_0,n}$ and $(\cP^\perp_{r\alpha_0,n})^T$ are fine moduli spaces. We denote the two universal sheaves by
$$
\E_0\,\text{ on }\,S\times\cP^S \qquad\text{and}\qquad \curly E=\rho^*\E_0\otimes\cO_{rS\times\cP^S}\,\text{ on }\,X\times\cP^S,
$$
and the universal complexes made from the Joyce-Song pairs by
$$
I\udot_S\ :=\ \{\cO_S(-n)\To\E_0\} \qquad\text{and}\qquad I\udot_X\ :=\ \{\cO_X(-n)\To\curly E\},
$$
with $\cO(-n)$ in degree 0. As usual $p\_S\colon S\times\cP^S\to\cP^S$ and $p\_X\colon X\times\cP^S\to\cP^S$ denote the projections. Now let
$$
E\udot_S\ :=\ R\hom_{p\_S}(I\udot_S,\E_0)^\vee.
$$
Though we will not need to know it, this is the virtual cotangent bundle of the natural perfect obstruction theory on the moduli space $\cP^S$ of pairs on $S$; see \cite{KT1} for instance.

When considered instead as a moduli space of sheaves on $X$ (not yet imposing the centre-of-mass-zero condition), we get a the virtual cotangent bundle 
$$
R\hom_{p\_{X\!}}(I\udot_X,I\udot_X)_0^\vee[-1]\ \cong\ 
E\udot_S\otimes(\t^0\oplus\cdots\oplus\t^{r-1})\ \oplus\ (E\udot_S)^\vee[1]\otimes(\t^{-1}\oplus\cdots\oplus\t^{-r})
$$
by \cite[Proposition 4]{KKV}. (While that paper works with stable pairs, it uses none of their special properties --- the proof goes through verbatim for Joyce-Song pairs.)

To get the Vafa-Witten obstruction theory we remove $H^0(K_S)\otimes\t$ from the deformations (imposing the centre-of-mass-zero condition) and --- dually --- $H^2(\cO_S)$ from the obstructions, replacing $E_S\udot$ by the \emph{reduced} obstruction theory of \cite{KT1} (we do not prove this compatibility of obstruction theories since we do not need it; we only require the $K$-theory class of the virtual (co)tangent bundle).
The upshot is that the Vafa-Witten obstruction theory of \cite{TT2} is, on restriction to $\cP^S\subset\cP^\perp_{r\alpha_0,n\,}$, the $K$-theory class of
\begin{multline*}
\LL^{\vir}_{\cP^\perp_{r\alpha,n\!}}\Big|_{\cP^S}\ \cong\ 
\LL^{\red}_{\cP^S}\otimes(\t^0\oplus\cdots\oplus\t^{r-1})-(\t\oplus\cdots\oplus\t^{r-1})\\
-(\LL^{\red}_{\cP^S})^\vee\otimes(\t^{-1}\oplus\cdots\oplus\t^{-r})+(\t^{-2}\oplus\cdots\oplus\t^{-r}).
\end{multline*}
If we write
$$
\Psi\ :=\ \LL^{\red}_{\cP^S}\otimes(\t\oplus\cdots\oplus\t^{r-1})-(\t^2\oplus\cdots\oplus\t^{r-1})
$$
then
$$
\LL^{\vir}_{\cP^\perp_{r\alpha,n\!}}\Big|_{\cP^S}\ \cong\ \LL_S^{\red}+\Psi-\t-\Psi^\vee-(\LL_S^{\red})^\vee\t^{-r}+\t^{-r}.
$$
From this we read off
$$
K^{\frac12}_{\vir}\big|_{\cP^S}\ \cong\ \det(\Psi)\t^{-(r+1)/2}\det(\LL_S^{\red})\t^{r\vd/2},
$$
where $\vd:=\rk(\LL^{\red}_S)$ is the reduced virtual dimension of $\cP^S$,
\beq{fixt}
\Big(\LL^{\vir}_{\cP^\perp_{r\alpha,n\!}}\Big|_{\cP^S}\Big)^{\mathrm{fix}}\ \cong\ \LL_S^{\red},
\eeq
and
$$
(N^{\vir})^\vee\ \cong\ \Psi-\Psi^\vee-(\LL_S^{\red})^\vee\t^{-r}+\t^{-r}-\t.
$$
Therefore
\begin{multline*}
\frac{\cO^{\vir}_{\cP^S}\otimes K_{\vir}^{\frac12}\big|_{\cP^S}}{\Lambda\udot(N^{\vir})^\vee} \\
\=\frac{\det(\Psi)\otimes\Lambda\udot(\Psi^\vee)}{\Lambda\udot(\Psi)}
\Lambda\udot(\LL_S^{\red}\t^r)^\vee\!\det(\LL_S^{\red}\t^r)\t^{-\frac12(r\vd+r+1)}\frac{1-\t}{1-\t^{-r}}\cO^{\vir}_{\cP^S} \\
\=(-1)^{\rk(\Psi)}
(-1)^{\vd}\t^{-\frac12r\vd}\Lambda\udot(\LL_S^{\red}\t^r)\frac{(-1)}{[r]\_t}\cO^{\vir}_{\cP^S}
\end{multline*}
by two applications of the identity $\det(E)^*\otimes\Lambda\udot E\big/\Lambda\udot E^\vee=(-1)^{\rk(E)}$ of \eqref{indenti}.

Substituting $\rk(\Psi)=(r-1)\vd-(r-2)$ and $\vd\equiv\chi(\alpha_0(n))-1\pmod2$ and taking $\chi\_t$ gives the following contribution to the refined pairs invariants,
\beq{ansa1}
P^\perp_{(\alpha\_0)^r,n}\=\frac{(-1)^{\chi(r\alpha\_0(n))-1}}{[r]\_t}t^{-\frac12r\vd}\chi^{\vir}_{-t^r}(\cP^S_{\alpha\_0,n}),
\eeq
cf. \eqref{shifted}.
This gives a refined multiple cover formula under any of the conditions
\begin{enumerate}
\item $\alpha_0=\alpha/r$ is primitive, or
\item all semistable sheaves of class $\alpha_0$ on $S$ are stable, or
\item Conjecture \ref{pechconj} holds for $\alpha_0$.
\end{enumerate}
Notice that $(1)\so(2)$ while Proposition \ref{s-ss} shows $(2)\so(3)$.

\begin{thm}\label{ball}
Suppose any of (1),\,(2) or (3) holds for the charge $\alpha_0$.
Then the contribution \eqref{ansa1} of uniform Joyce-Song pairs satisfies Conjecture \ref{pechconj} for all $r$. The resulting refined Vafa-Witten invariants are given by the multiple cover formula
\beq{refmc}
\VW_{(\alpha_0)^r}(t)\=\frac{\VW_{(\alpha\_0)^1}(t^r)}{[r]^2_t}\,.
\eeq
Furthermore, when $\alpha_0$ is primitive, setting $d:=1-\frac12\chi\_S(\alpha_0,\alpha_0)$ we have
$$
\VW_{(\alpha\_0)^r}(t)\=\frac{t^{-rd}\chi\_{-t^r}(\Hilb^dS)}{[r]_t^2}\,.
$$
\end{thm}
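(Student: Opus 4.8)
\textbf{Proof proposal for Theorem \ref{ball}.}
The plan is to feed \eqref{ansa1} into Conjecture \ref{pechconj} and then read off the refined Vafa-Witten invariants by Möbius inversion, exactly as is done numerically in \cite{TT2}. First I would observe that the uniform-component contributions \eqref{unifor} are indexed by the divisors $r\mid\alpha$, and for each such $r$ we have the explicit formula \eqref{ansa1}
$$
P^\perp_{(\alpha_0)^r,n}(t)\=\frac{(-1)^{\chi(r\alpha_0(n))-1}}{[r]\_t}\,t^{-\frac12 r\vd}\,\chi^{\vir}_{-t^r}\!\big(\cP^S_{\alpha_0,n}\big),
$$
where $\alpha_0=\alpha/r$ and $\vd=\rk\LL^{\red}_S$ is the reduced virtual dimension. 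Under any of the hypotheses (1), (2), (3), the charge $\alpha_0$ satisfies Conjecture \ref{pechconj}, so the $\deg K_S<0$-style pairs formula (here in the $p_g>0$ shape \eqref{desk2}, since $K3$ has $H^{0,2}\ne0$) applies to each $\cP^S_{\alpha_0,n}$: I would invoke either Theorem \ref{VW=J}/Proposition \ref{s-ss} for cases (2),(3), or the fact that for primitive $\alpha_0$ there are no strictly semistables so (1)$\Rightarrow$(2). In all three cases $\cP^S_{\alpha_0,n}$ is (an open neighbourhood in a) $(-1)$-shifted cotangent bundle situation and Proposition \ref{tstar-1} gives $P^\perp_{\alpha_0,n}(t)=(-1)^{\chi(\alpha_0(n))-1}[\chi(\alpha_0(n))]\_t\VW_{\alpha_0}(t)$ with $\VW_{\alpha_0}(t)$ expressed via $\chi^{\vir}_{-t}(\cP^S_{\alpha_0,n})$.

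The key algebraic step is then a substitution identity: replacing $t$ by $t^r$ in the displayed relation for charge $\alpha_0$ gives $\chi^{\vir}_{-t^r}(\cP^S_{\alpha_0,n})$ in terms of $\VW_{\alpha_0}(t^r)$ and a quantum integer $[\chi(\alpha_0(n))]_{t^r}$. Plugging this into \eqref{ansa1}, the powers of $t$ from the $t^{-\frac12 r\vd}$ prefactor and the $t^{-\frac12\cdot}$ inside the $\chi^{\vir}$-formula combine (using $\vd\equiv\chi(\alpha_0(n))-1\pmod 2$, already recorded in the excerpt) to produce precisely $(-1)^{\chi(r\alpha_0(n))-1}[\chi(r\alpha_0(n))]\_t$ times $\VW_{\alpha_0}(t^r)/[r]_t^2$, once one uses the elementary identity $[r]\_t\,[m]_{t^r}=[rm]\_t$ for the quantum integers of \eqref{qq} together with $\chi(r\alpha_0(n))=r\chi(\alpha_0(n))$ (valid because $\cP^S$ is a $\PP$-bundle and the Hilbert polynomial is additive). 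This exhibits $P^\perp_{(\alpha_0)^r,n}(t)$ in the shape $(-1)^{\chi(r\alpha_0(n))-1}[\chi(r\alpha_0(n))]\_t$ times a function of $t$ and $r$ independent of $n$, which is exactly the form required by \eqref{desk2} of Conjecture \ref{pechconj} for the charge $r\alpha_0=\alpha$; summing over divisors $r\mid\alpha$ and matching with \eqref{unifor} (there are no non-uniform contributions, by Theorem \ref{K3cosec}) we conclude the Conjecture holds and
$$
\VW_{(\alpha_0)^r}(t)\=\frac{\VW_{(\alpha_0)^1}(t^r)}{[r]_t^2}\,.
$$

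For the last sentence I would specialise to primitive $\alpha_0$. Then $\cP^S_{\alpha_0,n}$ is smooth of (reduced) dimension $\vd=2-\chi\_S(\alpha_0,\alpha_0)+\chi(\alpha_0(n))-1$ minus the $\PP$-bundle fibre dimension; more to the point, the moduli space $\cN^\perp_{\alpha_0}$ of stable Higgs pairs on $S$ with primitive charge $\alpha_0$ and $K_S=0$ is a single point's worth of deformation data away from a moduli space of stable sheaves, which by the standard derived-category/Mukai-lattice argument is deformation equivalent to $\Hilb^d S$ with $d=1-\tfrac12\chi\_S(\alpha_0,\alpha_0)$ (the holomorphic symplectic resolution picture). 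By deformation invariance (Proposition \ref{tt-1}) and Proposition \ref{tstar-1}, $\VW_{(\alpha_0)^1}(t)=(-1)^{\vd(\Hilb^d S)}t^{-d}\chi^{\vir}_{-t}(\Hilb^d S)=t^{-d}\chi_{-t}(\Hilb^d S)$, the sign being even since $\Hilb^d S$ has even dimension $2d$ and its virtual $\chi\_y$-genus equals the ordinary one (being smooth with trivial canonical obstruction on a $K3$). Substituting into the multiple cover formula gives
$$
\VW_{(\alpha_0)^r}(t)\=\frac{t^{-rd}\chi_{-t^r}(\Hilb^d S)}{[r]_t^2}\,.
$$
I expect the main obstacle to be the careful bookkeeping of the half-integer powers of $t$ and the orientation/sign conventions: one must check that the $K^{1/2}_{\vir}$ twist, the $\t^{-\frac12 r\vd}$ normalisations, and the $(-1)^{\chi(\cdots)}$ factors coming from $\Lambda\udot$ of odd-rank bundles all line up so that the output is genuinely of the form \eqref{desk2} rather than merely equal to it up to a stray sign or power of $t^{1/2}$ — and, in the primitive case, verifying that the deformation from $\cN^\perp_{\alpha_0}$ to $T^*[-1]\Hilb^d S$ is valid (equivalently that primitivity really does force the moduli space of sheaves to be the expected holomorphic symplectic deformation of $\Hilb^d$), which is where one leans on the $K3$ hypothesis most heavily.
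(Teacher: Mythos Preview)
Your proposal is correct and follows essentially the same route as the paper: apply the assumed Conjecture \ref{pechconj} for $\alpha_0$ to the $r=1$ case of \eqref{ansa1} to extract $\VW_{(\alpha_0)^1}(t)$ in terms of $\chi^{\vir}_{-t}(\cP^S_{\alpha_0,n})$, substitute $t\mapsto t^r$, feed back into \eqref{ansa1} using the quantum integer identity $[r]_t[\chi]_{t^r}=[r\chi]_t$, and match against \eqref{desk2}; then for primitive $\alpha_0$ use the $\PP^{\chi(\alpha_0(n))-1}$-bundle structure of $\cP^S_{\alpha_0,n}$ over the smooth moduli space $\cM_S$ (deformation equivalent to $\Hilb^dS$) to compute $\VW_{(\alpha_0)^1}(t)=t^{-d}\chi_{-t}(\Hilb^dS)$.

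A few small points of tidying: no M\"obius inversion or summation over divisors is needed --- each $r$ is handled separately and matched directly to \eqref{desk2}; the reference to Theorem \ref{VW=J} (which is about $\deg K_S<0$) is not relevant here, and Proposition \ref{tstar-1} is not what gives you the relation $P^\perp_{\alpha_0,n}(t)=(-1)^{\chi(\alpha_0(n))-1}[\chi(\alpha_0(n))]_t\VW_{\alpha_0}(t)$ --- that is precisely the shape \eqref{desk2} of the Conjecture you are assuming for $\alpha_0$. In the primitive case the paper argues via the $\PP$-bundle pushdown $\chi_{-t}(\cP^S)=(1+t+\cdots+t^{\chi(\alpha_0(n))-1})\chi_{-t}(\Hilb^dS)$ rather than invoking $T^*[-1]$; this is cleaner and avoids the slightly tangled description you give.
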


\begin{proof}
Since we are assuming Conjecture \ref{pechconj} holds for $\alpha_0$, we may apply \eqref{desk2} to the $r=1$ instance of \eqref{ansa1} to deduce
\beq{1case}
\VW_{(\alpha\_0)^1}(t)\=t^{-\frac12\vd}\frac{\chi^{\vir}_{-t}(\cP^S)}{[\chi(\alpha_0(n))]\_t}\,.
\eeq
Combining this with the identity $[\chi]\_{t^r}[r]\_t=[r\chi]\_t$, we rewrite \eqref{ansa1} as
\beq{ansa}
(-1)^{\chi(r\alpha\_0(n))-1}[\chi(r\alpha_0(n))]\_{t\,}\frac{\VW_{(\alpha\_0)^1}(t^r)}{[r]^2_t}\,.
\eeq
Therefore \eqref{desk2} holds for all $k>1$ with
\[
\VW_{(\alpha\_0)^r}(t)\=\frac{\VW_{(\alpha\_0)^1}(t^r)}{[r]^2_t}\,.
\]
When $\alpha_0$ is primitive, $\cP^S=\cP^S_{\alpha\_0,n}$ is a $\PP^{\chi(\alpha_0(n))-1}$-bundle over the moduli space $\cM_S$ of (semistable$\,=\,$stable) sheaves on $S$ of class $\alpha_0$. In turn, $\cM_S$ is smooth with $\LL^{\red}_S\cong\Omega_{\cM_S}$ and deformation equivalent to $\Hilb^dS$, where $d=1-\frac12\chi\_S(\alpha_0,\alpha_0)$. So pushing $\Lambda\udot(\Omega_{\cP^S}\t)$ down to $\cM_S$ and then taking $\chi\_t$ gives, just as in \eqref{quant},
$$
\chi\_{-t}(\cP^S)\=(1+t+\cdots+t^{\chi(\alpha\_0(n))-1})\chi\_{-t}(\Hilb^dS).
$$
Substituting this into \eqref{1case} gives
\begin{align*}
\VW_{(\alpha\_0)^1}(t) &=\ t^{-\frac12\(2d+\chi(\alpha\_0(n))-1\)}t^{\frac12(\chi(\alpha\_0(n))-1)}[\chi(\alpha_0(n))]\_t\frac{\chi\_{-t}(\Hilb^dS)}{[\chi(\alpha_0(n))]\_t} \\ &=\ t^{-d}\chi\_{-t}(\Hilb^dS), \qquad d=1-\frac12\chi\_S(\alpha_0,\alpha_0).\qedhere
\end{align*}
\end{proof}

So \eqref{refmc} turns out to be the correct refinement of the more familiar multiple cover formula
$$
\VW_{(\alpha\_0)^r}\=\frac1{r^2}\VW_{(\alpha\_0)^1}.
$$
For these numerical invariants it is known (by \cite[Theorem 6.25]{TT2}, which builds on results of \cite{MT1, KKV, To}) that
$$
\VW_{(\alpha\_0)^1}\=e(\Hilb^dS), \qquad d=1-\frac12\chi\_S(\alpha_0,\alpha_0),
$$
even when $\alpha_0$ is not primitive. That is, the contribution of $T$-fixed semistable sheaves \emph{scheme theoretically supported on $S$} to the Vafa-Witten invariants is what we get for primitive $\alpha_0$.

It seems natural to conjecture the same for the refined invariants.
Summing over all uniform multiple covers using \eqref{unifor},
\beq{NiS}
P^\perp_{\alpha,n}\=(-1)^{\chi(\alpha(n))-1}\sum_{r|\alpha}
\frac{t^{-\frac12r\vd}\chi^{\vir}_{-t^r}\(\cP^S_{\frac\alpha r,n}\)}{[r]\_t}\,,
\eeq
then substituting in \eqref{refmc} gives the following.

\begin{conj}\label{K3conj}
If $\cO_S(1)$ is generic in the sense of \cite[Equation 2.4]{TT2} then
\beq{newno}
\VW_{\alpha}(t)\=\sum_{r|\alpha}\frac{t^{\chi\_S(\alpha,\alpha)/2r-r}\chi\_{-t^r}(\Hilb^{1-\chi\_S(\alpha,\alpha)/2r^2\!}S)}{[r]_t^2}\,.
\eeq
\end{conj}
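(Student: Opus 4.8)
\medskip
\noindent\textbf{Towards Conjecture~\ref{K3conj}.}
The plan is to read off $\VW_\alpha(t)$ from the uniform--contribution decomposition \eqref{NiS}, feeding in the evaluation of $\VW_{(\alpha_0)^1}(t)$ in the generality in which it is available. Fix $\cO_S(1)$ generic for $\alpha$. First I would invoke Theorem~\ref{K3cosec}: since $S$ is a K3 surface, every \emph{non}-uniform component of $(\cP^\perp_{\alpha,n})^T$ carries a nowhere-vanishing cosection of its fixed obstruction theory, so contributes $0$ to the $K$-theoretic localisation, leaving only the uniform strata, i.e.\ the sum \eqref{unifor}. Each uniform stratum of thickening $r$ (indexed by $r\mid\alpha$, which on a K3 is honest divisibility of Chern characters since $K_S=\cO_S$ makes thickening by $\cO_{rS}$ a genuine scaling $\alpha_0\mapsto r\alpha_0$) is scheme-theoretically $\cP^S_{\alpha/r,n}$ by the deformation-theoretic identification \eqref{fixt}, with contribution computed in \eqref{ansa1}. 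Substituting this into the $p_g(S)>0$ shape \eqref{desk2} of Conjecture~\ref{pechconj} --- legitimate here because Theorem~\ref{ball} shows the uniform contributions satisfy \ref{pechconj} --- produces, term by term, the multiple-cover expression $\VW_{(\alpha/r)^1}(t^r)/[r]^2_t$ of \eqref{refmc}. Summing over $r\mid\alpha$ and using $\chi_S(\alpha/r,\alpha/r)=\chi_S(\alpha,\alpha)/r^2$ (again by the $K_S=\cO_S$ scaling) to rewrite the exponent as $\chi_S(\alpha,\alpha)/2r-r$ and the Hilbert-scheme superscript as $1-\chi_S(\alpha,\alpha)/2r^2$ then yields the asserted formula, \emph{provided} one knows $\VW_{(\alpha_0)^1}(t)=t^{-d}\chi_{-t}(\Hilb^dS)$ with $d=1-\frac12\chi_S(\alpha_0,\alpha_0)$ for every $\alpha_0=\alpha/r$.

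For $\alpha_0$ primitive this last input is precisely the final clause of Theorem~\ref{ball}: $\cP^S_{\alpha_0,n}$ is a $\PP^{\chi(\alpha_0(n))-1}$-bundle over the smooth holomorphic-symplectic moduli space $\cM_S(\alpha_0)$, which is deformation-equivalent to $\Hilb^dS$, so pushing $\Lambda\udot(\Omega_{\cP^S}\otimes\t)$ down to $\cM_S$ and taking $\chi_t$, combined with deformation invariance of the $\chi_{-t}$-genus, gives $\VW_{(\alpha_0)^1}(t)=t^{-d}\chi_{-t}(\Hilb^dS)$ (one checks this is $t\leftrightarrow t^{-1}$-symmetric and specialises at $t=1$ to $e(\Hilb^dS)$, consistent with Proposition~\ref{tt-1}). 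When $\alpha$ has \emph{prime} rank, every $r\mid\alpha$ with $r>1$ forces $\alpha/r$ to have rank one, hence primitive, so the only charge in the sum that can fail to be primitive is $\alpha$ itself in the $r=1$ term; by primality such an $\alpha$ is then a prime multiple of a rank-one (automatically primitive) class. Thus the argument of the first paragraph already proves the prime-rank assertion \eqref{conge} (Theorem~\ref{GKthm}) apart from this single stratum $\cP^S_{\alpha,n}$ with $\alpha$ non-primitive, which I would handle separately.

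That stratum --- and more generally any $\alpha_0=\alpha/r$ that is non-primitive --- is the main obstacle: $\cM_S(\alpha_0)$ is then singular, $\cP^S_{\alpha_0,n}$ is not a projective bundle over it, and one must compute the virtual $\chi_y$-genus of the moduli of Joyce-Song \emph{pairs} directly. The target $\VW_{(\alpha_0)^1}(t)=t^{-d}\chi_{-t}(\Hilb^dS)$ is the $\chi_{-t}$-refinement of the numerical identity $\VW_{(\alpha_0)^1}=e(\Hilb^dS)$ of \cite{MT1,KKV,To}, and I would try to lift that proof: run the Joyce-style wall-crossing in the auxiliary categories $\cB_{p_{\alpha_0}}$ used in the proof of Theorem~\ref{VW=J}, but now tracking the Hodge-Deligne (equivalently $\chi_{-t}$) refinement of the virtually-indecomposable stack function $\epsilon_{\alpha_0}$ rather than only its Euler characteristic --- this is unobstructed here precisely because all relevant stacks are $(-1)$-shifted cotangent bundles of smooth stacks, so no orientation-data subtlety intervenes --- and then match $\chi_{-t}(\epsilon_{\alpha_0})$ against $\chi_{-t}(\Hilb^dS)$ via the $\Hilb$-geometry of the K3 as in \cite{To}. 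The hardest points are (a) showing the refined $\epsilon_{\alpha_0}$ still has a finite limit at $t=1$ and behaves multiplicatively under the stratification exactly as its numerical shadow does, and (b) in the prime-multiple case, matching the scheme structure on $\cP^S_{\alpha_0,n}$ with a symmetric-product-type model over $\Hilb^{\bullet}S$ --- the same scheme-theoretic issue already flagged around \eqref{fixt}. Once this non-primitive evaluation is secured for all $\alpha_0=\alpha/r$, the general conjecture follows by the assembly of the first paragraph.
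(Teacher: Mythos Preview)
Your outline matches the paper's own derivation essentially step for step. The paper reduces to uniform components via Theorem~\ref{K3cosec}, computes each uniform stratum by \eqref{ansa1}, assembles them into \eqref{NiS}, and then observes that substituting the multiple-cover formula \eqref{refmc} together with the primitive evaluation $\VW_{(\alpha_0)^1}(t)=t^{-d}\chi_{-t}(\Hilb^dS)$ \emph{for every} $\alpha_0=\alpha/r$ would give the asserted formula. You have reproduced exactly this reduction and correctly isolated the one genuine obstruction: the evaluation of $\VW_{(\alpha_0)^1}(t)$ when $\alpha_0$ is not primitive.

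Note, however, that the statement is a \emph{conjecture}; the paper does not prove it in general. The paper only establishes the prime case (Theorem~\ref{K3proof}), and there the argument is nothing more than ``by \eqref{NiS} and Theorem~\ref{ball}''. Your proposed attack on the non-primitive strata --- lifting the numerical identity $\VW_{(\alpha_0)^1}=e(\Hilb^dS)$ of \cite{MT1,KKV,To} to its $\chi_{-t}$-refinement via a Hall-algebraic wall-crossing tracking Hodge--Deligne polynomials --- is a reasonable programme, but it goes well beyond anything the paper attempts; the paper simply records this as the outstanding input and leaves the general statement conjectural. So your proposal is not a proof of the conjecture (nor does one exist in the paper), but rather a correct identification of the reduction plus a plausible, as-yet-unexecuted strategy for the missing piece.
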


When $\alpha=r\alpha_0$ with $r$ prime and $\alpha_0$ primitive, this becomes
\begin{multline*}
\VW_{r\alpha}(t)\=t^{r^2\chi\_S(\alpha,\alpha)/2-1}\chi\_{-t}(\Hilb^{1-r^2\chi\_S(\alpha,\alpha)/2\!}S) \\ +\frac{t^{r(\chi\_S(\alpha,\alpha)/2-1)}\chi\_{-t^r}(\Hilb^{1-\chi\_S(\alpha,\alpha)/2\!}S)}{[r]_t^2}
\end{multline*}
and was already conjectured by G\"ottsche-Kool \cite{GK3}. In the published version of this paper we claimed to prove this G\"ottsche-Kool Conjecture by combining \eqref{NiS} and Theorem \ref{ball}, but Martijn Kool pointed this only handles the last term above. However, the more general Conjecture \ref{K3conj} has now been proved in \cite{KK3}.
 
\begin{thm}\label{K3proof} Conjecture \ref{K3conj} is true \cite{KK3}.$\hfill\square$
\end{thm}

%\begin{thm}\label{K3proof} 
%If $\cO_S(1)$ is generic, $\alpha$ is primitive and $r$ is prime then Conjecture \ref{K3conj} is true:
%\begin{multline*}
%\VW_{r\alpha}(t)\=t^{r^2\chi\_S(\alpha,\alpha)/2-1}\chi\_{-t}(\Hilb^{1-r^2\chi\_S(\alpha,\alpha)/2\!}S) \\ +\frac{t^{r(\chi\_S(\alpha,\alpha)/2-1)}\chi\_{-t^r}(\Hilb^{1-\chi\_S(\alpha,\alpha)/2\!}S)}{[r]_t^2}\,.\quad\square\hspace{-3.5mm}
%\end{multline*}
%\end{thm}

Taking $\alpha=(r,0,k)$ in \eqref{newno} gives
$$
\sum_k\VW_{r,k}(t)q^k\ =\ \sum_{d|r}\frac1{[d]_t^2}\sum_{m\in\Z}
t^{-r(m-r/d)-d}\chi\_{-t^d}\Big(\Hilb^{\frac rd\big(m-\frac rd\big)+1}S\Big)q^{md},
$$
where on the right we have summed over those second Chern classes $k=md$ divisible by $d$. Shifting $m$ by the integer $r/d$ simplifies this to
\beq{temp}
\sum_{d|r}\frac1{[d]_t^2}\sum_{m\in\Z}
t^{-mr-d}\chi\_{-t^d}\Big(\!\Hilb^{\frac rdm+1}S\Big)q^{md+r}.
\eeq
To sum this we write
$$
\sum_{k=-1}^\infty t^{-k-1}\chi\_{-t}(\Hilb^{k+1}S)\;q^k\ =\ \widetilde\Delta(q,t)^{-1}.
$$
By the results of G\"ottsche-Soergel \cite{GS}, it is the unique Jacobi cusp form of index 10 and weight 1,
\beq{deltadef}
\widetilde\Delta(q,t)\ :=\ q\prod_{k=1}^\infty(1-q^k)^{20}(1-tq^k)^2(1-t^{-1}q^k)^2.
\eeq
It specialises at $t=1$ to the modular form $\eta(q)^{24}$.

Taking only powers $k=\frac rdm$ of $q$ divisible by the integer $r/d$ on both sides of this formula, and substituting $t^d$ for $t$, gives
$$
\sum_mt^{-mr-d}\chi\_{-t^d}(\Hilb^{\frac rdm+1}S)q^{\frac rdm}\=\frac dr\sum_{j=0}^{r/d-1}\widetilde\Delta(e^{2dj\pi i/r}q,t^d)^{-1}.
$$
Substituting in \eqref{temp} gives

\begin{cor} If $\cO_S(1)$ is generic then
$$
\sum_n\VW_{r,n}(t)\;q^n\=\sum_{d|r}\frac1{[d]_t^2}\frac dr\,q^r\!\sum_{j=0}^{r/d-1}\widetilde\Delta\(e^{2dj\pi i/r}q^{\frac{d^2}r},t^d\)^{-1},
$$
where $\widetilde\Delta$ is given by \eqref{deltadef}. 
%\begin{multline*}
%\frac1{[r]_t^2}\prod_{n=1}^\infty
%\frac1{(1-q^{nr})^{20}(1-q^{nr}t^r)^2(1-q^{nr}t^{-r})^2} \\
%+\frac{q^r}r\sum_{j=0}^{r-1}(e^{2j\pi i/r}q)^{-\frac1r}\prod_{n=1}^\infty
%\frac1{(1-\omega_r^{jn/r}q^{n/r})^{20}(1-\omega_r^{jn/r}q^{n/r}t)^2(1-\omega_r^{jn/r}q^{n/r}t^{-1})^2}\,.
%\end{multline*}
\end{cor}

When $r$ is prime this simplifies to
$$
\sum_n\VW_{r,n}(t)\;q^n\=\frac1{[r]_t^2}\,q^r\,\widetilde\Delta\(q^r,t^r\)^{-1}+\frac1rq^r\sum_{j=0}^{r-1}\widetilde\Delta\(e^{2j\pi i/r}q^{\frac1r},t\)^{-1},
$$
proving a conjecture of G\"ottsche-Kool \cite{GK3}.

\subsection{\for{toc}{$K_S>0$}\except{toc}{\bf $\mathbf{K_S>0}$}}
Finally we describe the refinement of a simple calculation on general type surfaces $S$ from \cite[Section 6.3]{TT2}. For more recent and much more general results we refer to the discussion of Laarakker's results \cite{La2} in the Introduction.

Take a surface $S$ with $h^{0,1}(S)=0$ and $h^{0,2}(S)>0$ and charge $\alpha=(2,0,0)\in H^*(S)$. The $\C^*$-fixed semistable sheaves on $X=K_S$ are
\begin{enumerate}
\item[(a)] (the pushforward from $S$ of) $\cO_S^{\oplus2}$, and
\item[(b)] (the pushforward from $2S\subset X$ of) $I_{C\subset 2S}\otimes K_S$, for any $C\subset S\subset2S\subset X$ in the canonical linear system $|K_S|$.
\end{enumerate}

Taking sections twisted by $\cO(n)$ for $n\gg0$ (using in (b) the fact that  the pushdown to $S$ of $I_{C\subset 2S}\otimes K_S$ is $\cO_S\oplus\cO_S\t^{-1}$), imposing stability and dividing by the automorphism group of the sheaf, we find the following. The moduli space of $\C^*$-fixed stable Joyce-Song pairs has two components,
\begin{enumerate}
\item[(a)] $\Gr(2,\Gamma(\cO_S(n)))$ of pairs with underlying sheaf $\cO_S^{\oplus2}$, and
\item[(b)] $\PP(\Gamma(\cO_S(n)))\times\PP(\Gamma(K_S))$ of pairs with underlying sheaf $I_{C\subset 2S}K_S$.
\end{enumerate}
The first component has a trivial $H^2(\cO_S)$ piece in the obstruction bundle $\ext^2_{p\_X}(I\udot,I\udot)\_\perp$, so it contributes nothing to the invariants. The second component is shown in \cite[Section 6.3]{TT2} to have fixed obstruction bundle
\beq{obbdl}
\mathrm{Ob}\ \cong\ T^*_{\PP(\Gamma(K_S))}
\eeq
pulled back from $\PP(\Gamma(K_S))$,
and virtual normal bundle
\begin{multline}\label{envir}
N^{\vir}\=
T_{\PP(\Gamma(\cO_S(n)))}\t^{-1}\ \oplus\ 
T_{\PP(\Gamma(K_S))}\t\ \oplus\ \Gamma(K_S)\!\otimes\!\cO_{\PP(\Gamma(K_S))}(1)\t^2 \\
-\ T^*_{\PP(\Gamma(\cO_S(n)))\times\PP(H^0(K_S))}\t\ -\ 
T^*_{\PP(\Gamma(\cO_S(n)))}\t^2\ -\ \Gamma(K_S)^*\otimes\cO_{\PP(\Gamma(K_S))}(1)\t^{-1}.
\end{multline}
A generic element of $\End_0\Gamma(K_S)\cong H^0(T_{\PP(\Gamma(K_S))})$ with distinct eigenvalues gives a vector field on $\PP(\Gamma(K_S))$ with $p_g(S)$ distinct zeros, and so a vector field on $\PP(\Gamma(\cO_S(n)))\times\PP(\Gamma(K_S))$ whose zero locus is $p_g(S)$ distinct $\PP(\Gamma(\cO_S(n)))$ fibres. By \eqref{obbdl} the corresponding Koszul resolution gives an equality in $K$-theory
$$
\cO^{\vir}\=\Lambda\udot\;\mathrm{Ob}^*\=(-1)^{p_g(S)-1}p_g(S)\cO_{\PP(\Gamma(\cO_S(n)))},
$$
so that the refined pairs invariant is
\beq{compoot}
P^\perp_\alpha(n,t)\=(-1)^{p_g(S)-1}p_g(S)\cdot\chi\_t\!\left(\PP(\Gamma(\cO_S(n))),\frac{K_{\vir}^{1/2}}{\Lambda\udot(N^{\vir})^\vee}\right).
\eeq
On a $\PP(\Gamma(\cO_S(n)))$ fibre the virtual normal bundle \eqref{envir} simplifies to
$$
N^{\vir}\=\Psi-\Psi^\vee
\ \oplus\ 
(\t^2)^{\oplus p_g(S)} \\
-\ T^*_{\PP(\Gamma(\cO_S(n)))}\t^2\ -\ (\t^{-1})^{\oplus p_g(S)},
$$
where $\Psi:=T_{\PP(\Gamma(\cO_S(n)))}\t^{-1}$.
Combined with \eqref{obbdl} this means $K_{\vir}$ is
$$
K_{\PP(\Gamma(\cO_S(n)))}\otimes(\det\Psi)^{-2}\t^{-2p_g(S)}\otimes K_{\PP(\Gamma(\cO_S(n)))}\t^{2\chi(\cO_S(n))-2}\t^{-p_g(S)}
$$
with square root
$$
K_{\vir}^{\frac12}\=K_{\PP(\Gamma(\cO_S(n)))}\otimes(\det\Psi)^{-1}\t^{-3p_g(S)/2}\t^{\chi(\cO_S(n))-1}.
$$
So we now calculate \eqref{compoot} to be
\begin{multline*}
(-1)^{p_g(S)-1}p_g(S)t^{\chi(\cO_S(n))-1-3p_g(S)/2}\frac{(1-t)^{p_g(S)}}{(1-t^{-2})^{p_g(S)}}\times \\
\chi\_t\!\left(\frac{\Lambda\udot\Psi\otimes
\Lambda\udot\;(T_{\PP(\cO_S(n))}\t^{-2})K_{\PP(\Gamma(\cO_S(n)))}}{\det\Psi\otimes\Lambda\udot\Psi^\vee}\right)\!.
\end{multline*}
By two applications of the identity \eqref{indenti} (and recalling that $\rk\Psi=\chi(\cO_S(n))-1$) this gives
\begin{multline*}
-p_g(S)\frac{t^{\chi(\cO_S(n))-1+p_g(S)/2}}{(1+t)^{p_g(S)}}\chi\_t
(\Lambda\udot\;(\Omega_{\PP(\Gamma(\cO_S(n)))}\t^2)t^{-2\chi(\cO_S(n))+2}
\\ \=-\frac{p_g(S)}{[2]_t^{p_g(S)}}\big[\chi(\cO_S(n))\big]_{t^2}\,.
\end{multline*}
Since $[\chi]\_{t^2}=[2\chi]\_t/[2]\_t$ this gives
$$
P^\perp_\alpha(n,t)\=-p_g(S)\frac{[2\chi(\cO_S(n)]\_t}{[2]_t^{p_g(S)+1}}\,.
$$
This fits Conjecture \eqref{pechconj} perfectly and makes the refined Vafa-Witten invariant 
\beq{end}
\VW_\alpha(t)\ =\ \frac{p_g(S)}{[2]_t^{p_g(S)+1}}\,.
\eeq

\bibliographystyle{halphanum}
\bibliography{references}

\bigskip \noindent {\tt{richard.thomas@imperial.ac.uk}} \medskip

\noindent Department of Mathematics \\
\noindent Imperial College London\\
\noindent London SW7 2AZ \\
\noindent United Kingdom

\end{document}